\numberwithin{equation}{section}
\newtheorem{theorem}{Theorem}
\newtheorem{corollary}{Corollary}
\newtheorem{lemma}{Lemma}
\newtheorem{proposition}{Proposition}
\newtheorem{definition}{Definition}
\newcommand{\K}{\mathcal{K}}
\newcommand{\N}{\mathbb{N}}
\newcommand{\R}{\mathbb{R}}
\newcommand{\C}{\mathbb{C}}
\newcommand{\Cu}{\mathrm{Cu}}
\newcommand{\CCu}{\mathbf{Cu}}
\newcommand{\id}{\mathrm{id}}
\newcommand{\Ad}{\mathrm{Ad}}
\newcommand{\Id}{\mathrm{Id}}
\newcommand{\M}{\mathrm{M}}
\newcommand{\rank}{\mathrm{rank}}
\newcommand{\Rank}{\mathrm{Rank}}
\newcommand{\diag}{\mathrm{diag}}
\title{A classification of inductive limits of splitting interval algebras}
\author{Luis Santiago}
\address{Departament de Matem\`atiques, Edifici C,
              Universitat Aut\`onoma de Barcelona, Bellaterra, Barcelona 08193, Spain.}
\email{santiago@mat.uab.cat}
\begin{document}

\begin{abstract}
It is shown that the Cuntz semigroup is a complete invariant for the C*-algebras that can be realized as an inductive limit of a sequence of finite direct sums of algebras of the form
\begin{align*}
\left\{f\in \M_m(\mathrm{C}[0,1]):f(0)\in \bigoplus^r_{i=1} \M_{p_i}(\C), f(1)\in \bigoplus^s_{i=1} \M_{q_i}(\C)\right\},
\end{align*}
where $p_1,p_2,\cdots, p_r$, and $q_1,q_2,\ldots, q_s$ are positive integers such that $\sum_{i=1}^rp_i=\sum_{i=1}^sq_i=m$.
\end{abstract}
\maketitle

\section{Introduction}
The results of this paper are a contribution to the classification program of C*-algebras.  The work on this program has been mostly concentrated on the classiﬁcation of simple C*-algebras, whereas the classification in the nonsimple case remains an emergent subject (compared to the body of work in the simple case). In recent years a new invariant have been successfully used to classify non-simple C*-algebras: the Cuntz semigroup (see \cite{Ciuperca-Elliott}, \cite{Ciuperca-Elliott-Santiago}). This semigroup is an analogue for positive elements of the semigroup of Murray-von Neumann equivalence classes of projections. Notably, the Cuntz semigroup contains a large amount of information about a given C*-algebra, its ideals and quotients. This makes the Cuntz semigroup suitable not only for the classification of simple C*-algebras but also for the classification of nonsimple ones.

In this paper we classify C*-algebras---no necessarily simple---that can be expressed as the inductive limit of a sequence of finite direct sums of splitting interval algebras (cf. Definition \ref{division}). The invariant used in the classification is the Cuntz semigroup. This result extends previous work of X. Jiang and H. Su where unital simple inductive limits of splitting interval algebras were classified using the Elliott invariant (see \cite{Jian-Su-Spli}). Also, the class of C*-algebras considered includes properly the class of AI C*-algebras, but goes beyond this. AI C*-algebras were classified by G. Elliott and A. Ciuperca in \cite{Ciuperca-Elliott}. The results of this paper can be considered as an extension of their classification result. However, the techniques used here differ from those developed by A. Ciuperca and  G. Elliott. Their result is based on a previous classification theorem of K. Thomsen in which a different invariant is used, while the classification theorems obtained in this paper are based on constructing an approximate intertwining of inductive limits of Cuntz semigroups. This notion of an approximate intertwining of two sequences of Cuntz semigroups is applicable in full generality (i.e., with no restriction on the C*-algebras under consideration). Therefore, it could be expected that it will be used to classify more general C*-algebras. (Possibly, even, certain simple C*-algebras.)

The main result of this paper states that the Cuntz semigroup functor classifies the $\ast$-homomorphisms between C*-algebras that are (stably isomorphic to) inductive limits of finite direct sums of splitting interval algebras:

\begin{theorem}\label{homomorphism}
Let $A$ and $B$ be C*-algebras that are (stably isomorphic to) inductive limits of finite direct sums of splitting interval algebras. Let $s_A$ and $s_B$ be strictly positive elements of $A$ and $B$, respectively. Suppose that there is a Cuntz semigroup morphism $\alpha\colon A\to B$ such that $\alpha [s_A]\le [s_B]$. It follows that there exists a $\ast$-homomorphism $\phi\colon A\to B$, unique up to approximate unitary equivalence, such that $\Cu(\phi)=\alpha$.
\end{theorem}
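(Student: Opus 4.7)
The plan is to prove Theorem~\ref{homomorphism} via an approximate intertwining argument at the level of both C*-algebras and Cuntz semigroups. Write $A=\varinjlim(A_n,\phi_{n,m})$ and $B=\varinjlim(B_n,\psi_{n,m})$ with each $A_n$, $B_n$ a finite direct sum of splitting interval algebras. Continuity of the $\Cu$-functor in the category $\CCu$ gives $\Cu(A)=\varinjlim\Cu(A_n)$ and $\Cu(B)=\varinjlim\Cu(B_n)$. Using compactness of generators of $\Cu(A_n)$ and the hypothesis $\alpha[s_A]\le[s_B]$ (which ensures the image can be fit inside $B$ rather than a matrix amplification), I would factor the composite $\Cu(A_n)\to\Cu(A)\xrightarrow{\alpha}\Cu(B)$ approximately through some $\Cu(B_{k_n})$, producing morphisms $\alpha_n\colon\Cu(A_n)\to\Cu(B_{k_n})$ which are compatible with the connecting maps to within a prescribed error. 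This is precisely the notion of approximate intertwining at the level of Cuntz semigroups that the introduction advertises.

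The core technical step is a building-block \emph{existence} theorem: given such an $\alpha_n$ with $\alpha_n[s_{A_n}]\le[s_{B_{k_n}}]$, produce a $*$-homomorphism $\phi_n\colon A_n\to B_{k_n}$ with $\Cu(\phi_n)$ arbitrarily close to $\alpha_n$. I would reduce to the case where $A_n$ and each summand of $B_{k_n}$ is a single splitting interval algebra, then exploit the concrete description of $\Cu$ of such an algebra as lower semicontinuous $\N$-valued multiplicity functions on $[0,1]$ that respect the prescribed splitting at the endpoints $0$ and $1$. A morphism between two such Cuntz semigroups encodes, for each source interval summand, a family of continuous eigenvalue/multiplicity functions taking values in the target intervals and subject to compatibility with the boundary fibers. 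I would realize these data by an explicit path construction inside matrix algebras, taking care that the splitting conditions at $0$ and $1$ on the $B$-side are matched.

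In parallel, I would establish the building-block \emph{uniqueness} statement: two $*$-homomorphisms between splitting interval algebras with the same Cuntz semigroup map are approximately unitarily equivalent, with quantitative control. Since the Cuntz invariant recovers the spectral multiplicity functions up to arbitrary precision, one gets an intertwining unitary via continuous functional calculus, adapted to accommodate the boundary fiber structure. Feeding existence and uniqueness into Elliott's standard approximate intertwining scheme, I would inductively construct $\phi_n\colon A_n\to B_{k_n}$ and unitaries $u_n\in B_{k_{n+1}}$ with
\begin{equation*}
\bigl\|\Ad(u_n)\circ\psi_{k_n,k_{n+1}}\circ\phi_n-\phi_{n+1}\circ\phi_{n,n+1}\bigr\|_{F_n}<2^{-n},
\end{equation*}
on a fixed exhausting sequence of finite sets $F_n\subset A_n$. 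Passing to the limit yields $\phi\colon A\to B$ with $\Cu(\phi)=\alpha$, and the same uniqueness statement, applied at each finite stage to two candidate liftings, gives approximate unitary equivalence of any two such $\phi$.

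The main obstacle will be Step~2, the building-block existence theorem. Encoding the endpoint splittings into the Cuntz semigroup morphism framework and then realizing an arbitrary such morphism by an honest $*$-homomorphism is a combinatorially delicate task: one must simultaneously match multiplicity functions along the open interval, fit them to the prescribed matrix decompositions at the two endpoints on both source and target, and control the approximation error uniformly enough that the intertwining in Step~4 can be closed up. Everything downstream — uniqueness of $\phi$, functoriality, and the passage to stable isomorphism in the statement — follows from this technical heart by standard arguments.
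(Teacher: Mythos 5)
Your proposal is a classical two-sided Elliott approximate intertwining: write both $A$ and $B$ as inductive limits of building blocks, approximately factor $\alpha$ through the building blocks of $B$, realize the resulting finite-stage data by $\ast$-homomorphisms $A_n\to B_{k_n}$, and close the diagram with unitaries. The paper takes a genuinely different and tighter route. It first reduces $A$ to a \emph{single} splitting interval algebra (using the finite-direct-sum and hereditary-subalgebra reductions from the Ciuperca--Elliott--Santiago paper), and then runs a one-sided argument: Theorem~\ref{Existence} produces a sequence of $\ast$-homomorphisms $\phi_n\colon A\to B$ with $(\alpha,\Cu(\phi_n))$ in smaller and smaller entourages $U_{F_n}$ of the uniform structure on $\mathrm{Mor}(\Cu(A),\Cu(B))$ constructed in Subsection~\ref{Uniform-structure}, and Theorem~\ref{Uniqueness} supplies unitaries $u_i$ for which $\Ad(u_{i-1}\cdots u_1)\circ\phi_{n_i}$ is Cauchy; the limit is the desired $\phi$. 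This reduction buys something concrete. First, it sidesteps the need to produce \emph{morphisms} $\alpha_n\colon\Cu(A_n)\to\Cu(B_{k_n})$ compatible with the connecting maps: such morphisms are, taken literally, too much to ask for and are not obviously available, whereas the paper's existence Lemmas~\ref{app-lif1} and \ref{app-lif2} require only a map defined on a fixed \emph{finite} subset $F_n\subset\Cu(A)$ subject to a finite list of inequalities, all of which can be pushed into a finite stage of $\Cu(B)$ using the explicit description of inductive limits in $\CCu$ (Proposition~\ref{inductive_limits}). Second, the quantification of ``closeness'' of Cuntz-semigroup morphisms, which your plan leaves implicit, is precisely the role of the uniform structure $\mathcal{U}_{\Cu(A),\Cu(B)}$ and the basis $(U_{F_n})$ of entourages (Proposition~\ref{basis}); without something of this kind, the Cauchy argument in the final step cannot be set up. You are right that the existence theorem for a single splitting interval algebra mapping into a finite direct sum of splitting interval algebras is the technical heart, and your description of it — matching eigenvalue/multiplicity functions on the interval and fitting them to the prescribed boundary decompositions — is exactly what Lemmas~\ref{app-lif1} and~\ref{app-lif2} do. But the organizational framework around that heart is cleaner in the paper, and I would encourage you to absorb the reduction-plus-uniform-structure mechanism rather than a full two-sided intertwining.
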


It follows from this theorem that the invariant consisting of the Cuntz semigroup together with a distinguished element of it classifies C*-algebras that are (stably isomorphic) to inductive limits of finite direct sums of splitting interval algebras: 

\begin{corollary}\label{isomorphism}
Let $A$ and $B$ be C*-algebras that are (stably isomorphic to) inductive limits of finite direct sums of splitting interval algebras. Let $s_A$ and $s_B$ be strictly positive elements of $A$ and $B$, respectively. Suppose that there is a Cuntz semigroup isomorphism $\alpha\colon A\to B$ such that $\alpha [s_A]= [s_B]$. It follows that there exists a $\ast$-isomorphism $\phi\colon A\to B$, unique up to approximate unitary equivalence, such that $\Cu(\phi)=\alpha$.
\end{corollary}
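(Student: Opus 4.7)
The plan is to deduce the corollary from Theorem \ref{homomorphism} by a standard two-sided Elliott approximate intertwining argument. The hypothesis gives a Cuntz semigroup isomorphism $\alpha$ with $\alpha[s_A]=[s_B]$, so both $\alpha$ and $\alpha^{-1}$ satisfy the premise of Theorem \ref{homomorphism} (the latter because $\alpha^{-1}[s_B]=[s_A]\le [s_A]$). I would therefore apply Theorem \ref{homomorphism} twice to obtain $\ast$-homomorphisms $\phi\colon A\to B$ and $\psi\colon B\to A$ with $\Cu(\phi)=\alpha$ and $\Cu(\psi)=\alpha^{-1}$.

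Next, I would compute $\Cu(\psi\circ\phi)=\alpha^{-1}\circ\alpha=\id_{\Cu(A)}=\Cu(\id_A)$, and note that both $\psi\circ\phi$ and $\id_A$ send $[s_A]$ to $[s_A]$. The uniqueness clause of Theorem \ref{homomorphism} then forces $\psi\circ\phi$ to be approximately unitarily equivalent to $\id_A$. By the symmetric argument, $\phi\circ\psi\sim_{au}\id_B$.

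At this point I would invoke Elliott's approximate intertwining lemma: given $\phi$, $\psi$ between separable C*-algebras with $\psi\phi\sim_{au}\id_A$ and $\phi\psi\sim_{au}\id_B$, one can inductively select unitaries in the (multiplier algebras of the) $\sigma$-unital algebras $A$ and $B$ so as to build a commutative diagram
\begin{align*}
\xymatrix{
A \ar[r]^{\id} \ar[d]_{\phi} & A \ar[r]^{\id} \ar[d]_{\phi} & A \ar[r]^{\id} \ar[d]_{\phi} & \cdots \ar[r] & A \\
B \ar[r]^{\id} \ar[ur]^{\psi_1} & B \ar[r]^{\id} \ar[ur]^{\psi_2} & B \ar[r]^{\id} \ar[ur]^{\psi_3} & \cdots \ar[r] & B
}
\end{align*}
that commutes up to a summable sequence of tolerances, where the $\psi_n$ are inner perturbations of $\psi$. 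Passing to the inductive limits (both of which are just $A$ and $B$ respectively with identity connecting maps) yields an isomorphism $\widetilde{\phi}\colon A\to B$ that is approximately unitarily equivalent to $\phi$; in particular $\Cu(\widetilde{\phi})=\Cu(\phi)=\alpha$. Uniqueness of $\widetilde{\phi}$ up to approximate unitary equivalence is inherited directly from the uniqueness part of Theorem \ref{homomorphism}.

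The potential obstacle is not conceptual but bookkeeping: one must ensure that the approximate unitary equivalences produced by Theorem \ref{homomorphism} are compatible with the strictly positive elements $s_A$, $s_B$ so that the intertwining can be carried out in the (non-unital) setting using unitaries in the minimal unitizations or multiplier algebras. This is routine given that $A$ and $B$ are $\sigma$-unital, but care is needed to phrase approximate unitary equivalence on countable dense subsets of $A$ and $B$ and to extract the limit $\ast$-homomorphism correctly.
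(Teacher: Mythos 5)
Your argument is exactly the standard two-sided Elliott approximate intertwining that the paper itself invokes (it simply refers the reader to the proof of Corollary 1 of Ciuperca--Elliott--Santiago, which proceeds in precisely this way). The reduction to Theorem \ref{homomorphism}, the verification that $\psi\circ\phi\sim_{au}\id_A$ and $\phi\circ\psi\sim_{au}\id_B$ via the uniqueness clause, and the passage to an honest isomorphism by intertwining are all as in the cited source, so the proposal is correct and takes the same route as the paper.
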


\section{Preliminary definitions and results}

\subsection{The Cuntz semigroup}\label{Cuntzsemigroup}
Let us recall the definition of the (stabilized) Cuntz semigroup of a C*-algebra in terms of the positive elements of the stabilization of the algebra. 

Let $A$ be a C*-algebra and let $a$ and $b$ be positive elements of $A$. Let us say that $a$ is Cuntz smaller than $b$, denoted by  $a\preccurlyeq b$ if there are elements $d_n\in A$, $n=1,2,\cdots$, such that $d_n^*bd_n\to a$. The elements $a$ and $b$ are called Cuntz equivalent, written $a\sim b$, if $a\preccurlyeq b$ and $b\preccurlyeq a$. It can be easily verified that $\preccurlyeq$ is a pre-order in the set of positive elements of $A$, and that $\sim$ is a equivalence relation.

Given a positive element $a\in A\otimes\K$ let us denote by $[a]$ the Cuntz equivalence class of $a$ ($\K$ denotes the algebra of compact operators on a separable Hilbert space). The Cuntz semigroup of $A$, denoted by $\Cu(A)$, is defined as the set of equivalence classes of positive elements of $A\otimes \K$ endowed with the order derived from the pre-order relation $\preccurlyeq$ (so that $[a]\le [b]$ if $a\preccurlyeq b$), and the addition operation
\begin{align*}
[a] + [b] := [a'+b'],
\end{align*}
where $a'$ and $b'$ are mutually orthogonal and Murray-von Neumann equivalent to $a$ and $b$, respectively (two positive elements $a, b\in A$ are said to be Murray-von Neumann equivalent, if there exists $x\in A$ such that $a=x^*x$ and $xx^*=b$). If $\phi\colon A\to B$ is a $\ast$-homomorphism from a C*-algebra $A$ to a C*-algebra $B$, then it induces an ordered semigroup morphism $\Cu(\phi)\colon \Cu(A)\to \Cu(B)$ defined on the Cuntz equivalence class of a positive element $a\in A\otimes \K$ by $\Cu(\phi)[a]=[(\phi\otimes\id)(a)]$, where $\Id\colon \K\to\K$ denotes the identity operator on $\K$.

It was shown in \cite{Coward-Elliott-Ivanescu} that $\Cu(\cdot)$ is a functor from the category of C*-algebras to certain category of ordered semigroups denoted by $\CCu$. An ordered semigroup $S$ is an object in the category $\CCu$ if it has a zero element and it satisfies that:

(i) every increasing sequence in $S$ has a supremum;

(ii) for every element $x\in S$ there is a sequence $(x_n)_{n\in\N}$ in $S$ with supremum $x$, and such that
\[
x_1\ll x_2\ll x_3\ll\cdots.
\]
Where $x\ll y$, if whenever $y\leq \sup y_n$ for an increasing sequence $(y_n)_{n\in \N}$, then eventually $x\leq y_n$;

(iii) the operation of passing to the supremum of an increasing sequence and the relation $\ll$ are
compatible with addition.

The maps in the category $\CCu$ are ordered semigroup maps preserving the zero element, suprema of
increasing sequences, and the relation $\ll$. In Theorem 2 of \cite{Coward-Elliott-Ivanescu} it is proved that sequential inductive limits exist in the category $\CCu$, and that the functor $\Cu$ preserves inductive limits of sequences. The following lemma follows by the construction of inductive limits given in the proof of this theorem:

\begin{proposition}\label{inductive_limits}
Let $(S_i,\alpha_{i,j})_{i,j\in \N}$, $\alpha_{i,j}\colon S_i\to S_j$, be an inductive system in the category $\CCu$. Then $(S,(\alpha_{i,\infty})_{i\in\N})$, $\alpha_{i,\infty}\colon S_i\to S$, is the inductive limit of this system if and only if:
\begin{itemize}
\item[(i)] for any $x\in S$ there are elements $x_i\in S_i$, $i=1,2,\cdots$, such that $\alpha_{i,i+1}(x_i)\le x_{i+1}$, and $\sup\alpha_{i,\infty}(x_i)=x$,

\item[(ii)] for any $x,y\in S_i$ such that $\alpha_{i,\infty}(x)\leq \alpha_{i,\infty}(y)$ and $x'\ll x$
there is $j$ such that $\alpha_{i,j}(x')\leq \alpha_{i,j}(x)$.
\end{itemize}
\end{proposition}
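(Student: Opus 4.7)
The strategy is to compare $(S,(\alpha_{i,\infty}))$ with the explicit inductive limit $(S',(\beta_{i,\infty}))$ constructed in Theorem 2 of \cite{Coward-Elliott-Ivanescu}. The first step is to verify, by inspection of that construction, that $(S',(\beta_{i,\infty}))$ itself satisfies (i) and (ii). This takes care of the ``only if'' direction: since any two inductive limits are related by a unique $\CCu$-isomorphism intertwining the structure maps, and such an isomorphism preserves suprema of increasing sequences and the relation $\ll$, conditions (i) and (ii) transfer to $(S,(\alpha_{i,\infty}))$.

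For the ``if'' direction, assume $(S,(\alpha_{i,\infty}))$ satisfies (i) and (ii) and construct a $\CCu$-isomorphism $\Phi\colon S\to S'$ with $\Phi\circ\alpha_{i,\infty}=\beta_{i,\infty}$. Given $x\in S$, use (i) to choose a sequence $(x_i)$ with $\alpha_{i,i+1}(x_i)\le x_{i+1}$ and $x=\sup\alpha_{i,\infty}(x_i)$, and set $\Phi(x):=\sup_i\beta_{i,\infty}(x_i)$; this supremum exists because $(\beta_{i,\infty}(x_i))$ is increasing and $S'\in\CCu$. The main technical obstacle is well-definedness. Given a second sequence $(y_j)$ representing the same $x$, fix $i$ and pick $x_i''\ll x_i'\ll x_i$. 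Since $\alpha_{i,\infty}(x_i')\ll\alpha_{i,\infty}(x_i)\le\sup_j\alpha_{j,\infty}(y_j)$, there exists $j\ge i$ with $\alpha_{j,\infty}(\alpha_{i,j}(x_i'))\le\alpha_{j,\infty}(y_j)$. Condition (ii) applied in $S_j$ to this inequality, together with the subelement $\alpha_{i,j}(x_i'')\ll\alpha_{i,j}(x_i')$, produces $k\ge j$ with $\alpha_{i,k}(x_i'')\le\alpha_{j,k}(y_j)$. Applying $\beta_{k,\infty}$ gives $\beta_{i,\infty}(x_i'')\le\beta_{j,\infty}(y_j)\le\sup_j\beta_{j,\infty}(y_j)$. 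Taking suprema first over $x_i''\ll x_i$ and then over $i$ yields $\sup_i\beta_{i,\infty}(x_i)\le\sup_j\beta_{j,\infty}(y_j)$, and symmetry gives equality.

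Once well-definedness is settled, verifying that $\Phi$ preserves $0$, order, addition, the relation $\ll$, and suprema of increasing sequences, and that $\Phi\circ\alpha_{i,\infty}=\beta_{i,\infty}$, reduces in each case to the same two moves: approximate by $\ll$-subelements and push everything into a common $S_k$ using condition (ii). An inverse map $\Psi\colon S'\to S$ is defined by the mirror recipe with the roles of $\alpha$ and $\beta$ interchanged; this is legitimate because (i) and (ii) also hold in $(S',(\beta_{i,\infty}))$, which is precisely what was established in the first paragraph. The identities $\Psi\circ\Phi=\id_S$ and $\Phi\circ\Psi=\id_{S'}$ are then immediate from well-definedness on both sides, completing the proof.
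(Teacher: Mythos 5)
Your argument is correct and follows the same route the paper implicitly takes: the paper offers no proof of its own beyond the remark that the proposition ``follows by the construction of inductive limits'' in Theorem 2 of \cite{Coward-Elliott-Ivanescu}, and your proof likewise anchors everything on that explicit construction $(S',(\beta_{i,\infty}))$, verifying (i) and (ii) for it and then transferring back and forth. You supply the well-definedness argument and the cocone-isomorphism reasoning that the paper leaves entirely to the reader, so you are filling in the omitted details rather than taking a different path. Two small observations: the paper's printed condition (ii) reads $\alpha_{i,j}(x')\le\alpha_{i,j}(x)$, which is vacuous since $x'\ll x$; you correctly interpreted it as the intended $\alpha_{i,j}(x')\le\alpha_{i,j}(y)$. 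And for the later verifications you sketch (preservation of $\ll$, suprema, addition), a slightly cleaner route than ``the same two moves'' is to note that once $\Phi$ and $\Psi$ are mutually inverse order- and addition-preserving maps, preservation of $\ll$ and of suprema of increasing sequences is automatic, since both notions are defined purely in terms of the order.
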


Let $S$ be a semigroup in the category $\CCu$. Recall that an element $x\in S$ is called compact if $x\ll x$. The following lemma states well known facts about compact elements. Its proof is an immediate consequence of Proposition \ref{inductive_limits}, whence we omit it.

\begin{lemma}\label{compact_elements}
Let $S$ be the inductive limit in the category $\CCu$ of a sequence of semigroups $(S_i)_{i\in \N}$ with connecting morphisms $\alpha_{i,j}\colon S_i\to S_j$. The following statements hold:
\begin{itemize}
\item[(i)] If $x,y\in S_i$ are such that $\alpha_{i,\infty}(x)\le\alpha_{i,\infty}(y)$ and $x$ is compact, then there exists $j\ge i$ such that $\alpha_{i,j}(x)\le\alpha_{i,j}(y)$;  

\item[(ii)] If $x,y\in S_i$ are compact elements such that $\alpha_{i,\infty}(x)=\alpha_{i,\infty}(y)$, then there exists $j\ge i$ such that $\alpha_{i,j}(x)=\alpha_{i,j}(y)$;

\item[(iii)] If $x\in S$ is a compact element, then there exists $j\ge 1$ and a compact element $x'\in S_j$ such that $\alpha_{k,\infty}(x')=x$.

\end{itemize}
\end{lemma}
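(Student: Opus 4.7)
My plan is that all three parts follow directly from Proposition \ref{inductive_limits}. Throughout I will read the conclusion of Proposition \ref{inductive_limits}(ii) as $\alpha_{i,j}(x')\le \alpha_{i,j}(y)$ rather than $\alpha_{i,j}(x')\le \alpha_{i,j}(x)$; the latter reading would be vacuous, since $x'\ll x$ already entails $x'\le x$.

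For part (i), compactness of $x$ gives $x\ll x$, so Proposition \ref{inductive_limits}(ii) applied with $x'=x$ directly produces $j\ge i$ with $\alpha_{i,j}(x)\le \alpha_{i,j}(y)$. Part (ii) then follows by applying (i) twice, once to $\alpha_{i,\infty}(x)\le\alpha_{i,\infty}(y)$ and once to the reverse, and passing to the larger of the two resulting indices.

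The substantive work is in part (iii), and this is where I expect the main obstacle: I need a \emph{compact} preimage of $x$, not merely any preimage. I plan to proceed as follows. First, Proposition \ref{inductive_limits}(i) yields a sequence $(x_i)_{i\in\N}$ with $x_i\in S_i$, $\alpha_{i,i+1}(x_i)\le x_{i+1}$, and $x=\sup_i\alpha_{i,\infty}(x_i)$. Since $x\ll x$ and the image sequence is increasing, there exists $j$ with $\alpha_{j,\infty}(x_j)=x$. However, $x_j$ itself need not be compact, so I pass to a rapidly increasing sequence $y_1\ll y_2\ll\cdots$ in $S_j$ with supremum $x_j$; applying the same argument to $\alpha_{j,\infty}(y_n)\nearrow x$ produces $n_0$ such that $\alpha_{j,\infty}(y_n)=x$ for every $n\ge n_0$.

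The final step is to invoke Proposition \ref{inductive_limits}(ii) once more with $x:=y_{n_0+2}$, $y:=y_{n_0}$, and $x':=y_{n_0+1}$: the hypotheses $y_{n_0+1}\ll y_{n_0+2}$ and $\alpha_{j,\infty}(y_{n_0+2})\le\alpha_{j,\infty}(y_{n_0})$ (both sides equal $x$) are satisfied, producing $k\ge j$ with $\alpha_{j,k}(y_{n_0+1})\le\alpha_{j,k}(y_{n_0})$. Combined with the trivial reverse inequality, the two images agree in $S_k$. Since connecting maps in $\CCu$ preserve $\ll$, the relation $y_{n_0}\ll y_{n_0+1}$ descends to $\alpha_{j,k}(y_{n_0})\ll\alpha_{j,k}(y_{n_0+1})=\alpha_{j,k}(y_{n_0})$, so $z:=\alpha_{j,k}(y_{n_0})\in S_k$ is compact and satisfies $\alpha_{k,\infty}(z)=\alpha_{j,\infty}(y_{n_0})=x$. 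The key trick, and the only non-routine idea in the lemma, is this: compactness at a finite stage is manufactured by selecting two consecutive terms of a rapidly increasing sequence that already coincide at infinity and then using Proposition \ref{inductive_limits}(ii) to force that coincidence at a finite index.
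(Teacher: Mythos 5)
Your proof is correct, and since the paper omits the proof entirely (calling it an ``immediate consequence'' of Proposition~\ref{inductive_limits}), your argument fills in exactly what the author left unsaid, by the route the author indicates. Parts (i) and (ii) really are immediate once one fixes the evident typo in the conclusion of Proposition~\ref{inductive_limits}(ii) (which you correctly read as $\alpha_{i,j}(x')\le\alpha_{i,j}(y)$). Part (iii) is less automatic than the word ``immediate'' suggests, and your three-term device---choosing $y_{n_0}\ll y_{n_0+1}\ll y_{n_0+2}$ from a rapidly increasing sequence that has already stabilized at $x$ under $\alpha_{j,\infty}$, then applying Proposition~\ref{inductive_limits}(ii) to $(x',x,y)=(y_{n_0+1},y_{n_0+2},y_{n_0})$ to collapse $y_{n_0}$ and $y_{n_0+1}$ at a finite stage---is precisely the non-routine step needed to manufacture a compact preimage rather than just any preimage. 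The verification that $z=\alpha_{j,k}(y_{n_0})$ is compact (via $z\ll\alpha_{j,k}(y_{n_0+1})\le z$) and maps to $x$ is clean and complete.
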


\subsection{The Cuntz semigroup of splitting interval algebra}\label{Splitting}
The basic building blocks for the C*-algebras that are studied in this paper are certain C*-subalgebras of matrix algebras over the interval.

\begin{definition}\label{division}
A splitting interval algebra is a C*-algebra of the form:
\begin{align}\label{splitting}
\mathrm{S}_m[\overline{p},\overline{q},r,s]=\left\{f\in \M_m(\mathrm{C}[0,1]):f(0)\in \bigoplus^r_{i=1} \M_{p_i}(\C), f(1)\in \bigoplus^s_{i=1} \M_{q_i}(\C)\right\}
\end{align}
where $\overline{p}=(p_1,p_2,\cdots, p_r)$ and $\overline{q}=(q_1,q_2,\ldots, q_s)$ are tuples of positive integers such that $\sum_{i=1}^rp_i=\sum_{i=1}^sq_i=m$.
\end{definition}

Let us consider the splitting interval algebra $A=\mathrm{S}_m[\overline{p},\overline{q},r,s]$, and let us denote by $\mathrm{sp}(A)$ the spectrum of $A$. Then $\mathrm{sp}(A)$ is isomorphic to the set $\{0_1,\ldots, 0_r\}\cup (0,1)\cup \{1_1,\ldots, 1_s\}$ with the natural non-Hausdorff topology. That is, the topology generated by the sets $(t,1)\cup\{1_j\}$ and $\{0_i\}\cup (0,t)$, with $t\in (0,1)$, $1\le i\le r$, and $1\le j\le s$. The points in the spectrum are in one-to-one correspondence with the irreducible representations:
\begin{align}\label{irep}
\begin{aligned}
&\pi_{0_i}\colon \mathrm{S}_m[\overline{p},\overline{q},r,s]\to \M_{p_i}(\C),\quad \pi_{0_i}(f)=(f(0))_i,\\
&\pi_t\colon \mathrm{S}_m[\overline{p},\overline{q},r,s]\to \M_m(\C),\quad\pi_t(f)=f(t),\,t\in(0,1),\\
&\pi_{1_i}\colon \mathrm{S}_m[\overline{p},\overline{q},r,s]\to \M_{q_i}(\C),\quad \pi_{1_i}(f)=(f(1))_i.
\end{aligned}
\end{align}

Let $X$ be a topological space. Let us denote by $\mathrm{Lsc}(X, \N\cup\infty)$ the order semigroup of lower semicontinuous functions on $X$ with values in $\N\cup\infty$ (the order and addition are taking pointwise).

\begin{theorem}\label{Cuntzs_splitting}
Consider the splitting interval algebra $A=S_m[\overline{p},\overline{q},r,s]$. Then, the map
\begin{align}\label{cuntz_splitting}
\Rank\colon \Cu(A)\to \left\{f\in \mathrm{Lsc}(\mathrm{sp}(A), \N\cup \infty): 
\begin{array}{ll}
\lim_{t\to 0}f(t)\geq \sum_{i=1}^rf(0_i),\\
\lim_{t\to 1}f(t)\geq \sum_{i=1}^sf(1_i)
\end{array}
\right\},
\end{align}
given by $(\Rank[a])(t)=\rank((\pi_t\otimes \id_{\K})(a))$ is an isomorphism in the category $\CCu$.
\end{theorem}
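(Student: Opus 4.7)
The plan is to verify four things in turn: that $\Rank$ lands in the stated semigroup, that it is a morphism in $\CCu$, that it is injective, and that it is surjective.

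First I would check well-definedness. For positive $a\in A\otimes\K$, the function $t\mapsto\rank((\pi_t\otimes\id)(a))$ is lower semicontinuous on the locally compact space $(0,1)$ by the standard fact for matrix-valued continuous functions. The subtle points are the endpoint constraints. Since $f(0)=\bigoplus_{i=1}^r f(0)_i$, lower semicontinuity of rank in the ambient C*-algebra $\M_m(\mathrm{C}[0,1])\otimes\K$ gives $\liminf_{t\to 0^+}\rank(f(t))\ge \rank(f(0))=\sum_{i=1}^r\rank(f(0)_i)$, which is exactly the first endpoint inequality in \eqref{cuntz_splitting}; the inequality at $1$ is analogous. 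Consequently lower semicontinuity in the non-Hausdorff topology of $\mathrm{sp}(A)$ holds. Cuntz equivalence preserves ranks under any representation, so $\Rank$ descends to $\Cu(A)$. That $\Rank$ is a morphism in $\CCu$ then follows from well-known formal properties: additivity for orthogonal representatives, preservation of order and zero, preservation of suprema of increasing sequences (via the $(a-\epsilon)_+$ approximation together with pointwise convergence of rank functions from below), and preservation of the relation $\ll$, which on both sides is encoded by the $(a-\epsilon)_+$ construction and by compact inclusion of open supports.

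For injectivity, I would show that $\Rank[a]\le\Rank[b]$ implies $[a]\le[b]$ by proving that, for any $\epsilon>0$, $(a-\epsilon)_+$ can be Cuntz-dominated by $b$ inside $A\otimes\K$. Away from the endpoints this is the standard Cuntz comparison for matrix algebras over an interval, where local witnesses $d(t)\in\M_m(\C)\otimes\K$ exist pointwise and can be patched via a partition of unity. Near each $0_i$ the comparison can be performed inside the block $\M_{p_i}(\C)\otimes\K$ (where both $(\pi_{0_i}\otimes\id)(a)$ and $(\pi_{0_i}\otimes\id)(b)$ live), and the endpoint inequality in \eqref{cuntz_splitting} is exactly what allows these block witnesses to be assembled into an element $d\in A\otimes\K$ whose restriction to a small interval $[0,\delta)$ is continuous into $\bigoplus_i\M_{p_i}(\C)\otimes\K$ and then extended continuously across $t=0$; the same argument applies at $t=1$.

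For surjectivity, I would write a lower semicontinuous function $f$ with values in $\N\cup\infty$ on the target semigroup as a supremum of increasing sums $\sum_k \chi_{U_k}$ of characteristic functions of open subsets $U_k\subseteq\mathrm{sp}(A)$, and then realize each $\chi_U$ as the rank function of an explicit positive element. The open sets here are parametrized by an open $V\subseteq(0,1)$ together with subsets $I\subseteq\{1,\dots,r\}$ and $J\subseteq\{1,\dots,s\}$, subject to the condition that $V$ contains an interval $(0,\delta)$ whenever $I\ne\emptyset$ and $(1-\delta,1)$ whenever $J\ne\emptyset$; one builds the corresponding characteristic function using products of a scalar cut-off function of $t$ with a rank-one projection whose values at $0$ and $1$ lie in the correct blocks specified by $I$ and $J$. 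I expect the surjectivity step to be the main obstacle: the combinatorics of open sets in the non-Hausdorff spectrum must be matched by explicit elements of $A$, and verifying that arbitrary increasing sequences in the target semigroup can be built from such building blocks while respecting the endpoint constraint requires a careful bookkeeping of blocks at the two endpoints. Once both injectivity and surjectivity are established, the fact that $\Rank$ is a $\CCu$-morphism upgrades it to a $\CCu$-isomorphism.
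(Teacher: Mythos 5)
Your outline matches the paper's shape (well-definedness, order embedding, surjectivity, then upgrade to a $\CCu$-isomorphism), and your hands-on patching plan for the order-embedding step is a plausible alternative to the paper's one-line invocation of Corollary 3.4 of Robert--Tikuisis (applied to the Hilbert modules $al_2(A)$ and $bl_2(A)$), though it would take significantly more care near the endpoints where the block structure changes. The real problem is in the surjectivity step.

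You propose to decompose $f$ as $\sum_k\chi_{U_k}$ where the $U_k$ are general open sets parametrized by $(V,I,J)$ with $I\subseteq\{1,\dots,r\}$, $J\subseteq\{1,\dots,s\}$, and to realize each $\chi_{U}$ by a scalar cut-off times a rank-one projection. This fails as soon as $|I|\ge 2$: a rank-one projection in $A$ has total rank one at $t=0$, so its value there is nonzero in at most one of the blocks $\M_{p_1},\dots,\M_{p_r}$; and more fundamentally, $\chi_U$ with $|I|\ge 2$ does not even belong to the target semigroup, since $\lim_{t\to 0}\chi_U(t)=1<|I|=\sum_i\chi_U(0_i)$. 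The building blocks therefore have to be restricted to $|I|,|J|\le 1$, and decomposing an arbitrary $f$ in the target into a sum of such constrained characteristic functions is not automatic. The paper resolves this by first writing $f|_{(0,1)}=\sum_n\mathds{1}_{U_n}$ with a \emph{nested} family $U_n\supseteq U_{n+1}\subseteq(0,1)$ (via $U_n=f^{-1}((n,\infty])\cap(0,1)$), and then attaching to each $U_n$ a single endpoint $t_n\in\{0_1,\dots,0_r\}$ and a single $t'_n\in\{1_1,\dots,1_s\}$, with the sequences $(t_n)$ and $(t'_n)$ chosen so that each $0_i$ occurs exactly $f(0_i)$ times and each $1_j$ exactly $f(1_j)$ times. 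The nesting together with the constraint inequalities of the target make each $V_n=\{t_n\}\cup U_n\cup\{t'_n\}$ open with $\mathds{1}_{V_n}$ in the target, and each $\mathds{1}_{V_n}$ is realizable by a single rank-one projection times a cut-off. That multiplicity bookkeeping is the missing idea, and it is the crux of the argument. (A smaller point: you verify directly that $\Rank$ preserves suprema and $\ll$; the paper avoids this by noting that once $\Rank$ is an ordered-semigroup isomorphism, the target inherits the $\CCu$ structure from $\Cu(A)$, so the upgrade is automatic.)
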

\begin{proof}
The map $\Rank$ is clearly additive and order preserving.

Suppose that $a$ and $b$ are positive elements of $A\otimes \K$ such that $\Rank[a]\le \Rank[b]$. Then, it follows by Corollary 3.4 of \cite{Robert-Tikuisis} applied to the C*-algebra $A$ and to the Hilbert C*-modules $al _2(A)$ and $bl_2(A)$ that $a\preccurlyeq b$. Hence, the map $\Rank$ is an order embedding. Let us show that it is surjective. 

Let $f\in \mathrm{Lsc}(\mathrm{sp}(A), \N\cup \infty)$ be such that
\begin{align}\label{zero-one}
\lim_{t\to 0}f(t)\geq \sum_{i=1}^rf(0_i),\quad \lim_{t\to 1}f(t)\geq \sum_{i=1}^sf(1_i).
\end{align}
For each $n\ge 1$ set $f^{-1}((n,\infty])\cap (0,1)=U_n$. Then, since $f$ is lower semicontinuous the sets $U_n$, $n=1,2,\cdots$, are open and are such that $U_{n}\supseteq U_{n+1}$ for every $n\ge 1$. In addition,
\[
f(t)=\sum_{i=1}^\infty\mathds{1}_{U_n}(t),
\]
for all $t\in (0,1)$. (In the equation above $\mathds{1}_U$ denotes the characteristic function of the set $U$.)
Let us choose points 
\[
t_n\in\{0_1, 0_2,\cdots, 0_r\},\quad t'_n\in\{1_1, 1_2,\cdots, 1_s\},
\]
such that
\[
|\{n: t_n=0_i\}|=f(0_i), \quad |\{n: t'_n=1_j\}|=f(1_j),
\]
for every $1\le i\le r$ and $1\le j\le s$. Then, by \eqref{zero-one} and since $U_{n}\supseteq U_{n+1}$ for $n\ge 1$, the sets $V_n=\{t_n\}\cup U_n\cup \{t_n'\}$ are open in $\mathrm{sp}(A)$, and we have $f=\sum_{i=1}^\infty\mathds{1}_{V_n}$.

Let us show that for each $n\ge 1$ the characteristic function $\mathds{1}_{V_n}$ comes from a positive element of $A$. In order to show this, let us choose a rank one projection $P_n\in A$ and a continuous function $\lambda_n\colon [0,1]\to [0,1]$ such that $\pi_{t_n}(P_n)$ and $\pi_{t_n'}(P_n)$ have rank one, and such that the support of $\lambda_n$ is the open set $W_n\subseteq [0,1]$ given by
\begin{align*}
W_n=
\begin{cases}
U_n&\text{if } V_n=U_n,\\
\{0\}\cup U_n&\text{if } V_n\setminus U_n=\{0_i\}\text{ for some }1\le i\le r,\\
U_n\cup \{1\}&\text{if } V_n\setminus U_n=\{1_j\}\text{ for some } 1\le j\le s,\\
\{0\}\cup U_n\cup \{1\}&\text{if } V_n\setminus U_n=\{0_i, 1_j\}\text{ for some }1\le i\le r, 1\le j\le s.
\end{cases}
\end{align*}
Then, it follows that $\Rank[\lambda_nP_n]=\mathds{1}_{V_n}$. 

Let us choose positive contractions $a_n\in A\otimes \K$, $n=1,2,\cdots$, such that $a_ia_j=0$ for $i\neq j$, and such that $a_n$ is Cuntz equivalent to $\lambda_nP_n$ for all $n\ge 1$. Set $\sum_{i=1}^\infty a_i/2^i=a$. Then, for each $t\in \mathrm{sp}(A)$ we have
\begin{align*}
\Rank[a](t)&=\Rank\left[\lim_n\sum_{i=1}^n\frac{a_i}{2^i}\right](t)=\lim_n\sum_{i=1}^n\Rank[a_i](t)\\
&=\lim_n\sum_{i=1}^n[\lambda_i P_i](t)=\sup_n\sum_{i=1}^n\mathds{1}_{V_n}(t)\\
&=f(t)
\end{align*}
Therefore, $\Rank[a]=f$. This shows that the map $\Rank$ is surjective. Hence, it is an isomorphism in the category of ordered semigroups. Since $\Cu(A)$ belongs to the category $\CCu$ the semigroup in the right-hand side of equation \eqref{cuntz_splitting} belongs to the category $\CCu$, and the map $\Rank$ is a morphism in this category. 
\end{proof}

\subsection{Uniform structure}\label{Uniform-structure}
In this subsection we define a uniform structure in set of Cuntz semigroup morphisms between two Cuntz semigroup. This uniform structure was suggested to the author by L. Robert. Let us recall the definition of a uniform space.

\begin{definition}[\cite{Bourbaki}]
A uniform space $(X,\mathcal{U})$ is a set $X$ equipped with a nonempty family of subsets $\mathcal{U}$ of the Cartesian product $X\times X$ that satisfy the following axioms:
\begin{itemize}
\item[(i)] if $U$ is in $\mathcal{U}$, then $U$ contains the diagonal set $\{(x, x):x \in X \}$,

\item[(ii)] if $U$ is in $\mathcal{U}$ and $V$ is a subset of $X\times X$ which contains $U$, then $V$ is in $\mathcal{U}$,

\item[(iii)] if $U$ and $V$ are in $\mathcal{U}$, then $U \cap V$ is in $\mathcal{U}$,

\item[(iv)] if $U$ is in $\mathcal{U}$, then there exists $V$ in $\mathcal{U}$ such that
\[
V^2:=\{(x,z) : \exists y\text{ such that }(x,y),(y,z)\in V\},
\]
is a subset of $U$.

\item[(v)] if $U$ is in $\mathcal{U}$, then $U^{-1} := \{(y, x): (x, y)\in U\}$ is also in $\mathcal{U}$.
\end{itemize}
\noindent The family of subsets $\mathcal{U}$ is called a uniform structure on $X$ and its elements entourages.
\end{definition}

Let $S$ and $T$ be semigroups in the Cuntz category. Let us denote by $\mathrm{Mor}(S,T)$ the set of Cuntz semigroups morphisms from $S$ to $T$. For each finite subset $F$ of $S$ let us consider the set $U_F\subseteq \mathrm{Mor}(S,T)$  defined by
\begin{align*}
U_F=\{(\alpha, \beta) : \alpha(a)\le\beta(b), \beta(a)\le\alpha(b), \forall a,b\in F\text{ such that }a\ll b\}.
\end{align*} 
It follows that $U_{F\cup G}\subseteq U_F\cap U_G$, $U_F^{-1}=U_F$, and $U_{F'}^2\subseteq U_F$, where $F'$ is a finite subset of $S$ containing $F$ and satisfying that for all $a,b\in F$ such that $a\ll b$ there is $c\in F'$ such that $a\ll c\ll b$.
Therefore, the subsets $(U_F)_{F\subseteq S}$, $F$ finite, generate a uniform structure $\mathcal{U}_{S,T}$ on $\mathrm{Mor}(S,T)$.

Let $A=S_m[\overline{p},\overline{q},r,s]$ be a splitting interval algebra, and let $B$ be a C*-algebra. Let $x_{i,j}$, $1\le i\le r$, $1\le j\le s$, and $y_t$, $t\in (0,1)$, be the elements of $\Cu(A)$ defined by
\begin{align*}
x_{i,j}=\mathds{1}_{\{0_i\}\cup (0,1)\cup \{1_j\}},\quad y_t=\mathds{1}_{(t,1)\cup \{1_1\}},\quad y_1=0,
\end{align*}  
where $\mathds{1}_U$ denotes the characteristic function of the set $U$. For each $n\ge 1$, let us consider the finite set
\begin{align}\label{Fn}
F_n=\{x_{i,j} : 1\le i\le r, 1\le j\le s\}\cup \{y_{i/2^n}: 1\le i\le 2^n-1\}.
\end{align}
We have 
\begin{align}\label{uniform_1}
U_{F_{n+1}}\subseteq U_{F_n},\quad U_{F_{n+1}}^2\subseteq U_{F_n}.
\end{align}

\begin{proposition}\label{basis}
The entourages $(U_{F_n})_{n\in \N}$ form a basis for the uniform structure $\mathcal{U}_{\Cu(A), \Cu(B)}$.
\end{proposition}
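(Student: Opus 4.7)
The plan is to reduce the basis property to a single‐pair statement: since $(U_{F_n})$ is already a decreasing family of entourages satisfying the axioms of a basis by \eqref{uniform_1}, it suffices to show that for every finite $F\subseteq \Cu(A)$ there is $n$ with $U_{F_n}\subseteq U_F$. Unpacking the definition of $U_F$, and using that $F$ has only finitely many $\ll$-pairs, this reduces further to the following: for every pair $a\ll b$ in $\Cu(A)$ there is $n$ such that $(\alpha,\beta)\in U_{F_n}$ implies $\alpha(a)\le\beta(b)$ (the symmetric inequality $\beta(a)\le\alpha(b)$ is then handled identically by swapping the roles of $\alpha$ and $\beta$).

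I would identify $\Cu(A)$ with the LSC semigroup using Theorem~\ref{Cuntzs_splitting}, so $a$ and $b$ are LSC functions on $\mathrm{sp}(A)$. Writing $a=\sum_{k=1}^{N}\mathds{1}_{U_k}$ and $b=\sum_{k=1}^{N}\mathds{1}_{V_k}$ as finite sums of indicator functions of open sets with $\overline{U_k}\subseteq V_k$ (which is exactly the content of $a\ll b$ level by level), each open set in $\mathrm{sp}(A)$ decomposes into the elementary pieces available in this non‐Hausdorff spectrum: interior intervals $(s,t)\subseteq(0,1)$, left‐end pieces $\{0_i\}\cup(0,t)$, right‐end pieces $(s,1)\cup\{1_j\}$, and the full end‐to‐end pieces $\{0_i\}\cup(0,1)\cup\{1_j\}=x_{i,j}$. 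Choose $n$ large enough that the dyadic rationals $i/2^n$ separate every jump point appearing in the $U_k$ and the $V_k$.

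The key step is to bracket each elementary piece using the data in $F_n$. The $x_{i,j}$ are compact, so the $\ll$-pair $x_{i,j}\ll x_{i,j}$ in $F_n$ forces $\alpha(x_{i,j})=\beta(x_{i,j})$, which handles the end‐to‐end pieces exactly. A right‐end piece $\mathds{1}_{(s,1)\cup\{1_1\}}$ can be bracketed between $y_{t_1}$ and $y_{t_2}$ for dyadic $t_1<s<t_2$ via the $U_{F_n}$-relation coming from $y_{t_2}\ll y_{t_1}$; right‐end pieces attaching to $1_j$ with $j>1$, and symmetrically the left‐end pieces, are reduced to this case modulo $x_{i,j}$-data. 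For interior intervals I would exploit the LSC identity $\mathds{1}_{(s,t)}+y_t\le y_s$ (checked pointwise): applying $\alpha$ and combining with the $U_{F_n}$-bracketing of the $y$'s yields $\alpha(\mathds{1}_{(s,t)})+\alpha(y_t)\le\beta(y_{s'})$ for a suitable dyadic $s'<s$. Assembling these level by level and using additivity, monotonicity, and $\ll$-preservation of $\alpha,\beta$ gives the desired conclusion.

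The main obstacle will be the absence of cancellation in $\Cu(B)$: one cannot subtract the auxiliary $\alpha(y_t)$-terms off to isolate $\alpha(\mathds{1}_{(s,t)})$. The workaround is to set up the decompositions of $a$ and $b$ in tandem so that every auxiliary $y$-term introduced on the left of a bracketing inequality is exactly matched, via a common dyadic refinement of the jump points, by a corresponding $y$-term on the right; the combined inequality then collapses to $\alpha(a)\le\beta(b)$ by additivity rather than cancellation. Verifying that a single $n$ large enough supports this matching simultaneously for every $\ll$-pair in the finite set $F$ is the technical heart of the argument and relies on the density of dyadic rationals together with the finiteness of $F$.
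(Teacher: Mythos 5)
Your proposal has the right high-level shape, which does match the paper's: reduce to a single $\ll$-pair via the filter properties \eqref{uniform_1}, pass to the LSC picture via Theorem~\ref{Cuntzs_splitting}, decompose into elementary interval types, and bracket using the dyadic $y$'s and the compact $x_{i,j}$'s. But the key technical step has a genuine gap, and it is exactly the one you flag as the ``technical heart.''

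You correctly identify that cancellation is the obstacle, but the proposed workaround --- matching the auxiliary $y$-terms on both sides so that ``the inequality collapses by additivity rather than cancellation'' --- does not in fact avoid cancellation. Tracing your plan for an interior interval $a=\mathds{1}_{(s,t)}\ll b=\mathds{1}_{(s',t')}$: from $a+y_t\le y_s$ and the $U_{F_n}$-brackets one gets $\alpha(a)+\alpha(y_t)\le\beta(y_{s''})$ for a dyadic $s''\in(s',s)$, and then $\beta(y_{s''})\le\beta(b)+\beta(y_{t''})\le\beta(b)+\alpha(y_t)$ for a dyadic $t''\in(t,t')$. The best the ``matching'' achieves is $\alpha(a)+\alpha(y_t)\le\beta(b)+\alpha(y_t)$ --- a common summand appears on both sides, and to finish one still must cancel $\alpha(y_t)$, which is not compact in $\Cu(B)$ and $B$ is arbitrary, so $\Cu(B)$ is not cancellative. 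Additivity gives no way to strip the common summand; a cancellation theorem is needed.

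The paper's proof handles this by routing the chain of inequalities through a compact middle element so that the matched inequality comes with a strict $\ll$: it arranges
$\alpha(\cdot)+\alpha(\mathds{1}_{(t_1,s_1)})+\alpha(y_{i/2^n})\le\alpha(x_{1,1})\ll\alpha(x_{1,1})\le\alpha(\cdot)+\beta(\mathds{1}_{(t_2,s_2)})+\alpha(y_{i/2^n})$,
with the left- and right-hand auxiliaries identical (this is the ``matching'' idea, which you do have), and then invokes Theorem~1 of \cite{Elliott-cancellation} (equivalently Proposition~4.2 of \cite{Rordam-Winter}), i.e.\ the cancellation $x+z\ll y+z\Rightarrow x\le y$. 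Without passing through a compact element to obtain the $\ll$, and without citing this cancellation result, the argument cannot close. So the matching step in your proposal is a \emph{preparation} for cancellation, not a substitute for it; that ingredient is missing.

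A secondary point: writing $a=\sum_{k=1}^N\mathds{1}_{U_k}$, $b=\sum_{k=1}^N\mathds{1}_{V_k}$ with $\overline{U_k}\subseteq V_k$ ``level by level'' presumes aligned decompositions of the same length, which $a\ll b$ does not directly provide. The paper avoids this by building a rapidly increasing sequence $b_i\nearrow b$, choosing $i$ with $a\le b_i\ll b_{i+1}\le b$, and reducing to $U_{\{b_i,b_{i+1}\}}$; you should do something similar before decomposing.
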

\begin{proof} 
Since the entourages $(U_{F_n})_{n\in\N}$ satisfy \eqref{uniform_1} it is enough to show that given a finite subset $F$ of $\Cu(A)$ there is $n$ such that $U_{F_n}\subseteq U_F$. Let us first reduce to the case that $F$ consists of two characteristic functions of open intervals. In other words, characteristic functions of open sets of the form
$\{0_i\}\cup (0,1)\cup\{1_j\}$, $(t,1)\cup\{1_j\}$, $\{0_i\}\cup (0,t)$, or $(t,s)$,
with $t,s\in (0,1)$, $1\le i\le r$, and $1\le j\le s$.

We have $\bigcap_{a,b\in F,\, a\ll b}U_{\{a,b\}}\subseteq U_F$. Therefore, by \eqref{uniform_1} we may assume that $F$ consists of two elements. Let $F=\{a,b\}$ with $a\ll b$. Since $b$ is a lower semicontinuous function on $\mathrm{sp}(A)$, there exist open intervals $(V_n)_{n\in\N}$ in  $\mathrm{sp}(A)$ such that
\[
b=\sum_{n=1}^\infty \mathds{1}_{V_n}.
\]
For each $n\ge 1$ let us choose a sequence of open intervals $(V_{n,i})_{i\in\N}$ such that each $V_{n,i}$ has the same interval form as $V_n$,
\begin{align}\label{VK}
\bigcup_{i=1}^\infty V_{n,i}=V_n,\quad V_{n,1}\subseteq K_{n,1}\subseteq V_{n,2} \subseteq K_{n,2}\subseteq \ldots,
\end{align}
for some compact subsets $K_{n,i}$ of $\mathrm{sp}(A)$. Note that 
\begin{align}\label{Vn}
\mathds{1}_{V_{n,i}}\ll \mathds{1}_{V_{n,i+1}}.
\end{align}
Define
\[
b_i=\sum_{n=1}^i \mathds{1}_{V_{n,i}}.
\]
Then, by \eqref{VK} and \eqref{Vn} we have $b_i\ll b_{i+1}$ and $\sup b_i=b$. Since $a\ll b$ there exists $i\ge 1$ such that $a\le b_i\ll b_{i+1}\le b$, whence $U_{\{b_i,b_{i+1}\}}\in U_{\{a,b\}}$. Moreover, we have
\[
\bigcap_{n=1}^i U_{\{\mathds{1}_{V_{n,i}}, \mathds{1}_{V_{n,i+1}}\}}\subseteq U_{\{b_i,b_{i+1}\}}\subseteq U_{\{a,b\}}.
\]
This shows by \eqref{uniform_1} that $F$  may be taken to be a set formed by two characteristic functions of open intervals. 

There are a few cases to consider. Since the proof in all of them is similar we have chosen to give the proof in only two of them: $F=\{\mathds{1}_{\{0_i\}\cup (0,t)}, \mathds{1}_{\{0_i\}\cup (0,s)}\}$, with $t<s$, and $F=\{\mathds{1}_{(t_1,s_1)}, \mathds{1}_{(t_2,s_2)}\}$, with $t_2<t_1$ and $s_1<s_2$.

Suppose that $F=\{\mathds{1}_{\{0_i\}\cup (0,t)}, \mathds{1}_{\{0_i\}\cup (0,s)}\}$, with $t<s$.
Let us choose numbers $n$ and $i$ such that $t\le i/2^n< (i+1)/2^n\le s$, and let us show that $U_{F_n}\subseteq U_F$. 
By the choice of $n$ and $i$ we have
\[
\mathds{1}_{\{0_j\}\cup (0,t)}+ y_{i/2^n}\le x_{j,1}\ll x_{j,1}\le \mathds{1}_{\{0_j\}\cup (0,s)}+y_{(i+1)/2^n},
\]
for $1\le j\le r$. Let $(\alpha,\beta)\in U_{F_n}$. Then,
\[
\alpha(\mathds{1}_{\{0_j\}\cup (0,t)})+\alpha(y_{i/2^n})\le \alpha(x_{j,1})=\beta(x_{j,1})\le\beta(\mathds{1}_{\{0_j\}\cup (0,s)})+\beta(y_{(i+1)/2^n}),
\]
for $1\le j\le r$.
Using that $\beta(y_{(i+1)/2^n})\le \alpha(y_{i/2^n})$ in the equation above we conclude that
\[
\alpha(\mathds{1}_{\{0_j\}\cup (0,t)})+\alpha(y_{i/2^n})\le\alpha(x_{j,1})\ll \alpha(x_{j,1})\le \beta(\mathds{1}_{\{0_j\}\cup (0,s)})+\alpha(y_{i/2^n}).
\]
Hence, by Theorem 1 of \cite{Elliott-cancellation} (or by Proposition 4.2 of \cite{Rordam-Winter}), 
\[
\alpha(\mathds{1}_{\{0_j\}\cup (0,t)})\le \beta(\mathds{1}_{\{0_j\}\cup (0,t)}),
\]
for all $1\le j\le r$. By symmetry, $\beta(\mathds{1}_{\{0_j\}\cup (0,t)})\le \alpha(\mathds{1}_{\{0_j\}\cup (0,s)})$. Therefore, $U_{F_n}\subseteq U_F$.

Suppose that $F=\{\mathds{1}_{(t_1,s_1)}, \mathds{1}_{(t_2,s_2)}\}$, with $t_2<t_1$ and $s_1<s_2$.

Let us choose numbers $t'_1$ and $t'_2$ such that $t_2<t'_2<t'_1<t_1$. By the previous case, there exists $n$ such that 
\begin{align}\label{UFU}
U_{F_n}\subseteq U_{\left\{\mathds{1}_{\{0_1\}\cup (0,t'_2)}, \mathds{1}_{\{0_1\}\cup (0,t'_1)}\right\}}.
\end{align}
Moreover, we may take $n$ to be such that
\begin{align}\label{siis}
s_1\le i/2^n< (i+1)/2^n\le s_2,
\end{align}
holds for some $i\ge 1$. Let us show that $U_{F_n}\subseteq U_F$. 

Let $(\alpha, \beta)\in U_{F_n}$.
By \eqref{UFU} and \eqref{siis},
\[
\mathds{1}_{\{0_1\}\cup (0,t'_1)}+\mathds{1}_{(t_1,s_1)}+y_{i/2^n}\le x_{1,1}\ll x_{1,1}\le \mathds{1}_{\{0_1\}\cup (0,t'_2)}+\mathds{1}_{(t_2,s_2)}+y_{(i+1)/2^n}.
\]
Hence,
\begin{align}\label{alpha_beta}
\begin{aligned}
\alpha(\mathds{1}_{\{0_1\}\cup (0,t'_1)})+\alpha(\mathds{1}_{(t_1,s_1)})+\alpha(y_{i/2^n})&\le \alpha(x_{1,1})\\
&\ll\beta(x_{1,1})\\
&\le \beta(\mathds{1}_{\{0_1\}\cup (0,t'_2)})+\beta(\mathds{1}_{(t_2,s_2)})+\beta(y_{(i+1)/2^n}).
\end{aligned}
\end{align}
Using that $(\alpha, \beta)\in U_{F_n}$, and that $(\alpha, \beta)\in U_{\left\{\mathds{1}_{\{0_1\}\cup (0,t'_2)}, \mathds{1}_{\{0_1\}\cup (0,t'_1)}\right\}}$,
it follows that 
\[
\beta(y_{(i+1)/2^n})\le \alpha(y_{i/2^n}),\quad \beta(\mathds{1}_{\{0_1\}\cup (0,t'_2)})\le \alpha(\mathds{1}_{\{0_1\}\cup (0,t'_1)}).
\]
Hence, by \eqref{alpha_beta}
\begin{align*}
\alpha(\mathds{1}_{\{0_1\}\cup (0,t'_1)})+\alpha(\mathds{1}_{(t_1,s_1)})+\alpha(y_{i/2^n})&\le \alpha(x_{1,1})\\&\ll\alpha(x_{1,1})\\
&\le \alpha(\mathds{1}_{\{0_1\}\cup (0,t'_1)})+\beta(\mathds{1}_{(t_2,s_2)})+\alpha(y_{(i+1)/2^n}).
\end{align*}
By Theorem 1 of \cite{Elliott-cancellation} this implies that $\alpha(\mathds{1}_{(t_1,s_1)})\le \beta(\mathds{1}_{(t_2,s_2)})$. By symmetry, $\beta(\mathds{1}_{(t_1,s_1)})\le \alpha(\mathds{1}_{(t_2,s_2)})$, whence $(\alpha,\beta)\in U_{\{\mathds{1}_{(t_1,s_1)}, \mathds{1}_{(t_2,s_2)}\} }$.

\end{proof}

\subsection{Homomorphisms between splitting interval algebras}

Let $A=\mathrm{S}_m[\overline{p}, \overline{q}, r,s]$ be a splitting interval algebra. Let us consider a unital $\ast$-homomorphism $\phi\colon A\to \M_{m'}$. Then, $\phi$ is unitarily equivalent to a direct sum of irreducible representations of $A$. Therefore, there exist a unitary $U\in \M_{m'}$, tuples of positive integers $\overline{\nu}=(\nu_1, \nu_2, \cdots, \nu_r)$ and $\overline{\omega}=(\omega_1, \omega_2, \cdots, \omega_s)$, and a tuple of positive real numbers $\overline{\lambda}=(\lambda_1,\lambda_2,\cdots,\lambda_{\mu})$ with 
\[
0\le\lambda_1\le \lambda_2\le \cdots \le\lambda_\mu\le 1,
\]
such that
\begin{align}\label{phi_M_m}
\phi(f)=U^*\diag(\Lambda_{\overline{\nu}}(f), \Lambda_{\overline{\omega}}(f), f(\lambda_1),\cdots, f(\lambda_\mu))U,
\end{align}
where
\begin{align*}
\Lambda_{\overline{\nu}}=\bigoplus_{i=1}^r\pi_{0_i}\otimes 1_{\M_{\nu_i}},\quad \Lambda_{\overline{\omega}}=\bigoplus_{i=1}^r\pi_{1_i}\otimes 1_{\M_{\omega_i}}.
\end{align*}
By grouping the eigenvalues at $0$ and $1$, and using that 
\[
f(0)=\diag(\pi_{0_1}(f),\cdots, \pi_{0_r}(f)),\quad f(1)=\diag(\pi_{1_1}(f),\cdots, \pi_{1_r}(f)),
\]
we may always assume that $\nu_i=\omega_j=0$ for some $1\le i\le r$, and $1\le j\le s$. Note that if $\nu_i=\omega_j=0$ for some $1\le i\le r$,  and some $1\le j\le s$, then $\mu$, $\overline{\lambda}$, $\overline{\nu}$, and $\overline{\omega}$ are uniquely determined by $\phi$, and do not depend on the approximate unitary equivalence class of $\phi$. Therefore, we have a  map
\begin{align}\label{map1}
[\phi]\mapsto \left(\mu^\phi; \overline{\lambda^\phi}; \overline{\nu^\phi}; \overline{\omega^\phi}\right),
\end{align}
from the set of approximate unitary equivalent classes of unital $\ast$-homomorphisms from $A$ to $\M_{m'}$ to the set of tuples 
\[
\left(\mu; \overline{\lambda}; \overline{\nu}; \overline{\omega}\right) \in \N\times [0,1]^\mu\times\N^r\times \N^s,
\]
such that $0\le\lambda_1\le \lambda_2\le \cdots \le\lambda_\mu\le 1$, and such that $\nu_i=\omega_j=0$ for some $1\le i\le r$, and some $1\le j\le s$.

\begin{lemma}\label{Cuphi}
Let $A=\mathrm{S}_m[\overline{p}, \overline{q}, r,s]$ and let $\phi\colon A\to \M_{m'}$ be a $\ast$-homomorphism.
Then, the morphism $\Cu(\phi)\colon \Cu(A)\to \Cu(\M_{m'})$ is given by the formula:
\begin{align}\label{formulaCuphi}
\Cu(\phi)(f)=\sum_{i=1}^r\nu^\phi_i f(0_i)+\sum_{i=1}^s\omega^\phi_i f(1_i)+\sum_{i=1}^{\mu^\phi}f(\lambda^\phi_i),
\end{align}
where $\left(\mu^\phi; \overline{\lambda^\phi}; \overline{\nu^\phi}; \overline{\omega^\phi}\right)$ is the tuple associate to $\phi$ by the map \eqref{map1}. By convention in the equation above $f(0)=\sum_{i=1}^rf(0_i)$ and $f(1)=\sum_{i=1}^sf(1_i)$.
\end{lemma}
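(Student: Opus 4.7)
The plan is to reduce the computation of $\Cu(\phi)$ to the computation of $\Cu$ applied to each irreducible summand of $\phi$, via three standard reductions: invariance of $\Cu$ under unitary conjugation, additivity of $\Cu$ on direct sums of $*$-homomorphisms into a matrix algebra, and the explicit identification of $\Cu(A)$ given by Theorem \ref{Cuntzs_splitting}.

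First, I would observe that the right-hand side of \eqref{formulaCuphi} depends only on the approximate unitary equivalence class of $\phi$, since the data $(\mu^\phi;\overline{\lambda^\phi};\overline{\nu^\phi};\overline{\omega^\phi})$ does. The left-hand side also depends only on this class, because $\Cu(\Ad U\circ \psi)=\Cu(\psi)$ for any unitary $U$. Hence I may drop the conjugation $U$ in \eqref{phi_M_m} and work directly with
\[
\phi=\Lambda_{\overline{\nu^\phi}}\oplus \Lambda_{\overline{\omega^\phi}}\oplus \bigoplus_{j=1}^{\mu^\phi}\pi_{\lambda^\phi_j}.
\]

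Second, I would use the fact that under the canonical isomorphism $\Cu(\M_{m'})\cong \N\cup\{\infty\}$ (given by the rank of a positive element of $\M_{m'}\otimes\K$), a $*$-homomorphism $\psi\colon A\to \M_{m'}$ induces $\Cu(\psi)[a]=\rank((\psi\otimes \id_\K)(a))$. Since rank is additive on block-diagonal direct sums of positive matrices, the functor $\Cu$ distributes over direct sums of $*$-homomorphisms into matrix algebras; hence
\[
\Cu(\phi)=\Cu(\Lambda_{\overline{\nu^\phi}})+\Cu(\Lambda_{\overline{\omega^\phi}})+\sum_{j=1}^{\mu^\phi}\Cu(\pi_{\lambda^\phi_j}).
\]

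Third, I would compute each summand using Theorem \ref{Cuntzs_splitting}. For $t\in(0,1)$, the definition $(\Rank[a])(t)=\rank(\pi_t\otimes\id_\K)(a)$ gives $\Cu(\pi_t)(f)=f(t)$; likewise $\Cu(\pi_{0_i})(f)=f(0_i)$ and $\Cu(\pi_{1_i})(f)=f(1_i)$. Tensoring with $1_{\M_{\nu_i}}$ multiplies the rank by $\nu_i$, whence $\Cu(\pi_{0_i}\otimes 1_{\M_{\nu_i}})(f)=\nu_i f(0_i)$, and summing over $i$ gives $\Cu(\Lambda_{\overline{\nu^\phi}})(f)=\sum_{i=1}^r\nu^\phi_if(0_i)$, and symmetrically for $\Lambda_{\overline{\omega^\phi}}$. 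For an eigenvalue $\lambda^\phi_j\in(0,1)$ the contribution is $f(\lambda^\phi_j)$. Finally, for boundary eigenvalues $\lambda^\phi_j\in\{0,1\}$ the representation $\pi_0$ (respectively $\pi_1$) decomposes as $\pi_{0_1}\oplus\cdots\oplus\pi_{0_r}$ (respectively $\pi_{1_1}\oplus\cdots\oplus\pi_{1_s}$), so its rank contribution is $\sum_i f(0_i)$ (respectively $\sum_i f(1_i)$), matching exactly the convention stated in the lemma.

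This is essentially routine bookkeeping; the only point requiring care is the bookkeeping at the endpoints $0$ and $1$, which is precisely handled by the stated convention $f(0)=\sum_{i=1}^rf(0_i)$ and $f(1)=\sum_{i=1}^sf(1_i)$.
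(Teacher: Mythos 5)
Your proposal is correct and follows essentially the same route as the paper's proof: drop the unitary conjugation, split $\phi$ into a direct sum of irreducibles, use additivity of rank on block-diagonal sums, and identify each rank term via Theorem \ref{Cuntzs_splitting}. Your extra remark explicating why the convention $f(0)=\sum_i f(0_i)$, $f(1)=\sum_i f(1_i)$ handles boundary eigenvalues $\lambda^\phi_j\in\{0,1\}$ is a detail the paper leaves implicit, but it does not change the argument.
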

\begin{proof}
By the previous discussion and since the functor $\Cu(\cdot)$ is equal in $\ast$-homomorphisms that are unitarily equivalent we may assume that
\[
\phi(f)=\diag\left(\Lambda_{\overline{\nu^\phi}}(f), \Lambda_{\overline{\omega^\phi}}(f), f\left(\lambda^\phi_1\right),\cdots, f\left(\lambda^\phi_{\mu^\phi}\right)\right),
\]
or what it is the same
\begin{align}\label{sumadirecta}
\phi=\left(\bigoplus_{i=1}^r\pi_{0_i}\otimes 1_{\M_{\nu_i}}\right)\oplus\left(\bigoplus_{i=1}^r\pi_{1_i}\otimes 1_{\M_{\omega_i}}\right)\oplus\left(\bigoplus_{i=1}^{\mu^\phi}\delta_{\lambda^\phi_i}\right),
\end{align}
where $\delta_t\colon A\to \M_m$ denotes the evaluation map at the point $t$.

Let $a$ be a positive element of $A\otimes\K$. Then, we have
\begin{align*}
\Cu(\phi)[a]&=[(\phi\otimes\Id)(a)]=\rank((\phi\otimes\Id)(a))\\
&\stackrel{\eqref{sumadirecta}}{=}\sum_{i=1}^r\nu_i\rank((\pi_{0_i}\otimes\Id)(a))+\sum_{i=1}^s\omega_i\rank((\pi_{1_i}\otimes\Id)(a))+\sum_{i=1}^{\mu^\phi}\rank((\delta_{\lambda_i^\phi}\otimes \Id)(a))\\
&=\sum_{i=1}^r\nu^\phi_i [a](0_i)+\sum_{i=1}^s\omega^\phi_i [a](1_i)+\sum_{i=1}^{\mu^\phi}[a](\lambda^\phi_i).
\end{align*}
(We are using Theorem \ref{Cuntzs_splitting} to conclude the last step of the equation above.)
\end{proof}

Let $A=\mathrm{S}_m[\overline{p}, \overline{q}, r,s]$. For each $1\le i\le r$ and each $1\le j\le s$ let $P_{i,j}$ be a rank one projection such that $\pi_{0_i}(P_{i,j})$ and $\pi_{1_j}(P_{i,j})$ have rank one. Then, by Theorem \ref{Cuntzs_splitting} we have
\begin{align}\label{xyid}
x_{i,j}=[P_{i,j}], \quad y_t=[(\id-t)_+P_{1,1}]
\end{align}
for all $1\le i\le r$, $1\le j\le s$, and $t\in [0,1]$ (in the equation above $\id$ denotes the identity function of $\mathrm{C}[0,1]$). Consider the finite set 
\begin{align}\label{G}
G=\{P_{i,j}: 1\le i\le r,\, 1\le j\le s\}\cup\{(\id)P_{1,1}\}.
\end{align}
\begin{lemma}\label{lemma_uniqueness}
Let $n$ be a positive integer. The following statements hold:
\begin{itemize}
\item[(i)]  If $\phi,\psi\colon A\to \M_{m'}$ are $\ast$-homomorphisms such that
\begin{align}\label{ppdelta}
\|\phi(f)-\psi(f)\|<\frac{1}{2^n},
\end{align}
for all $f\in G$, then $(\Cu(\phi),\Cu(\psi))\in U_{F_n}$;

\item[(ii)] If $\phi,\psi\colon A\to \M_{m'}$ are $\ast$-homomorphisms such that $(\Cu(\phi),\Cu(\psi))\in U_{F_n}$, then $\mu^{\phi}=\mu^{\psi}$, $\overline{\nu^{\phi}}=\overline{\nu^{\psi}}$, $\overline{\omega^{\phi}}=\overline{\omega^{\psi}}$, and
\[
|\lambda_i^\phi-\lambda_i^\psi|<\frac{1}{2^{n-1}},
\]
for all $1\le i\le \mu^{\phi}$ ($=\mu^{\psi}$).

\end{itemize}
\end{lemma}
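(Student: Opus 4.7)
\emph{Plan.} I would prove the two parts separately, in both cases using Lemma \ref{Cuphi} to mediate between the analytic data (norm-closeness of $\phi$ and $\psi$ on $G$) and the combinatorial data (the tuple $(\mu, \overline{\lambda}, \overline{\nu}, \overline{\omega})$). The pairs $a \ll b$ arising in the definition of $U_{F_n}$ naturally split into three families: (A) the compact self-pairs $x_{i,j} \ll x_{i,j}$; (B) pairs $y_{k/2^n} \ll y_{\ell/2^n}$ with $\ell < k$; and (C) mixed pairs $y_{k/2^n} \ll x_{i,1}$, valid because the rank function of $y_{k/2^n}$ is pointwise dominated by that of $x_{i,1}$ and $x_{i,1}$ is compact.

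For part (i), family (A) is handled by observing that $\phi(P_{i,j})$ and $\psi(P_{i,j})$ are projections in $\M_{m'}$ at distance less than $1/2^n < 1$, hence unitarily equivalent and of equal rank, so $\Cu(\phi)(x_{i,j}) = \Cu(\psi)(x_{i,j})$. For family (B), since $\id \in \mathrm{C}[0,1]$ commutes with $P_{1,1}$, functional calculus gives $\phi((\id - k/2^n)_+ P_{1,1}) = (u - k/2^n)_+$ where $u := \phi(\id P_{1,1})$ (and analogously $v$ for $\psi$). The hypothesis $\|u - v\| < 1/2^n$ (since $\id \cdot P_{1,1} \in G$) combined with the standard Rørdam-type estimate $(u - t)_+ \preccurlyeq (v - (t - \epsilon))_+$ gives $(u - k/2^n)_+ \preccurlyeq (v - (k-1)/2^n)_+ \le (v - \ell/2^n)_+$ for $\ell \le k - 1$. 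For family (C), the Cuntz domination $(\id - k/2^n)_+ P_{1,1} \preccurlyeq P_{i,1}$ in $A$ (a direct rank-function comparison via Theorem \ref{Cuntzs_splitting}) maps under $\phi$, yielding $\Cu(\phi)(y_{k/2^n}) \le \Cu(\phi)(x_{i,1}) = \Cu(\psi)(x_{i,1})$ by family (A).

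For part (ii), family (A) gives the equalities $\Cu(\phi)(x_{i,j}) = \Cu(\psi)(x_{i,j})$ for all $i, j$; via Lemma \ref{Cuphi} these read $\nu_i^\phi + \omega_j^\phi + \mu^\phi = \nu_i^\psi + \omega_j^\psi + \mu^\psi$. Varying $j$ at fixed $i$ shows $\omega_j^\phi - \omega_j^\psi$ is independent of $j$, and symmetrically $\nu_i^\phi - \nu_i^\psi$ is independent of $i$; the normalization (some $\nu_{i_0}^\phi = 0 = \nu_{i_1}^\psi$ and similarly for $\omega$) forces both constants to vanish, whence $\overline{\nu^\phi} = \overline{\nu^\psi}$, $\overline{\omega^\phi} = \overline{\omega^\psi}$, and then $\mu^\phi = \mu^\psi$. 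For the $\lambda$ tuples, set $N_\phi(t) := \#\{j : \lambda_j^\phi > t\}$; family (B) with $\ell = k-1$, after cancelling $\omega_1^\phi = \omega_1^\psi$, becomes $N_\phi(k/2^n) \le N_\psi((k-1)/2^n)$ together with its symmetric counterpart. Arguing by contradiction on the sorted tuples, a gap $\lambda_i^\phi - \lambda_i^\psi \ge 1/2^{n-1}$ permits an integer $k$ in $[\lambda_i^\psi \cdot 2^n + 1,\, \lambda_i^\phi \cdot 2^n)$, from which $N_\phi(k/2^n) \ge \mu - i + 1 > \mu - i \ge N_\psi((k-1)/2^n)$, the desired contradiction.

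The main obstacle is the boundary analysis in part (ii): the normalization permits $\lambda_i^\psi = 0$ (or $\lambda_i^\phi = 1$), in which case the integer $k$ produced by the contradiction argument may fall outside $\{2, \ldots, 2^n - 1\}$, making family (B) unavailable. These extremal situations have to be rescued via family (C), using the equality $\Cu(\phi)(x_{i,1}) = \Cu(\psi)(x_{i,1})$ together with the structural formula of Lemma \ref{Cuphi} to pin down the number of $\lambda$'s at the endpoints of $[0,1]$; this is also the delicate step where the strict-versus-non-strict form of the bound $1/2^{n-1}$ is decided.
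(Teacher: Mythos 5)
Your proposal follows essentially the same route as the paper. In part (i) you handle the compacts $x_{i,j}$ via the close-projections argument and the $y$'s via a R{\o}rdam-type estimate applied to $\phi(\id\cdot P_{1,1})$ and $\psi(\id\cdot P_{1,1})$; this is exactly what the paper does, citing Lemma 2.2 of Kirchberg--R{\o}rdam and Lemma 1 of Robert--Santiago, respectively. In part (ii) you extract $\overline{\nu},\overline{\omega},\mu$ from the equalities at the compacts and the $\lambda$'s from the two-sided inequalities at the $y$'s, as in the paper; your observation that the differences $\nu_i^\phi-\nu_i^\psi$ and $\omega_j^\phi-\omega_j^\psi$ are index-independent constants forced to vanish by the normalization is a tidier rearrangement of the index-chase around equation \eqref{no}, but it rests on the same identities, and your $N_\phi,N_\psi$ counting argument is the same as the paper's, written more explicitly.

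The ``main obstacle'' you flag is a genuine observation, but the right diagnosis is that the \emph{statement} needs adjustment, not that family (C) must rescue the proof. The paper's own argument only establishes the non-strict bound $|\lambda_i^\phi-\lambda_i^\psi|\le 1/2^{n-1}$, and it is the non-strict form that is invoked later (in the structure proposition for maps into $\M_{m'}(\mathrm{C}[0,1])$ and in the proof of Theorem \ref{Uniqueness}); the strict $<$ in the lemma statement appears to be a typo. Under the non-strict formulation the contradiction hypothesis is $\lambda_i^\phi-\lambda_i^\psi>1/2^{n-1}$, so the interval $[\lambda_i^\psi\cdot 2^n+1,\ \lambda_i^\phi\cdot 2^n)$ has length strictly greater than one and, its right endpoint being greater than $2$ and at most $2^n$, always contains an integer $k$ with $2\le k\le 2^n-1$; family (B) therefore always applies, and no appeal to family (C) is needed. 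The strict version in fact fails: take $\overline{\nu^\phi}=\overline{\nu^\psi}$ and $\overline{\omega^\phi}=\overline{\omega^\psi}$ with all entries zero, every $\lambda_i^\psi=0$ and every $\lambda_i^\phi=1/2^{n-1}$; then $\Cu(\phi)(x_{i,j})=\Cu(\psi)(x_{i,j})=\mu$, $\Cu(\phi)(y_{1/2^n})=\mu$, $\Cu(\phi)(y_{k/2^n})=0$ for $k\ge 2$, and $\Cu(\psi)(y_{k/2^n})=0$ for all $k\ge 1$, so one checks directly that $(\Cu(\phi),\Cu(\psi))\in U_{F_n}$ even though $|\lambda_i^\phi-\lambda_i^\psi|=1/2^{n-1}$.
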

\begin{proof}
(i) Let $\phi,\psi\colon A\to \M_{m'}$ be $\ast$-homomorphisms satisfying \eqref{ppdelta}.  Let us show that $(\Cu(\phi), \Cu(\psi))\in U_{F_n}$. That is,
\begin{align*}
&\Cu(\phi)(x_{i,j})=\Cu(\psi)(x_{i,j}),\\
&\Cu(\phi)(y_{(k+1)/2^n})\le \Cu(\psi)(y_{k/2^n}),\\
&\Cu(\psi)(y_{(k+1)/2^n})\le \Cu(\phi)(y_{k/2^n}),
\end{align*}
 for all $1\le i\le r$, $1\le j\le s$, and $0\le k\le 2^n-1$.
 
Taking $f=P_{i,j}$ in equation \eqref{ppdelta} and applying Lemma 2.2 of \cite{Kirchberg-Rordam} to the elements $a=\phi(P_{i,j})$ and $b=\psi(P_{i,j})$ we have
\begin{align*}
\phi(P_{i,j})\preccurlyeq\psi(P_{i,j}),\quad \psi(P_{i,j})\preccurlyeq\phi(P_{i,j}).
\end{align*}
Hence, by \eqref{xyid}
\begin{align*}
\Cu(\phi)(x_{i,j})=\Cu(\phi)[P_{i,j}]=\Cu(\psi)[P_{i,j}]=\Cu(\psi)(x_{i,j}).
\end{align*}
Let $f=(\id)P_{1,1}$ in equation \eqref{ppdelta}. By Lemma 1 of \cite{Robert-Santiago} applied to the elements $a=\phi((\id)P_{1,1})$ and $b=\psi((\id)P_{1,1})$ 
\begin{align*}
&\phi((\id-(i+1)/2^n)_+P_{1,1})\preccurlyeq \psi((\id-i/2^n)_+P_{1,1})\\
&\psi((\id-(i+1)/2^n)_+P_{1,1})\preccurlyeq \phi((\id-i/2^n)_+P_{1,1}).
\end{align*}
Hence, by \eqref{xyid}
\begin{align*}
\Cu(\phi)(y_{(i+1)/2^n})\le\Cu(\psi)(y_{i/2^n}),\quad
\Cu(\psi)(y_{(i+1)/2^n})\le\Cu(\phi)(y_{i/2^n}).
\end{align*}

(ii) Let $\phi,\psi\colon A\to \M_{m'}$ be such that $(\Cu(\phi),\Cu(\psi))\in U_{F_n}$. By Lemma \ref{Cuphi} the morphisms $\Cu(\phi)$ and $\Cu(\psi)$ are given by:
\begin{align}\label{Cu_phi_psi}
\begin{aligned}
&\Cu(\phi)(f)=\sum_{i=1}^r\nu^\phi_i f(0_i)+\sum_{i=1}^s\omega^\phi_i f(1_i)+\sum_{i=1}^{\mu^\phi}f(\lambda^\phi_i),\\
&\Cu(\psi)(f)=\sum_{i=1}^r\nu^\psi_i f(0_i)+\sum_{i=1}^s\omega^\psi_i f(1_i)+\sum_{i=1}^{\mu^\psi}f(\lambda^\psi_i).
\end{aligned}
\end{align}
where by convention $f(0)=\sum_{i=1}^rf(0_i^A)$, and $f(1)=\sum_{i=1}^rf(1_i^A)$.
Evaluating $\Cu(\phi)$ and $\Cu(\psi)$ in the compact elements $x_{i,j}=\mathds{1}_{\{0_i\}\cup (0,1)\cup\{1_j\}}\in F_n$, $1\le i\le r$, $1\le j\le s$, and using that $(\Cu(\phi),\Cu(\psi))\in U_{F_n}$, we have
\begin{align}\label{no}
\nu^\phi_i+\omega^\phi_j+\mu^\phi=\Cu(\phi)(x_{i,j})=\Cu(\psi)(x_{i,j})=\nu^\psi_i+\omega^\psi_j+\mu^\psi.
\end{align}
By the choice of the tuples $\overline{\nu^\phi}$, $\overline{\omega^\phi}$, $\overline{\nu^\psi}$, and $\overline{\omega^\psi}$ there are indexes $i_1$, $i_2$, $j_1$, and $j_2$ such that $\nu^\phi_{i_1}=\omega^\phi_{j_1}=\nu^\psi_{i_2}=\omega^\psi_{j_2}=0$. Let us take $(i,j)$ equal to $(i_1,j_1)$, and $(i,j)$ equal to $(i_2, j_2)$  in equation \eqref{no}, then it follows that
\begin{align*}
&\mu^\phi\le\nu^\phi_{i_2}+\omega^\phi_{j_2}+\mu^\phi=\nu^\psi_{i_2}+\omega^\psi_{j_2}+\mu^\psi=\mu^\psi,\\
&\mu^\psi\le\nu^\psi_{i_1}+\omega^\psi_{j_1}+\mu^\psi=\nu^\phi_{i_1}+\omega^\phi_{j_1}+\mu^\phi=\mu^\phi.
\end{align*}
Therefore, $\mu^\phi=\mu^\psi$, and $\nu^\psi_{i_1}=\omega^\psi_{j_1}=\nu^\phi_{i_2}=\omega^\phi_{j_2}=0$. These identities together with equation \eqref{no} imply
\begin{align*}
&\nu^\phi_i+\mu^\phi=\nu^\phi_{i}+\omega^\phi_{j_1}+\mu^\phi=\nu^\psi_{i}+\omega^\psi_{j_1}+\mu^\psi
=\nu^\psi_{i}+\mu^\psi=\nu^\psi_{i}+\mu^\phi,\\
&\omega^\phi_j+\mu^\phi=\nu^\phi_{i_1}+\omega^\phi_{j}+\mu^\phi=\nu^\psi_{i_1}+\omega^\psi_{j}+\mu^\psi
=\omega^\psi_{j}+\mu^\psi=\omega^\psi_{j}+\mu^\phi.
\end{align*}
Therefore, $\nu^\phi_i=\nu^\psi_{i}$ and $\omega^\phi_j=\omega^\psi_j$, for all $1\le i\le r$ and $1\le j\le s$. It follows that $\overline{\nu^\phi}=\overline{\nu^\psi}$ and $\overline{\omega^\phi}=\overline{\omega^\psi}$.

Let us evaluate $\Cu(\phi)$ and $\Cu(\psi)$ at the elements $y_{i/2^n}=\mathds{1}_{(i/2^n,1)\cup \{1^A_1\}}\in F_n$, $0\le i\le 2^n-1$. By equation \eqref{Cu_phi_psi} we have that
\begin{align}\label{ppypp}
\begin{aligned}
&\Cu(\phi)(y_{i/2^n})=\omega_1^{\phi}+|\{j: \lambda^\phi_j> i/2^n\}|,\\
&\Cu(\psi)(y_{i/2^n})=\omega_1^{\psi}+|\{j: \lambda^\psi_j> i/2^n\}|,
\end{aligned}
\end{align} 
for every $0\le i\le 2^n-1$. (In the equation above we are using the notation $|S|$ to denote the number of elements of the set $S$.) Using that $y_{(i+1)/2^n}\ll y_{i/2^n}$, and that $(\Cu(\phi),\Cu(\psi))\in U_{F_n}$ the morphisms $\Cu(\phi)$ and $\Cu(\psi)$ satisfy that
\[
\Cu(\phi)(y_{(i+1)/2^n})\le \Cu(\psi)(y_{i/2^n}),\quad \Cu(\psi)(y_{(i+1)/2^n})\le \Cu(\phi)(y_{i/2^n}),
\]
for $0\le i\le 2^n-2$. Hence, by \eqref{ppypp}
\begin{align*}
|\{j: \lambda^\phi_j> (i+1)/2^n\}|\le |\{j: \lambda^\psi_j> i/2^n\}|,\\
|\{j: \lambda^\psi_j> (i+1)/2^n\}|\le |\{j: \lambda^\phi_j> i/2^n\}|,
\end{align*}
for all $0\le i\le 2^n-2$. These inequalities together with
\begin{align*}
\lambda^\phi_1\le \lambda^\phi_2\le\cdots\le \lambda^\phi_{\mu^\phi},\quad
\lambda^\psi_1\le \lambda^\psi_2\le\cdots\le \lambda^\psi_{\mu^\psi},
\end{align*}
imply that
\begin{align*}
|\lambda_i^\phi-\lambda_i^\psi|\le\frac{1}{2^{n-1}},
\end{align*}
for all $1\le i\le \mu^\phi$ ($=\mu^\psi$).
\end{proof}

\begin{proposition}
Let $A=\mathrm{S}_m[\overline{p}, \overline{q}, r,s]$ and $B=\M_{m'}(\mathrm{C}[0,1])$. Let $\phi\colon A\to B$ be a unital $\ast$-homomorphism. Then, there exists a unital $\ast$-homomorphism $\psi\colon A\to B$ that is approximately unitarily equivalent to $\phi$, and that is such that
\begin{align*}
\psi(f)=\diag\left(\Lambda_{\overline{\nu}}(f), \Lambda_{\overline{\omega}}(f), f\circ\lambda_1,\cdots, f\circ\lambda_\mu)\right),
\end{align*}
where the tuples $\overline{\nu}\in\N^r$ and $\overline{\omega}\in \N^s$ have a component equal to zero, and the functions $\lambda_i\colon [0,1]\to [0,1]$ are continuous and such that
$0\le\lambda_1\le \lambda_2\le \cdots \le\lambda_\mu\le 1$.
\end{proposition}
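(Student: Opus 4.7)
The plan is to extract from $\phi$ the canonical pointwise data that will define $\psi$, and then upgrade the pointwise unitary equivalence of $\phi_t$ and $\psi_t$ to an approximate unitary equivalence on the whole interval by a patching argument.

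For each $t \in [0,1]$, the composition $\phi_t := \mathrm{ev}_t \circ \phi \colon A \to \M_{m'}$ is a unital $\ast$-homomorphism with associated tuple $(\mu^{\phi_t}; \overline{\lambda^{\phi_t}}; \overline{\nu^{\phi_t}}; \overline{\omega^{\phi_t}})$ from \eqref{map1}. Since $\phi(f) \in \M_{m'}(\mathrm{C}[0,1])$ is norm-continuous in $t$ for every $f \in A$, the map $t \mapsto \phi_t$ is uniformly continuous on $[0,1]$ in the pointwise-norm topology, so for any $n \ge 1$ there exists $\delta > 0$ such that $\|\phi_t(f) - \phi_{t'}(f)\| < 1/2^n$ for every $f$ in the finite set $G$ of \eqref{G} whenever $|t - t'| < \delta$. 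Lemma \ref{lemma_uniqueness}(i) gives $(\Cu(\phi_t), \Cu(\phi_{t'})) \in U_{F_n}$, and part (ii) of the same lemma then yields $\mu^{\phi_t} = \mu^{\phi_{t'}}$, $\overline{\nu^{\phi_t}} = \overline{\nu^{\phi_{t'}}}$, $\overline{\omega^{\phi_t}} = \overline{\omega^{\phi_{t'}}}$, and $|\lambda_i^{\phi_t} - \lambda_i^{\phi_{t'}}| \le 1/2^{n-1}$. Since $[0,1]$ is connected, the integer data is constant on $[0,1]$; call the common values $\mu$, $\overline{\nu}$, $\overline{\omega}$ (so in particular $\overline{\nu}$ and $\overline{\omega}$ each have a zero component). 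Letting $n$ grow shows that, with the non-decreasing ordering, each $\lambda_i(t) := \lambda_i^{\phi_t}$ is a continuous map $[0,1] \to [0,1]$.

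Define $\psi \colon A \to \M_{m'}(\mathrm{C}[0,1])$ by
\[
\psi(f)(t) = \diag\bigl(\Lambda_{\overline{\nu}}(f),\; \Lambda_{\overline{\omega}}(f),\; f(\lambda_1(t)),\; \ldots,\; f(\lambda_\mu(t))\bigr).
\]
This is a well-defined unital $\ast$-homomorphism of the required form, and by \eqref{map1} the homomorphisms $\phi_t$ and $\psi_t$ are unitarily equivalent in $\M_{m'}$ for every $t$. To promote this pointwise equivalence to approximate unitary equivalence of $\phi$ and $\psi$, fix $\epsilon > 0$ and a finite $F \subset A$, choose a partition $0 = t_0 < \cdots < t_N = 1$ fine enough that $\|\phi_t(f) - \phi_{t_k}(f)\|$ and $\|\psi_t(f) - \psi_{t_k}(f)\|$ are below $\epsilon/3$ on $[t_{k-1}, t_k]$ for every $f \in F$, select unitaries $V_k \in \M_{m'}$ with $V_k^\ast \phi_{t_k} V_k = \psi_{t_k}$, and interpolate them by a continuous unitary path $U \in \M_{m'}(\mathrm{C}[0,1])$ with $U(t_k) = V_k$; the conjugate $U^\ast \phi\, U$ is then close to $\psi$ on $F$.

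The main obstacle is the controlled interpolation in the last step: a priori $V_k$ and $V_{k+1}$ may be far apart in $U(\M_{m'})$, so a naive unitary path could cause $U(t)^\ast \phi_t U(t)$ to drift away from $\psi_t$ in the interior of $[t_k, t_{k+1}]$. The resolution exploits the non-uniqueness of $V_{k+1}$, which is determined only up to left multiplication by a unitary in $\phi_{t_{k+1}}(A)'$. Setting $W = V_{k+1} V_k^\ast$ and chaining the near-equalities
\[
V_{k+1}^\ast \phi_{t_{k+1}} V_{k+1} = \psi_{t_{k+1}} \approx \psi_{t_k} = V_k^\ast \phi_{t_k} V_k \approx V_k^\ast \phi_{t_{k+1}} V_k
\]
shows that $W^\ast \phi_{t_{k+1}} W \approx \phi_{t_{k+1}}$, i.e.\ $W$ almost commutes with the finite-dimensional algebra $\phi_{t_{k+1}}(A)$. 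A standard perturbation argument then supplies a unitary $W_0 \in \phi_{t_{k+1}}(A)'$ close to $W$; replacing $V_{k+1}$ by $W_0^\ast V_{k+1}$ (which still implements $\phi_{t_{k+1}} \mapsto \psi_{t_{k+1}}$) makes it close to $V_k$, and a short exponential path of unitaries between them keeps the approximation intact. Refinement of the partition then produces the required $U$.
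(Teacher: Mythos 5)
Your extraction of the pointwise tuple data $(\mu,\overline{\nu},\overline{\omega},\lambda_i(t))$ via Lemma \ref{lemma_uniqueness}, and the construction of $\psi$, are exactly as in the paper; the genuine divergence is in the last step, establishing the approximate unitary equivalence of $\phi$ and $\psi$. The paper observes that $\phi$ and $\psi$ have the same rank function, so that $\Cu(\phi)=\Cu(\psi)$ by Theorem \ref{Cuntzs_splitting}, hence $K_0(\phi)=K_0(\psi)$, and together with the equality of eigenvalues at each point it invokes Proposition 7.3 of \cite{Su} as a black box. You instead give a direct patching argument: at a fine partition you pick unitaries $V_k$ implementing the pointwise conjugacy at each node, exploit the freedom of left-multiplying $V_{k+1}$ by a unitary in $\phi_{t_{k+1}}(A)'$ to correct it so that consecutive implementers are close, and then splice by short unitary paths. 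The correction step relies on the standard fact that a unitary of $\M_{m'}$ almost commuting with a finite-dimensional unital subalgebra $C$ is close to a unitary in $C'$ (average over the unitary group of $C$ to land in $C'$, then take the unitary in the polar decomposition; the constant depends only on $m'$), and since the estimate on each subinterval $[t_k,t_{k+1}]$ is purely local, errors do not accumulate as the partition is refined. Your route is more self-contained and elementary (it avoids Su's classification theorem), at the cost of having to state and justify this perturbation lemma, which you currently invoke only as ``a standard perturbation argument''; spelling it out with an explicit estimate would be the one thing needed to make the proof fully rigorous.
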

\begin{proof}
Let $\phi\colon A\to B$ be a unital $\ast$-homomorphism. For each $t\in [0,1]$ let us consider the composition $\delta_t\circ\phi\colon A\to \M_{m'}$, where $\delta_t\colon B\to \M_{m'}$ denotes the evaluation map at the point $t$. Using the map \eqref{map1} we can associate to $\delta_t\circ\phi$ a tuple $\left(\mu(t), \overline{\lambda(t)}, \overline{\nu(t)}, \overline{\omega(t)}\right)$ such that
\[
0\le\lambda_1(t)\le\lambda_2(t)\le \cdots\le \lambda_{\mu(t)}\le 1,
\]
such that $\overline{\nu(t)}$ and $\overline{\omega(t)}$ have a component equal to zero, and such that
\begin{align*}
\phi(f)(t)=(\delta_t\circ\phi)(f)=U_t^*\diag\left(\Lambda_{\overline{\nu(t)}}(f), \Lambda_{\overline{\omega(t)}}(f), f(\lambda_1(t)),\cdots, f(\lambda_{\mu(t)}(t))\right)U_t,
\end{align*}
for some unitary $U_t\in \M_{m'}$. Let us show that $\mu(t)$, $\overline{\nu(t)}$, and $\overline{\omega(t)}$ do not depend on $t$, and that for each $1\le i\le\mu=\mu(t)$ the function $t\in [0,1]\to \lambda_i(t)$ is continuous.

Let $n\in \N$ be fixed and let $G\subset A$ be the finite set defined in \eqref{G}. Since the functions $\phi(f)$, $f\in G$, are uniformly continuous there exists $\epsilon>0$ such that $|t-t'|<\epsilon$ implies 
\begin{align*}
\|(\delta_t\circ\phi)(f)-(\delta_{t'}\circ\phi)(f)\|<\frac{1}{2^n},
\end{align*}
for every $f\in G$. Hence, by (i) of Lemma \ref{lemma_uniqueness} we have $(\Cu(\delta_t\circ\phi), \Cu(\delta_{t'}\circ\phi))\in U_{F_n}$. This implies by the second part of the same lemma that $\mu(t)=\mu(t')$, $\overline{\nu(t)}=\overline{\nu(t')}$, $\overline{\omega(t)}=\overline{\omega(t')}$, and
\begin{align*}
|\lambda_i(t)-\lambda_i(t')|\le\frac{1}{2^{n-1}},
\end{align*}
for every $t,t'\in [0,1]$ with $|t-t'|<\epsilon$.
It follows that $\mu(t)$, $\overline{\nu(t)}$, and $\overline{\omega(t)}$ are constant, and that for each $1\le i\le\mu=\mu(t)$ the function $t\in [0,1]\to \lambda_i(t)$ is continuous. To simplify notation let us write $\mu$, $\overline{\nu}$, and $\overline{\omega}$ instead of $\mu(t)$, $\overline{\nu(t)}$, and $\overline{\omega(t)}$.

For each $t\in [0,1]$ and each $f\in A$ we have
\begin{align}\label{standard_0}
\phi(f)(t)=U_t^*\diag\left(\Lambda_{\overline{\nu}}(f), \Lambda_{\overline{\omega}}(f), f(\lambda_1(t)),\cdots, f(\lambda_\mu(t))\right)U_t.
\end{align}
Consider the $\ast$-homomorphism $\psi\colon A\to B$ defined by
\begin{align*}
\psi(f)=\diag\left(\Lambda_{\overline{\nu}}(f), \Lambda_{\overline{\omega}}(f), f\circ\lambda_1,\cdots, f\circ\lambda_\mu)\right).
\end{align*}
Then, $\rank((\phi\otimes \id_{\K})(f))=\rank((\psi\otimes \id_{\K})(f))$ for all $f\in A\otimes \K$. Hence, by Theorem \ref{Cuntzs_splitting} we have $\Cu(\phi)=\Cu(\psi)$. This implies that $\mathrm{K}_0(\phi)=\mathrm{K}_0(\psi)$ since $A$ is stably finite. By the choice of $\psi$, $\phi(f)(t)$ and $\psi(f)(t)$ have the same eigenvalues for each $t\in [0,1]$ and each $f\in A$. Therefore, by Proposition 7.3 of \cite{Su} the $\ast$-homomorphisms $\phi$ and $\psi$ are approximately unitarily equivalent.
\end{proof}

\begin{theorem}\label{homomorphism1}
Let $A=\mathrm{S}_m[\overline{p}, \overline{q}, r,s]$ and $B=\mathrm{S}_{m'}[\overline{p'}, \overline{q'}, r',s']$. Let $\phi\colon A\to B$ be a unital $\ast$-homomorphism. Then, there exists a unital $\ast$-homomorphism $\psi\colon A\to B$ that is approximately unitarily equivalent to $\phi$, and that is such that
\begin{itemize}
\item[(i)]
\begin{align*}
\psi(f)=U^*\diag\left(\Lambda_{\overline{\nu}}(f), \Lambda_{\overline{\omega}}(f), f\circ\lambda_1,\cdots, f\circ\lambda_\mu)\right)U,
\end{align*}
where $U$ is a unitary of $\M_{m'}(\mathrm{C}[0,1])$ such that $U(0)$ and $U(1)$ are permutation matrices, the tuples $\overline{\nu}\in\N^r$ and $\overline{\omega}\in \N^s$ have a component equal to zero, and the functions $\lambda_i\colon [0,1]\to [0,1]$ are continuous and such that
\[
0\le\lambda_1\le \lambda_2\le \cdots \le\lambda_\mu\le 1;
\]

\item[(ii)] for each $t\in \mathrm{sp}(B)\setminus (0,1)$,

\begin{align*}
(\pi_t\circ\psi)(f)=\diag\left(\Lambda_{\overline{\nu_t}}(f), \Lambda_{\overline{\omega_t}}(f), f(\lambda_{t,1}),\cdots, f(\lambda_{t,\mu_t})\right), 
\end{align*}
where the tuples $\overline{\nu_t}\in \N^r$ and $\overline{\omega_t}\in \N^s$ have a component equal to zero, and
\[
0\le\lambda_{t,1}\le \lambda_{t,2}\le \cdots \le\lambda_{t,\mu_t}\le 1
\]
($\pi_t$ denotes the irreducible representation of $B$ given in \eqref{irep}.)
\end{itemize}
\end{theorem}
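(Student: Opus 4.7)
The plan is to reduce the theorem to the previous proposition (the case $B=\M_{m'}(\mathrm{C}[0,1])$) by composing with the inclusion $\iota\colon B\hookrightarrow \M_{m'}(\mathrm{C}[0,1])$. Applying that result to $\iota\circ\phi$ produces a $\ast$-homomorphism $\psi_0\colon A\to \M_{m'}(\mathrm{C}[0,1])$, approximately unitarily equivalent to $\iota\circ\phi$ in $\M_{m'}(\mathrm{C}[0,1])$, of the form
\[
\psi_0(f)=\diag\bigl(\Lambda_{\overline{\nu}}(f),\Lambda_{\overline{\omega}}(f),f\circ\lambda_1,\ldots,f\circ\lambda_\mu\bigr),
\]
with $\overline{\nu}\in\N^r$ and $\overline{\omega}\in\N^s$ each having a zero component, and continuous $\lambda_i\colon[0,1]\to[0,1]$ with $\lambda_1\le\cdots\le\lambda_\mu$.

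\textbf{Reshaping into $B$.} Next I would construct a unitary $U\in \M_{m'}(\mathrm{C}[0,1])$ with $U(0)$ and $U(1)$ permutation matrices such that $\psi:=U^\ast\psi_0 U$ maps into $B$ and satisfies (ii). The crucial observation is that $\delta_0\circ\psi_0$ and $\delta_0\circ\phi$, being approximately unitarily equivalent $\ast$-homomorphisms $A\to \M_{m'}$, are in fact unitarily equivalent (by compactness of the unitary group of $\M_{m'}$), so they share the same decomposition into irreducible representations of $A$. Since $\delta_0\circ\phi$ factors through $\bigoplus_{k=1}^{r'}\M_{p'_k}(\C)$, this common decomposition partitions into $p'_k$-sized subfamilies, one per boundary block of $B$. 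Applying the same partition to the already-decomposed blocks of $\diag(\Lambda_{\overline{\nu}}(f),\Lambda_{\overline{\omega}}(f),f(\lambda_1(0)),\ldots,f(\lambda_\mu(0)))$ yields a permutation matrix $P_0\in \M_{m'}$ such that $P_0^\ast\psi_0(f)(0)P_0\in \bigoplus_{k=1}^{r'}\M_{p'_k}(\C)$ and each $\M_{p'_k}$-block is in the standard form required by (ii). Construct $P_1$ analogously at $t=1$ and interpolate $P_0,P_1$ via a continuous path of unitaries in $\M_{m'}$ (using path-connectedness of the unitary group) to obtain $U$. Then $\psi:=U^\ast\psi_0 U$ satisfies (i) and (ii) by construction.

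\textbf{Main obstacle: approximate unitary equivalence inside $B$.} The principal technical point is to upgrade the approximate unitary equivalence from $\M_{m'}(\mathrm{C}[0,1])$ to $B$. The unitaries $V_n\in \M_{m'}(\mathrm{C}[0,1])$ implementing the equivalence between $\psi$ and $\iota\circ\phi$ need not lie in $B$. However, at each endpoint $t\in\{0,1\}$, the maps $\delta_t\circ\phi$ and $\delta_t\circ\psi$ are unitarily equivalent via unitaries in the respective boundary subalgebras $\bigoplus_{k=1}^{r'}\M_{p'_k}(\C)$ and $\bigoplus_{k=1}^{s'}\M_{q'_k}(\C)$, thanks to the per-block matching of irreducible decompositions built into the construction of $P_0$ and $P_1$. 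A continuity-and-interpolation argument in the spirit of the previous proposition's proof and Proposition 7.3 of \cite{Su} then allows one to deform the $V_n$ into unitaries $W_n\in B$ with the prescribed endpoint values, still approximately implementing the equivalence. Carrying out this deformation so as to preserve the approximation on $[0,1]$ while forcing the endpoint values into the required block subalgebras is the main delicate step.
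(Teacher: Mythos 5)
Your construction of $\psi$ is essentially the paper's: apply the previous proposition to $\iota\circ\phi$ to get a diagonal form $\psi_0$ into $\M_{m'}(\mathrm{C}[0,1])$, then build permutation matrices at the endpoints by matching the grouping of irreducible components of $\delta_0\circ\phi$ (resp.\ $\delta_1\circ\phi$) into the blocks $\M_{p'_k}$ (resp.\ $\M_{q'_k}$) of $B$, interpolate them by a continuous unitary path $U$, and set $\psi=U^*\psi_0 U$. Up to this point your argument agrees with the paper's.

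The gap is in your ``main obstacle'' paragraph, and it is a genuine one. You propose to \emph{deform} the ambient approximating unitaries $V_n\in\M_{m'}(\mathrm{C}[0,1])$ into unitaries $W_n\in B$ with prescribed endpoint values, and you explicitly leave ``carrying out this deformation'' open as ``the main delicate step.'' No such deformation is carried out in the paper, and attempting one directly would be tricky: the $V_n$ need not be anywhere near $B$, and forcing endpoint values while preserving a uniform approximation on $[0,1]$ is not obviously controllable. The paper sidesteps this entirely. Since $\pi_t\circ\phi$ and $\pi_t\circ\psi$ are \emph{unitarily} equivalent for every $t\in\mathrm{sp}(B)$ by construction, the rank functions agree at every point, so $\Cu(\phi)=\Cu(\psi)$ by Theorem~\ref{Cuntzs_splitting}; since $A$ is stably finite, this forces $\mathrm{K}_0(\phi)=\mathrm{K}_0(\psi)$; and then Proposition~7.3 of \cite{Su} is invoked \emph{as a uniqueness theorem} for $\ast$-homomorphisms into $B$ (not as a template for a deformation argument), yielding approximate unitary equivalence in $B$ directly. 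You cite Su's Proposition~7.3 only ``in the spirit of'' a deformation, which misreads its role: it is the black box that closes the gap, eliminating any need to massage the $V_n$. So while you have correctly identified the crux, you have not resolved it, and the resolution is to verify the hypotheses of Su's uniqueness theorem rather than to build approximating unitaries by hand.
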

\begin{proof}
Let us consider $\phi$ as a $\ast$-homomorphism from $A$ to $\M_{m'}(\mathrm{C}[0,1])$. Then, by the previous discussion there are a family of unitaries $U_t\in \M_{m'}$, $t\in [0,1]$, tuples $\overline{\nu}\in\N^r$ and $\overline{\omega}\in \N^s$ that have a component equal to zero, and continuous functions $\lambda_i\colon [0,1]\to [0,1]$, $1\le i\le\mu$, with
\[
0\le\lambda_1\le \lambda_2\le \cdots \le\lambda_\mu\le 1,
\]
such that \eqref{standard_0} holds for $\phi$. 

For each $t\in \mathrm{sp}(B)\setminus (0,1)$ define
\[
\left(\overline{\nu_t}; \overline{\omega_t}; \mu_t;\lambda_{t,1},\lambda_{t,2},\cdots, \lambda_{t, \mu_t}\right):=\left(\overline{\nu^{\pi_t\circ\phi}}; \overline{\omega^{\pi_t\circ\phi}}; \mu^{\pi_t\circ\phi};\lambda^{\pi_t\circ\phi}_1, \lambda^{\pi_t\circ\phi}_2\cdots, \lambda^{\pi_t\circ\phi}_{\mu^{\pi_t\circ\phi}} \right),
\]
where the right hand side of the equation above is the tuple associated to the $\ast$-homomorphism $\pi_t\circ \phi$ by the map \eqref{map1}. It is clear that $\pi_t\circ \phi$ is unitarily equivalent to the $\ast$-homomorphism
\[
f\in A\longmapsto \diag\left(\Lambda_{\overline{\nu_t}}(f), \Lambda_{\overline{\omega_t}}(f), f(\lambda_{t,1}),\cdots, f(\lambda_{t,\mu_t})\right).
\]
Since for every $f\in A$ we have
\[
\phi(f)(0)=\bigoplus_{i=1}^{r'}(\pi_{0_i}\circ \phi)(f), \quad \phi(f)(1)=\bigoplus_{i=1}^{s'}(\pi_{1_i}\circ \phi)(f),
\]
there are permutation matrices $S_0, S_1\in \M_{m'}$ such that
\begin{align*}
&S_0^*\diag\left(\Lambda_{\overline{\nu}}(f), \Lambda_{\overline{\omega}}(f), f(\lambda_1(0)),\cdots, f(\lambda_\mu(0)))\right)S_0\\
&=\bigoplus_{i=1}^{r'} \diag\left(\Lambda_{\overline{\nu_{0_i}}}(f), \Lambda_{\overline{\omega_{0_i}}}(f), f(\lambda_{0_i, 1}),\cdots, f(\lambda_{0_i, \mu_{0_i}})\right),\\
&S_1^*\diag\left(\Lambda_{\overline{\nu}}(f), \Lambda_{\overline{\omega}}(f), f(\lambda_1(1)),\cdots, f(\lambda_\mu(1)))\right)S_1\\
&=\bigoplus_{i=1}^{r'} \diag\left(\Lambda_{\overline{\nu_{1_i}}}(f), \Lambda_{\overline{\omega_{1_i}}}(f), f(\lambda_{1_i,1}),\cdots, f(\lambda_{1_i,\mu_{1_i}})\right),
\end{align*}
for all $f\in A$. Choose a unitary $U\in \M_{m'}(\mathrm{C}[0,1])$ such that $U(0)=S_0$ and $U(1)=S_1$. Let us define the $\ast$-homomorphism $\psi\colon A\to B$ by
\[
\psi(f)=U^*\diag\left(\Lambda_{\overline{\nu}}(f), \Lambda_{\overline{\omega}}(f), f\circ\lambda_1,\cdots, f\circ\lambda_\mu)\right)U.
 \]
Then, by construction $\psi$ satisfies conditions (i) and (ii) of the theorem. It is left to prove that $\psi$ is approximately unitarily equivalent to $\phi$. By Proposition 7.3 of \cite{Su} it is enough to show that $\Cu(\phi)=\Cu(\psi)$ since in the current case this implies that $\mathrm{K}_0(\phi)=\mathrm{K}_0(\psi)$. By the construction of $\psi$, for each $t\in \mathrm{sp}(B)$ the $\ast$-homomorphisms $\pi_t\circ\phi$ and $\pi_t\circ\psi$ are unitarily equivalent. Hence, 
\[
\rank(((\pi_t\otimes \id_{\mathcal{K}})\circ(\phi\otimes \id_{\mathcal{K}}))(f))=\rank(((\pi_t\otimes \id_{\mathcal{K}})\circ(\psi\otimes \id_{\mathcal{K}}))(f))
\]
for all $f\in (A\otimes \mathcal{K})^+$. Therefore, $\Cu(\phi)=\Cu(\psi)$ by Theorem \ref{Cuntzs_splitting}.
\end{proof}

\begin{definition}\label{standard}
Let $A$ and $B$ be splitting interval algebras with $A=\mathrm{S}_m[\overline{p}, \overline{q}, r,s]$ and $B=\mathrm{S}_{m'}[\overline{p'}, \overline{q'}, r',s']$. Let us say that a $\ast$-homomorphism $\phi\colon A\to B$ has standard form if satisfies parts \rm{(i)} and \rm{(ii)} of Theorem \ref{homomorphism1}.
\end{definition}

\section{Approximate uniqueness}

Let $A$ be a C*-algebra. We say that a class of C*-algebras $\mathcal{B}$ has the approximate uniqueness property with respect to $A$ if given $\epsilon>0$ and a finite subset $G\subset A$ there exists a finite subset $F\subset \Cu(A)$ such that; if $B\in \mathcal{B}$, and $\phi,\psi\colon A\to B$ are $\ast$-homomorphisms such that 
\[
(\Cu(\phi), \Cu(\psi))\in U_F,
\]
then there exists a unitary $u\in \widetilde{B}$ such that
\begin{align}\label{au_2}
\|\phi(a)-u^*\psi(a)u\|<\epsilon,
\end{align}
for all $a\in G$. If the class $\mathcal{B}$ consists of a single C*-algebra $B$ we will say that the C*-algebra $B$ has the approximate uniqueness property with respect to $A$.

\begin{theorem}\label{Uniqueness}
Let $A$ be a splitting interval algebra, and let $B$ be a sequential inductive limit of finite direct sums of splitting interval algebras. Then $B$ has the approximate uniqueness property with respect to $A$.
\end{theorem}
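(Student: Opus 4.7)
The strategy is to establish the conclusion in three stages: first for $B$ a single splitting interval algebra, then for finite direct sums of such, and finally for sequential inductive limits.

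\emph{Stage 1.} Suppose $B = \mathrm{S}_{m'}[\overline{p'}, \overline{q'}, r', s']$. I claim that $F = F_n$ from \eqref{Fn} works for $n$ sufficiently large in terms of $\epsilon$ and $G$. By Theorem~\ref{homomorphism1}, I first replace $\phi, \psi$ by approximately unitarily equivalent homomorphisms in standard form, absorbing error $\epsilon/3$. Embedding $B$ into $\M_{m'}(\mathrm{C}[0,1])$ and postcomposing with the evaluation $\delta_t$ preserves the relation $U_{F_n}$---because any $\Cu$-morphism applied after $\phi, \psi$ still respects the inequalities defining $U_{F_n}$---so $(\Cu(\delta_t \circ \phi), \Cu(\delta_t \circ \psi)) \in U_{F_n}$ for every $t \in [0,1]$. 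Applying Lemma~\ref{lemma_uniqueness}(ii) pointwise yields $\overline{\nu^\phi} = \overline{\nu^\psi}$, $\overline{\omega^\phi} = \overline{\omega^\psi}$, $\mu^\phi = \mu^\psi$, and $|\lambda_i^\phi(t) - \lambda_i^\psi(t)| < 1/2^{n-1}$ uniformly in $t, i$. With $n$ chosen so that every $a \in G$ varies by less than $\epsilon/3$ under shifts of size $1/2^{n-1}$, a unitary $u \in \widetilde{B}$ implementing the approximate equivalence on $G$ is built from the standard-form implementing unitaries $U^\phi, U^\psi$; the required block structure at the endpoints $0, 1$ is obtained because $U^\phi(0), U^\psi(0), U^\phi(1), U^\psi(1)$ are permutation matrices adapted to the block structure of $B$ there.

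\emph{Stage 2.} A finite direct sum $B = \bigoplus_j B_j$ with each $B_j$ a splitting interval algebra is handled by projecting $\phi, \psi$ componentwise, applying Stage~1 to each summand, and assembling the resulting unitaries into a unitary in $\widetilde{B}$.

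\emph{Stage 3.} Write $B = \varinjlim_k B_k$ with each $B_k$ a finite direct sum of splitting interval algebras and connecting morphisms $\iota_{k,\infty} \colon B_k \to B$. Splitting interval algebras are semiprojective, so for any $\epsilon$ and $G$ I can find $k$ and $*$-homomorphisms $\phi', \psi' \colon A \to B_k$ with $\|\iota_{k,\infty} \phi'(a) - \phi(a)\| < \epsilon/3$ and similarly for $\psi$ on $G$. Next, the relation $(\Cu(\phi), \Cu(\psi)) \in U_{F_n}$ is transferred to $(\Cu(\iota_{k,k'} \circ \phi'), \Cu(\iota_{k,k'} \circ \psi')) \in U_{F_n}$ at some larger stage $k' \ge k$, using Lemma~\ref{compact_elements}(i) for the compact elements $x_{i,j}$ and Proposition~\ref{inductive_limits}(ii) to propagate the $\ll$-comparisons among the $y_{i/2^n}$. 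Applying Stage~2 to $B_{k'}$ produces a unitary $u' \in \widetilde{B_{k'}}$ implementing the desired approximate equivalence on $G$, and $u = \iota_{k',\infty}(u') \in \widetilde{B}$ finishes the proof.

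\emph{Main obstacle.} The crux is the end of Stage~1: converting pointwise closeness of the eigenvalue functions into a single unitary $u \in \widetilde{B}$ which respects the boundary block structure of $B$ at $0$ and $1$. A secondary difficulty appears in Stage~3, where the $y_{i/2^n}$ are not compact, so their $\ll$-comparisons cannot be lifted directly by Lemma~\ref{compact_elements}; one must exploit the gap relation $U_{F_{n+1}}^2 \subseteq U_{F_n}$ built into the definition of the basic entourages.
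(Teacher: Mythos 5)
Your three-stage decomposition (single algebra, then direct sums, then inductive limits) mirrors the paper's structure, and Stages~2 and~3 track the paper's Lemma~\ref{au_directsums} and Proposition~\ref{au_limits} closely, including the use of semiprojectivity and the observation that compactness of the $x_{i,j}$ versus non-compactness of the $y_{i/2^n}$ requires different lifting arguments. You also omit the reduction to unital homomorphisms (the paper's Lemma~\ref{au_unitalhomomorphisms}); since Theorem~\ref{homomorphism1} applies to unital maps, this reduction is needed and is not difficult, but it is a gap in your write-up.

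The serious gap is at the end of Stage~1. You assert that ``a unitary $u\in\widetilde{B}$ implementing the approximate equivalence on $G$ is built from the standard-form implementing unitaries $U^\phi,U^\psi$.'' The natural candidate $u=(U^\psi)^*U^\phi$ does produce $\|\phi(f)-u^*\psi(f)u\|$ small for $f\in G$, because the inner diagonal parts differ only by close eigenvalue functions. But $u$ need not lie in $B$: the boundary values $u(0)=U^\psi(0)^*U^\phi(0)$ and $u(1)=U^\psi(1)^*U^\phi(1)$ are products of permutation matrices, and there is no reason they respect the block decomposition at $0$ and $1$ defining $B$. Indeed, when the $\Cu$-data of $\phi$ and $\psi$ merely agree up to $U_{F_n}$, the assignment of eigenvalue functions to boundary blocks may differ between $\phi$ and $\psi$ (an eigenvalue near a block boundary can land in different blocks for $\phi$ and for $\psi$), so the correcting permutation is generally an inter-block permutation and falls outside $\bigoplus_i\M_{p'_i}$ or $\bigoplus_j\M_{q'_j}$. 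Your appeal to ``the required block structure at the endpoints $0,1$'' being ``obtained because $U^\phi(0),\dots$ are permutation matrices adapted to the block structure'' is circular: each of $U^\phi$ and $U^\psi$ individually respects it, but their product in general does not.

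The paper resolves this by a genuinely different device. Rather than attempting to compare $U^\phi$ and $U^\psi$ directly, it first interpolates: it picks continuous functions $\rho_i$ within $1/2^{n-1}$ of both $\lambda_i^\phi$ and $\lambda_i^\psi$, with carefully arranged boundary behavior, and defines new homomorphisms $\phi',\psi'$ using the \emph{same} functions $\rho_i$ but the \emph{original} unitaries $U^\phi,U^\psi$. Then $\Cu(\phi')=\Cu(\psi')$ on the nose, and the crucial ingredient---Proposition~7.3 of \cite{Su}, a classification theorem for such homomorphisms---yields that $\phi'$ and $\psi'$ are approximately unitarily equivalent, producing a unitary that \emph{is} in $B$. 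The final estimate then combines the three small perturbations $\phi\to\phi'$, $\phi'\to u^*\psi'u$, and $\psi'\to\psi$. This invocation of Su's theorem is the deepest step of the proof and cannot be replaced by the elementary manipulation of $U^\phi,U^\psi$ that your proposal suggests.
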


\begin{lemma}\label{au_unitalhomomorphisms}
Let $\mathcal{B}$ be the class of splitting interval algebras. Then $\mathcal{B}$ has the approximate uniqueness property with respect to a splitting interval algebra $A$ (in the category of C*-algebras with arbitrary $\ast$-homomorphisms), if and only if, it has the approximate uniqueness property with respect to $A$ in the category of unital C*-algebras with unital $\ast$-homomorphisms.
\end{lemma}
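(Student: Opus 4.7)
The forward implication is immediate: unital $\ast$-homomorphisms form a subclass of arbitrary $\ast$-homomorphisms, and both the Cuntz-closeness hypothesis and the approximate-equivalence conclusion have the same meaning in either category, so the general property restricts unchanged.

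For the reverse implication, the plan is to reduce to the unital case by working in the corner of $B$ cut by $\phi(1_A)$. Given $\epsilon>0$ and a finite $G\subset A$, apply the unital hypothesis (to the data $G\cup\{1_A\}$ and $\epsilon$) to obtain a finite set $F_0\subset \Cu(A)$, and set $F:=F_0\cup\{[1_A]\}$. For any $\phi,\psi\colon A\to B$ with $B\in\mathcal{B}$ and $(\Cu(\phi),\Cu(\psi))\in U_F$, compactness of $[1_A]$ together with $[1_A]\ll[1_A]$ forces $\Cu(\phi)[1_A]=\Cu(\psi)[1_A]$, i.e., the projections $p_\phi:=\phi(1_A)$ and $p_\psi:=\psi(1_A)$ have the same class in $\Cu(B)$. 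Using that $B$ is stably finite with stable rank one together with the explicit description of $\Cu(B)$ in Theorem \ref{Cuntzs_splitting}, find a unitary $w\in B$ with $w p_\phi w^*=p_\psi$; after replacing $\phi$ by $\Ad(w)\circ\phi$ (which leaves $\Cu(\phi)$ unchanged) one may assume $p_\phi=p_\psi=:p$.

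The crux is the structural observation that the corner $pBp$ is itself a splitting interval algebra. Writing $B=\mathrm{S}_m[\overline{p},\overline{q},r,s]$, the continuous projection-valued function $p(\cdot)\colon[0,1]\to \M_m$ can be trivialized by a continuous path of unitaries in $\M_m$ (the interval is contractible), and tracking the endpoint conditions yields $pBp\cong \mathrm{S}_k[(k^0_i)_i,(k^1_j)_j,r',s']$, where $k$ is the constant interior rank of $p$ and $k^0_i,k^1_j$ record its ranks in the endpoint summands of $B$ (zero-dimensional factors and the corresponding indices being discarded). With this in hand, $\phi$ and $\psi$ become unital $\ast$-homomorphisms $A\to pBp$, and the Cuntz closeness transfers to $\Cu(pBp)$ via the order-embedding $\Cu(pBp)\hookrightarrow\Cu(B)$ induced by the hereditary inclusion. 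The unital hypothesis furnishes a unitary $u\in pBp$ with $\|u\phi(a)u^*-\psi(a)\|<\epsilon$ for all $a\in G$; lifting to $\hat u:=u+(1_B-p)\in B\subset \widetilde{B}$ gives a unitary of $B$ which conjugates $\phi$ to $\psi$ within $\epsilon$ on $G$, because $\phi(a),\psi(a)\in pBp$ forces $\hat u\phi(a)\hat u^*=u\phi(a)u^*$.

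The main obstacle is the structural identification $pBp\cong \mathrm{S}_k[\cdots]$: it requires trivializing the continuous family $p(\cdot)$ by a unitary path in $\M_m$ whose values at $0$ and $1$ are chosen compatibly with the block decompositions of $B(0)$ and $B(1)$, so that the transferred algebra has the correct splitting form. A secondary technical point is the existence of the unitary $w$ conjugating $p_\phi$ to $p_\psi$; in lieu of invoking stable rank one, one may appeal directly to Theorem \ref{Cuntzs_splitting}, since the Cuntz class of a projection in $B$ is a locally constant integer-valued function on $\mathrm{sp}(B)$, and two projections with the same such function are readily seen to be unitarily equivalent in $B$.
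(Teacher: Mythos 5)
Your proposal is correct and follows essentially the same route as the paper: augment the finite set with $[1_A]$ to force $\Cu(\phi)[1_A]=\Cu(\psi)[1_A]$, conjugate so the two unit projections coincide at some $p$, pass to the corner $pBp$ (which is again a splitting interval algebra), apply the unital hypothesis there, and extend the resulting unitary by $1-p$. The only cosmetic differences are that you enlarge $G$ to $G\cup\{1_A\}$ unnecessarily and you conjugate $\phi$ rather than $\psi$; the paper also records the final unitary explicitly as $v(w+1-p)$, which you leave implicit in the ``one may assume'' reduction.
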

\begin{proof}
The implication ($\Rightarrow$) clearly holds. Let us prove the reverse implication $(\Leftarrow)$. Assume that $\mathcal{B}$ has the approximate uniqueness property with respect to a splitting interval algebra $A$ in the category of unital C*-algebras with unital $\ast$-homomorphisms.

Given $\epsilon>0$ and a finite subset $G\subset A$ let $F$ be the finite subset of $\Cu(A)$ given by the approximate uniqueness property of the class $\mathcal{B}$ with respect to $A$ in the category of unital C*-algebras with unital $\ast$-homomorphisms. Set $F\cup \{[1_A]\}=F'$, where $[1_A]$ denotes the Cuntz equivalence class of the unit of $A$. Let us show that the set $F'$ satisfies the conditions in the definition of the approximate uniqueness property of the class $\mathcal{B}$ with respect to $A$ for $\epsilon$ and $G$. 

Consider a splitting interval algebra $B$ and $\ast$-homomorphisms $\phi,\psi\colon A\to B$ such that $(\phi,\psi)\in U_{F'}$. By the definition of the entourage $U_{F'}$, and using that $[1_A]$ is a compact element of $\Cu(A)$ we have that $\Cu(\phi)[1_A]=\Cu(\psi)[1_A]$. This implies that the projections $\phi(1_A)$ and $\psi(1_A)$ are Cuntz equivalent, and hence unitarily equivalent since $B$ has stable rank one. Let $v\in B$ be a unitary such that $\phi(1_A)=v^*\psi(1_A)v$. Consider the $\ast$-homomorphism $\psi'=\Ad(v)\circ \psi$. It follows that $\phi(1_A)=\psi'(1_A)$, and 
\begin{align}\label{pp'}
(\phi,\psi')\in U_{F}.
\end{align}
Let us denote by $p$ the projection $\phi(1_A)$. If we restrict the codomain of the $\ast$-homomorphisms $\phi$ and $\psi'$ to the hereditary subalgebra $pBp$, then the new $\ast$-homomorphisms---which we will denote again by $\phi$ and $\psi'$---are unital and satisfy condition \eqref{pp'}. The hereditary subalgebra $pBp$ is isomorphic to a splitting interval algebra since $p$, being a projection of a splitting interval algebra, can be diagonalize in the matrix algebra over $C[0,1]$ that contains $B$ unitally. Therefore, by the choice of the set $F$ there exists a unitary $w\in pBp$ such that
\[
\|\phi(a)-w^*\psi'(a)w\|<\epsilon,
\]
for all $a\in G$. Set $v(w+1-p)=u$. Then, $u\in B$ is a unitary and 
\[
\|\phi(a)-u^*\psi(a)u\|<\epsilon,
\]
for all $a\in G$.
\end{proof}

\begin{lemma}\label{au_directsums}
Let $A$ be a C*-algebra and let $\mathcal{B}$ be a class of C*-algebras that has the approximate uniqueness property with respect to $A$. Then the class of C*-algebras $\mathcal{B'}$ consisting of the finite direct sums of C*-algebras in $\mathcal{B}$ satisfies the approximate uniqueness property with respect to $A$.
\end{lemma}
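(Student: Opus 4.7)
The plan is to exploit the fact that the Cuntz semigroup splits across direct sums: if $B = B_1 \oplus \cdots \oplus B_n$ with each $B_i \in \mathcal{B}$, then $\Cu(B) = \Cu(B_1) \oplus \cdots \oplus \Cu(B_n)$ and any $\ast$-homomorphism $\phi \colon A \to B$ decomposes as $\phi = \bigoplus_{i=1}^n \phi_i$, where $\phi_i = \pi_i \circ \phi$ with $\pi_i \colon B \to B_i$ the canonical projection; consequently $\Cu(\phi) = (\Cu(\phi_1), \ldots, \Cu(\phi_n))$. Given $\epsilon > 0$ and a finite subset $G \subset A$, I would simply take the set $F' \subset \Cu(A)$ witnessing the approximate uniqueness of $\mathcal{B}$ with respect to $A$ for $\epsilon$ and $G$ and declare it to be the set witnessing the property for $\mathcal{B}'$.

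The first key step is to verify that if $(\Cu(\phi), \Cu(\psi)) \in U_{F'}$ in $\Mor(\Cu(A), \Cu(B))$, then $(\Cu(\phi_i), \Cu(\psi_i)) \in U_{F'}$ in $\Mor(\Cu(A), \Cu(B_i))$ for every $i$. This is immediate from applying the order-preserving projection $\Cu(\pi_i) \colon \Cu(B) \to \Cu(B_i)$ to the inequalities defining membership in $U_{F'}$.

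The second step is to apply the approximate uniqueness property of $\mathcal{B}$ to each pair $(\phi_i, \psi_i) \colon A \to B_i$, producing unitaries $u_i \in \widetilde{B_i}$ with $\|\phi_i(a) - u_i^*\psi_i(a)u_i\| < \epsilon$ for every $a \in G$. Writing $u_i = \lambda_i \cdot 1 + b_i$ with $|\lambda_i|=1$ and $b_i \in B_i$, the tuple $(u_1, \ldots, u_n)$ generally fails to lie in $\widetilde{B}$ because the scalar parts may differ. However, conjugation is invariant under multiplication of each $u_i$ by a unit modulus scalar, so replacing each $u_i$ by $\overline{\lambda_i}\, u_i \in 1 + B_i$ yields unitaries with common scalar part $1$, and their concatenation defines a unitary $u \in \widetilde{B}$ satisfying $\|\phi(a) - u^*\psi(a)u\| = \max_i \|\phi_i(a) - u_i^*\psi_i(a)u_i\| < \epsilon$ for all $a \in G$.

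There is no real obstacle here; the only point requiring a moment's care is the assembly of the local unitaries into a single element of $\widetilde{B}$, which is handled by the scalar normalization just described. The conclusion is that $F'$ (which is just the set $F$ produced for the class $\mathcal{B}$) works uniformly across all finite direct sums, since the number of summands $n$ enters the argument only through a termwise application of the hypothesis.
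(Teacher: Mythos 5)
Your proof is correct and follows essentially the same route as the paper's: take the same witnessing set $F$, push the $U_F$ relation down to each coordinate via $\Cu(B)\cong\bigoplus_i\Cu(B_i)$, apply the hypothesis termwise, normalize the unitaries so that $u_i-1_{\widetilde{B_i}}\in B_i$, and assemble them into a single unitary of $\widetilde{B}$. The only difference is cosmetic---you spell out the scalar normalization explicitly, whereas the paper merely states that the $u_i$ may be so chosen.
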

\begin{proof}
Given $\epsilon>0$ and $G\subset A$ finite, let $F\subset \Cu(A)$ be the finite subset given by the approximate uniqueness property of the class $\mathcal{B}$. Let $(B_i)_{i=1}^n$ be C*-algebras in $\mathcal{B}$, and let $\phi,\psi\colon A\to B=\bigoplus_{i=1}^nB_i$ be *-homomorphisms. It follows that
\[
\phi=(\phi_1,\phi_2,\cdots, \phi_n),\quad \psi=(\psi_1,\psi_2,\cdots, \psi_n),
\]
with $\phi_i,\psi_i\colon A\to B_i$.

Suppose that $(\Cu(\phi), \Cu(\psi))\in U_{F}$. Then, since $\Cu(B)$ is naturally isomorphic to the direct sum $\bigoplus_{i=1}^n\Cu(B_i)$. We have that $(\Cu(\phi_i), \Cu(\psi_i))\in U_{F}$, for every $1\le i\le n$. Hence, by the choice of the set $F\subset \Cu(A)$, there are unitaries $u_i\in \widetilde{B_i}$, $i=1,2,\cdots, n$, such that
\[
\|\phi_i(a)-u_i\psi(a)u_i\|<\epsilon,
\]
for all $a\in G$. Moreover, the unitaries $u_i$ may be taken such that $u_i-1_{\widetilde{B_i}}\in B_i$. Set 
\[
1_{\widetilde{B}}+\sum_{i=1}^n(u_i-1_{\widetilde{B_i}})=u.
\]
Then $u$ is a unitary in $\widetilde{B}$, and
\[
\|\phi(a)-u^*\psi(a)u\|<\epsilon.
\]
This shows that $\mathcal{B'}$ has the approximate uniqueness property with respect to $A$. 
\end{proof}

\begin{proposition}\label{au_limits}
Let $A$ be the universal C*-algebra generated by a finite number of elements satisfying a stable, finite, and bounded set of relations. Let $B$ be C*-algebra that is the inductive limit of the sequence of C*-algebras
\begin{align*}
\xymatrix{
B_1\stackrel{\rho_1}\longrightarrow B_2\stackrel{\rho_2}\longrightarrow B_3\stackrel{\rho_3}\longrightarrow\cdots.
}
\end{align*}
If the class of C*-algebras $\mathcal{B}=\{B_i\}_{i=1}^\infty$ has the approximate uniqueness property with respect to $A$, then the limit algebra $B$ has the approximate uniqueness property with respect to $A$.
\end{proposition}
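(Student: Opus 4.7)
The plan is to exploit the universal property of $A$. Since $A$ is generated by finitely many elements subject to a stable, finite, and bounded set of relations, $A$ is semiprojective in the Loring sense: for every finite subset $G'\subset A$ and every $\epsilon'>0$, any $\ast$-homomorphism $\phi\colon A\to B=\lim_n B_n$ admits, for some sufficiently large $n$, a genuine $\ast$-homomorphism $\phi_n\colon A\to B_n$ with $\|(\rho_{n,\infty}\circ\phi_n)(a)-\phi(a)\|<\epsilon'$ for all $a\in G'$. This is the key tool that reduces the approximate uniqueness question for $B$ to the corresponding question for some $B_n$, where the hypothesis applies directly.

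Given $\epsilon>0$ and a finite subset $G\subset A$, I would first apply the approximate uniqueness of the class $\mathcal{B}=\{B_n\}_{n\ge 1}$ to $\epsilon/3$ and $G$ to produce a finite subset $F_0\subset\Cu(A)$ that works uniformly for every $B_n$. For each pair $a\ll b$ in $F_0$, using the interpolation property of the category $\CCu$ (axiom (ii) in the definition of $\CCu$), I choose $a',b'\in\Cu(A)$ with $a\ll a'\ll b'\ll b$, and define $F\subset\Cu(A)$ to be $F_0$ together with all such $a'$ and $b'$. This augmented set $F$ will be the output of the approximate uniqueness property for $B$.

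Now let $\phi,\psi\colon A\to B$ be $\ast$-homomorphisms with $(\Cu(\phi),\Cu(\psi))\in U_F$. Choose positive elements in $A\otimes\K$ representing each element of $F$, and let $G'\subset A\otimes\K$ be a finite set containing these representatives along with $G$. By semiprojectivity applied to $G'$ with a small enough tolerance, I obtain $\phi_n,\psi_n\colon A\to B_n$ such that $\rho_{n,\infty}\circ\phi_n$ and $\rho_{n,\infty}\circ\psi_n$ agree with $\phi$ and $\psi$ on $G'$ to within $\epsilon/3$. Combined with Proposition \ref{inductive_limits}(ii), the outer slack $a\ll a'$ and $b'\ll b$ allows the entourage condition for $F$ in $\Cu(B)$ to descend to the entourage condition for $F_0$ in $\Cu(B_n)$ at some finite stage $n$. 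The hypothesis on $B_n$ then yields a unitary $u_n\in\widetilde{B_n}$ with $\|\phi_n(a)-u_n^*\psi_n(a)u_n\|<\epsilon/3$ for every $a\in G$. Setting $u=\widetilde{\rho_{n,\infty}}(u_n)\in\widetilde{B}$ and combining the three approximations via the triangle inequality gives $\|\phi(a)-u^*\psi(a)u\|<\epsilon$ for all $a\in G$, as required.

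The hard part will be the precise bookkeeping in the descent step: $(\Cu(\phi),\Cu(\psi))\in U_F$ is a statement in $\Cu(B)$, while what is needed is $(\Cu(\phi_n),\Cu(\psi_n))\in U_{F_0}$ at the finite stage, and semiprojectivity only provides approximate lifts, not exact ones. The interpolation slack $a\ll a'\ll b'\ll b$ must absorb both the error introduced by the approximate lifting of $\phi$ and $\psi$ and the error inherent in descending inequalities in the inductive limit $\Cu(B)=\lim\Cu(B_n)$ to a finite stage. Selecting a single $n$ that handles all finitely many pairs in $F_0$ simultaneously forces one to bundle all the relevant positive elements of $A\otimes\K$ into a single semiprojectivity application, after which the conclusion follows cleanly from the uniform approximate uniqueness property of $\mathcal{B}$.
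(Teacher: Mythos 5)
Your proposal follows essentially the same route as the paper's proof: invoke semiprojectivity of $A$ (Proposition 14.1.2 of Loring's book) to produce approximate lifts $\phi_n,\psi_n\colon A\to B_n$; convert norm-closeness of the lifts to Cuntz-comparability with $\phi,\psi$ (the paper does this via Lemma 2.2 of Kirchberg--R\o{}rdam, applied to representatives $(a-\epsilon')_+\preccurlyeq a$ sitting in the interpolation gaps); compose the resulting entourage relations with the hypothesis $(\Cu(\phi),\Cu(\psi))\in U_F$; descend to a finite stage by Proposition \ref{inductive_limits}(ii); apply the approximate uniqueness of $\mathcal{B}$; push the unitary forward and finish with the triangle inequality. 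The one quantitative correction is the length of the interpolation chain: you take $a\ll a'\ll b'\ll b$ (two intermediates), but the argument actually consumes \emph{four} $\ll$-slots for each pair $a\ll b$ in $F_0$---one to bridge the lifting error for $\phi$, one for the hypothesis entourage $(\Cu(\phi),\Cu(\psi))\in U_F$, one to bridge the lifting error for $\psi$, and one more to furnish the strictly smaller element needed to invoke Proposition \ref{inductive_limits}(ii) for the descent. Accordingly the paper uses a chain $x\ll z_1\ll z_2\ll z_3\ll y$ with three intermediates; with only $a',b'$ you are one step short, though you explicitly flag the bookkeeping here as ``the hard part,'' and this is precisely where the extra interpolant is needed.
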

\begin{proof}
Given $\epsilon>0$ and $G\subset A$ finite, let $F\subset \Cu(A)$ be the finite subset given by the approximate uniqueness property of the class $\mathcal{B}=(B_i)_{i=1}^\infty$. Let $F'\subset \Cu(A)$ be a finite subset such that $F\subseteq F'$, and such that for every $x,y\in F$ with $x\ll y$ there are $z_1,z_2,z_3\in F'$ such that $x\ll z_1\ll z_2\ll z_3\ll y$.  In order to prove that the proposition holds it is enough to show that given $\ast$-homomorphisms $\phi,\psi\colon A\to B$ such that $(\phi, \psi)\in U_{F'}$,
there exists a unitary $u\in \widetilde{B}$ such that
\[
\|\phi(a)-u^*\psi(a)u\|<3\epsilon,
\]
for all $a\in G$.

Let $\phi,\psi\colon A\to B$ be $\ast$-homomorphisms such that 
\begin{align}\label{phipsi}
(\phi, \psi)\in U_{F'}.
\end{align}
By hypothesis the C*-algebra $A$ is the universal C*-algebra generated by a finite number of elements satisfying a stable, finite, and bounded set of relations. Hence, by Proposition 14.1.2 of \cite{LoringBook} there is an integer $k\ge 1$, and $\ast$-homomorphisms $\phi_i,\psi_i\colon A\to B_i$, $i=k,k+1,\cdots$, such that
\begin{align*}
\lim_{i\to \infty} (\rho_{i,\infty}\circ\phi_i)(a)=\phi(a),\quad \lim_{i\to \infty}(\rho_{i,\infty}\circ\psi_i)(a)=\psi(a),
\end{align*} 
for all $a\in A$, where $\rho_{i,\infty}\colon B_i\to B$, $i=1,2,\cdots$, are the $\ast$-homomorphisms induced by the inductive limit. This implies that
\begin{align}\label{limppk}
\begin{aligned}
&\lim_{i\to \infty} ((\rho_{i,\infty}\otimes \id_{\K})\circ(\phi_i\otimes \id_{\K}))(a)=(\phi\otimes \id_{\K})(a),\\
&\lim_{i\to \infty} ((\rho_{i,\infty}\otimes \id_{\K})\circ(\psi_i\otimes \id_{\K}))(a)=(\psi\otimes \id_{\K})(a),
\end{aligned}
\end{align} 
for all $a\in A\otimes \K$, where $\id_{\K}\colon \K\to \K$ denotes the identity operator $\K$. Let us choose $0<\epsilon'<\epsilon/3$, and a finite subset $G'$ of $A\otimes\K$ such that $G\subseteq G'$ (here $G$ is identified with its image under the canonical inclusion $i_A\colon A\to A\otimes K$ given by $i_A(a)=a\otimes e_{1,1}$, $a\in A$), and such that for each $x,y\in F'$ with $x\ll y$ there is $a\in G'$ such that 
\begin{align}\label{xay}
x\ll [(a-\epsilon')_+]\ll [a]\ll y.
\end{align}
Let us see that this is in fact possible. Let $x,y\in F'$ be such that $x\ll y$. Since every element of $\Cu(A)$ is the supremum of a rapidly increasing sequence of elements of $\Cu(A)$ there exists $z\in \Cu(A)$ such that $x\ll z\ll y$. Let $a\in (A\otimes \K)^+$ be such that $[a]=z$. We have 
\[
x\ll [a]=\sup_{\delta>0}[(a-\delta)_+].
\]
Hence, by the definition of the relation $\ll$ there is $\delta>0$ such that $x\ll [(a-\epsilon')_+]\ll [a]$ for all $0<\epsilon'<\delta$. Since $F'$ is finite we can choose $0<\epsilon'<\epsilon$ such that for each $x,y\in F'$ with $x\ll y$ there is $a\in (A\otimes \K)^+$ such that \eqref{xay} holds. The set $G'$ is taking to be the union of the set $G$ and the set of elements $a$ associated to each pair of elements $x,y\in F'$ with $x\ll y$.

By \eqref{limppk} there exists $i\ge k$ such that
\begin{align}\label{inepp}
\begin{aligned}
\|((\rho_{i,\infty}\otimes \id_{\K})\circ(\phi_i\otimes \id_{\K}))(a)-(\phi\otimes \id_{\K})(a)\|<\epsilon',\\
\|((\rho_{i,\infty}\otimes \id_{\K})\circ(\psi_i\otimes \id_{\K}))(a)-(\psi\otimes \id_{\K})(a)\|<\epsilon',
\end{aligned}
\end{align}
for all $a\in G'$. Hence, by Lemma 2.2 of \cite{Kirchberg-Rordam} 
\begin{align}\label{rppa}
\begin{aligned}
\Cu(\rho_{i,\infty}\circ \phi_i)[(a-\epsilon')_+]&= [((\rho_{i,\infty}\circ\phi_i)\otimes \id_{\K}))((a-\epsilon')_+)]\\
&\le [(\phi\otimes \id_{\K})(a)]\\
&=\Cu(\phi)[a],
\end{aligned}
\end{align}
for all $a\in G'$.

Let $x,y\in F'$ be such that $x\ll y$. Then by the choice of $\epsilon'$ and the finite set $G'$ there exists $a\in G'$ such that \eqref{xay} holds. Hence, using \eqref{rppa} we have
\begin{align*}
\Cu(\rho_{i,\infty}\circ \phi_i)(x)\le \Cu(\rho_{i,\infty}\circ \psi_i)[(a-\epsilon')_+]\le \Cu(\phi)[a]\le \Cu(\phi)(y).
\end{align*}
By symmetry,
\begin{align*}
\Cu(\phi)(x)\le\Cu(\rho_{i,\infty}\circ \phi_i)(y).
\end{align*}
Since the preceding inequalities holds for all $x,y\in F'$ with $x\ll y$ we have
\begin{align}\label{riphi}
(\Cu(\rho_{i,\infty}\circ \phi_i), \Cu(\phi))\in U_{F'}.
\end{align}
Similarly, 
\begin{align}\label{ripsi}
(\Cu(\rho_{i,\infty}\circ \psi_i), \Cu(\psi))\in U_{F'}.
\end{align}
Let us show that there is $n\ge i$ such that $(\Cu(\rho_{i,n}\circ \phi_i), \Cu(\rho_{i,n}\circ \phi_i))\in U_{F}$. Let $x,y\in F$ be such that $x\ll y$. By the choice of the finite set $F'$ the elements $x$ and $y$ are in $F'$, and there are $z_1,z_2,z_3\in F'$ such that $x\ll z_1\ll z_2\ll z_3\ll y$. Using \eqref{phipsi}, \eqref{riphi}, and \eqref{ripsi} we have
\[
\Cu(\rho_{i,\infty}\circ \phi_i)(x)\ll \Cu(\rho_{i,\infty}\circ \phi_i)(z_1)\le \Cu(\phi)(z_2)\le \Cu(\psi))(z_3)\le \Cu(\rho_{i,\infty}\circ \psi_i)(y).
\]
By symmetry,
\[
\Cu(\rho_{i,\infty}\circ \psi_i)(x)\ll \Cu(\rho_{i,\infty}\circ \psi_i)(z_1)\le \Cu(\rho_{i,\infty}\circ \phi_i)(y).
\]
This implies by (ii) of Proposition \ref{inductive_limits} that there exists $N\ge i$ such that
\begin{align}\label{xypp}
\begin{aligned}
\Cu(\rho_{i,n}\circ \phi_i)(x)\le \Cu(\rho_{i,n}\circ \psi_i)(y),\\
\Cu(\rho_{i,n}\circ \psi_i)(x)\le \Cu(\rho_{i,n}\circ \phi_i)(y),
\end{aligned}
\end{align}
for all $n\ge N$, where $\rho_{i,n}$ denotes the composition $\rho_{n-1}\circ\rho_{n-2}\circ\cdots\circ\rho_i$. Therefore, since $F$ is finite we can choose $n\ge i$ such that \eqref{xypp} holds simultaneously for all $x,y\in F$ with $x\ll y$. This shows that $(\Cu(\rho_{i,n}\circ \phi_i), \Cu(\rho_{i,n}\circ \phi_i))\in U_{F}$. Hence, there exists a unitary $v$ in the unitization of the C*-algebra $B_n$ such that
\[
\|(\rho_{i,n}\circ \phi_i)(a)-v^*(\rho_{i,n}\circ \psi_i)(a)v\|< \epsilon,
\]
for all $a\in G$. Let $u\in \widetilde{B}$ be the image of $v$ by the unitization of the map $\rho_{n,\infty}$. It follows that $u$ is a unitary and that
\[
\|(\rho_{i,\infty}\circ \phi_i)(a)-u^*(\rho_{i,\infty}\circ \psi_i)(a)u\|< \epsilon,
\] 
for all $a\in G$. Using the triangle inequality, the inequalities in Equation \eqref{inepp}, and the preceding inequality we have
\begin{align*}
\|\phi(a)-u^*\psi(a)u\|&\le\|\phi(a)-(\rho_{i,\infty}\circ \phi_i)(a)\|+\|(\rho_{i,\infty}\circ \phi_i)(a)-u^*(\rho_{i,\infty}\circ \psi_i)(a)u\|+\\
&+\|(\rho_{i,\infty}\circ \psi_i)(a)-\psi(a)\|\\
&< \epsilon'+\epsilon+\epsilon'< 3\epsilon.
\end{align*}
This concludes the proof of the proposition.
\end{proof}

\begin{proof}[Proof of Theorem \ref{Uniqueness}]
Let $A=S_m[\overline{p},\overline{q}, r, s]$. By Lemma \ref{au_unitalhomomorphisms}, Lemma \ref{au_directsums}, and Proposition \ref{au_limits} it is sufficient to prove that the class of splitting interval algebras has the approximate uniqueness property with respect to $A$ in the category of unital C*-algebras with unital $\ast$-homomorphisms. In order to do this, it is enough to show that given $\epsilon>0$ and $G\subset A$ finite we can choose a positive integer $n$ such that;  if $B$ is a splitting interval algebra, and $\phi, \psi\colon A\to B$ are unital $\ast$-homomorphisms such that $(\Cu(\phi), \Cu(\psi))\in U_{F_n}$, then there exists a unitary $u\in B$ such that
\begin{align}\label{unitpp}
\|\phi(f)-u^*\psi(f)u\|<3\epsilon,
\end{align}
where $F_n$ denotes the finite subset of $\Cu(A)$ defined in \eqref{Fn}. Let us show that if $n$ is chosen such that for every $f\in G$ and every $x_1,x_2\in [0,1]$ with $|x_1-x_2|\le 1/2^{n-1}$,
\[
\|f(x_1)-f(x_2)\|< \epsilon,
\]
then the previous statement holds.

Let $B=S_{m'}[\overline{p'}, \overline{q'}, r', s']$, and let $\phi, \psi\colon A\to B$ be unital $\ast$-homomorphisms such that 
\begin{align}\label{Upp}
(\Cu(\phi), \Cu(\psi))\in U_{F_n}.
\end{align}
By Theorem \ref{homomorphism1} and using that the functor $\Cu(\cdot)$ is equal in $\ast$-homomorphisms that are approximately unitarily equivalent we may assume that $\phi$ and $\psi$ have standard form:
\begin{align*}
&\phi(f)=(U^\phi)^*\diag\left(\Lambda_{\overline{\nu^\phi}}(f), \Lambda_{\overline{\omega^\phi}}(f), f\circ\lambda^\phi_1,\cdots, f\circ\lambda^\phi_{\mu^\phi})\right)U^\phi,\\
&(\pi_t\circ\phi)(f)=\diag\left(\Lambda_{\overline{\nu^\phi_t}}(f), \Lambda_{\overline{\omega^\phi_t}}(f), f(\lambda^\phi_{t,1}),\cdots, f(\lambda^\phi_{t,\mu^\phi_t})\right),\text{ if }t\in \mathrm{sp}(B)\setminus (0,1),
\end{align*}
\begin{align*}
&\psi(f)=(U^\psi)^*\diag\left(\Lambda_{\overline{\nu^\psi}}(f), \Lambda_{\overline{\omega^\psi}}(f), f\circ\lambda^\psi_1,\cdots, f\circ\lambda^\psi_{\mu^\psi})\right)U^\psi,\\
&(\pi_t\circ\psi)(f)=\diag\left(\Lambda_{\overline{\nu^\psi_t}}(f), \Lambda_{\overline{\omega^\psi_t}}(f), f(\lambda^\psi_{t,1}),\cdots, f(\lambda^\psi_{t,\mu^\psi_t})\right),\text{ if }t\in \mathrm{sp}(B)\setminus (0,1),
\end{align*}

By \eqref{Upp} we have that $(\Cu(\pi_t\circ\phi), \Cu(\pi_t\circ\psi))\in U_{F_n}$ for every $t\in \mathrm{sp}(B)$. Hence, by (ii) of Lemma \ref{lemma_uniqueness} applied to $\pi_t\circ\phi$ and $\pi_t\circ\psi$ we have
\begin{align*}
(\overline{\nu^\phi}, \overline{\omega^\phi}, \mu^\phi)=(\overline{\nu^\psi}, \overline{\omega^\psi}, \mu^\psi),\quad (\overline{\nu^\phi_t}, \overline{\omega^\phi_t}, \mu^\phi_t)=(\overline{\nu^\psi_t}, \overline{\omega^\psi_t}, \mu^\psi_t),
\end{align*}
for all $t\in \mathrm{sp}(B)\setminus (0,1)$, and 
\begin{align}\label{ll0}
\sup_{t\in [0,1]}|\lambda_i^\phi(t)-\lambda_i^\psi(t)|\le\frac{1}{2^{n-1}},
\end{align}
for $1\le i\le \mu^\phi (=\mu^\psi)$.

Let us choose continuous functions $\rho_i\colon [0,1]\to [0,1]$, $i=1,2\cdots, \mu^\phi (=\mu^\psi)$, such that
\begin{align}\label{llrho}
|\lambda_i^\phi(t)-\rho_i(t)|\le\frac{1}{2^{n-1}},\quad |\lambda_i^\psi(t)-\rho_i(t)|\le\frac{1}{2^{n-1}},
\end{align}
for $1\le i\le \mu^\phi (=\mu^\psi)$, such that $\rho_i(0)$ is equal to $0$ or $1$ if either $\lambda_i^\phi(0)$ or $\lambda_i^\psi(0)$ is equal to $0$ or $1$, and such that $\rho_i(1)$ is equal to $0$ or $1$ if either $\lambda_i^\phi(1)$ or $\lambda_i^\psi(1)$ is equal to $0$ or $1$. Let $\phi'$ and $\psi'$ be the $\ast$-homomorphisms defined by:
\begin{align*}
&\phi'(f)=(U^\phi)^*\diag\left(\Lambda_{\overline{\nu^\phi}}(f), \Lambda_{\overline{\omega^\phi}}(f), f\circ\rho_1,\cdots, f\circ\rho_{\mu^\phi})\right)U^\phi,\\
&\psi'(f)=(U^\psi)^*\diag\left(\Lambda_{\overline{\nu^\psi}}(f), \Lambda_{\overline{\omega^\psi}}(f), f\circ\rho_1,\cdots, f\circ\rho_{\mu^\psi})\right)U^\psi,
\end{align*}
for $f\in A$. Note that by the choice of the maps $\rho_i$ the images of the $\phi'$ and $\psi'$ are contain in $B$, and 
\begin{align*}
&(\overline{\nu^{\phi'}}, \overline{\omega^{\phi'}}, \mu^{\phi'})=(\overline{\nu^\phi}, \overline{\omega^\phi}, \mu^\phi)=(\overline{\nu^\psi}, \overline{\omega^\psi}, \mu^\psi)=(\overline{\nu^{\psi'}}, \overline{\omega^{\psi'}}, \mu^{\psi'}),\\
&(\overline{\nu^{\phi'}_t}, \overline{\omega^{\phi'}_t}, \mu^{\phi'}_t)=(\overline{\nu^\phi_t}, \overline{\omega^\phi_t}, \mu^\phi_t)=(\overline{\nu^\psi_t}, \overline{\omega^\psi_t}, \mu^\psi_t)=(\overline{\nu^{\psi'}_t}, \overline{\omega^{\psi'}_t}, \mu^{\psi'}_t),
\end{align*}
for all $t\in \mathrm{sp}(B)\setminus (0,1)$. This implies---as in the proof of Theorem \ref{homomorphism1}---that $\Cu(\phi')=\Cu(\psi')$, whence $\mathrm{K}_0(\phi')=\mathrm{K}_0(\psi')$. By Proposition 7.3 of \cite{Su} the $\ast$-homomorphisms $\phi$ and $\psi$ are approximately unitarly equivalent. It follows that there exists a unitary $u\in B$ such that
\begin{align}\label{p'p'}
\|\phi'(f)-u^*\psi(f)u\|<\epsilon,
\end{align}
for all $f\in G$. By the choice of $n$ and by \eqref{llrho} we have
\begin{align*}
\sup_{t\in [0,1]}|f(\lambda^\phi_i(t))-f(\rho_i(t))|<\epsilon,\quad \sup_{t\in [0,1]}|f(\lambda^\psi_i(t))-f(\rho_i(t))|<\epsilon,
\end{align*}
for all $f\in G$, and $1\le i\le \mu^\phi$.
Hence,
\begin{align*}
\|\phi(f)-\phi'(f)\|<\epsilon, \quad \|\psi(f)-\psi'(f)\|<\epsilon.
\end{align*}
These inequalities together with \eqref{p'p'} imply that
\[
\|\phi(f)-u^*\psi(f)u\|<3\epsilon,
\]
for all $f\in G$.
\end{proof}

\section{Approximate lifting}

\begin{theorem}\label{Existence}
Let $A$ be a splitting interval algebra and let $B$ be an inductive limit of finite direct sums of splitting interval algebras. Let $s_B$ be a strictly positive element of $B$, and let $F\subset \Cu(A)$ be a finite subset. If $\alpha\colon \Cu(A)\to \Cu(B)$ is a morphism in the category $\CCu$ such that $\alpha[1_A]\le [s_B]$, then there exists a $\ast$-homomorphism $\phi\colon A\to B$ such that 
\[
(\alpha, \Cu(\phi))\in U_F.
\]
\end{theorem}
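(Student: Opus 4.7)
\emph{Preliminary reductions.} By Proposition \ref{basis}, the entourages $(U_{F_n})_{n\in\N}$ form a basis for the uniform structure on $\Mor(\Cu(A),\Cu(B))$, so it suffices to treat $F=F_n$ for an arbitrary positive integer $n$. The plan is then to reduce twice: first, write $B=\lim_j B_j$ with each $B_j$ a finite direct sum of splitting interval algebras and $\rho_{j,\infty}\colon B_j\to B$ the natural maps. Since $\Cu$ preserves sequential inductive limits, each compact $x_{i,j}\in F_n$ has compact image $\alpha(x_{i,j})\in\Cu(B)$, which by Lemma \ref{compact_elements} lifts to a compact element of $\Cu(B_j)$ for some $j$, while each non-compact $y_{k/2^n}$ can be approximated from below by elements of $\Cu(B_j)$ using Proposition \ref{inductive_limits}(i). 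These approximations, together with bookkeeping analogous to that of Proposition \ref{au_limits}, allow us to work in $B_j$ instead of $B$ at the cost of a slightly weaker approximation absorbed in the slack of $U_{F_n}$. Second, since $\Cu$ commutes with finite direct sums, we may split $\alpha$ into components and reduce further to the case in which $B$ is itself a single splitting interval algebra.

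\emph{Construction for $B$ a splitting interval algebra.} Assume $B=S_{m'}[\overline{p'},\overline{q'},r',s']$. By Theorem \ref{homomorphism1} it suffices to construct $\phi$ in standard form, specifying tuples $\overline{\nu}\in\N^r$, $\overline{\omega}\in\N^s$ (each with a zero component), continuous non-decreasing functions $\lambda_1,\ldots,\lambda_\mu\colon[0,1]\to[0,1]$, and a unitary $U\in\M_{m'}(\mathrm{C}[0,1])$ whose boundary values $U(0),U(1)$ are permutation matrices realizing the correct branching at the endpoints of $\mathrm{sp}(B)$. The data will be read off from $\alpha$: for each $t\in\mathrm{sp}(B)$, $\Cu(\pi_t)\circ\alpha$ is a Cuntz morphism from $\Cu(A)$ into $\N\cup\{\infty\}$, and Lemma \ref{Cuphi} encodes the tuple $(\mu_t,\overline{\lambda_t},\overline{\nu_t},\overline{\omega_t})$ that the representation $\pi_t\circ\phi$ is required to realize at $t$. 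Via Theorem \ref{Cuntzs_splitting}, this amounts to inspecting the lower semicontinuous integer-valued rank functions $\alpha(x_{i,j}),\alpha(y_{k/2^n})\in\Cu(B)$ on $\mathrm{sp}(B)$, from which one extracts the boundary multiplicities $\overline{\nu_{0_k}},\overline{\omega_{1_\ell}}$ and, at each interior $t\in(0,1)$, the distribution of the values $\lambda_{t,i}$.

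\emph{Main obstacle and verification.} The principal difficulty is to choose the functions $\lambda_i$ continuously on $[0,1]$ with boundary values compatible with the multiplicities prescribed by $\alpha$ at the boundary points of $\mathrm{sp}(B)$. This is a continuous selection problem governed by the lower semicontinuity of $\alpha(\mathds{1}_{(s,1)})\in\Cu(B)$ and the boundary inequalities of Theorem \ref{Cuntzs_splitting}; the slack of size $1/2^n$ intrinsic to $U_{F_n}$, together with the dyadic structure of $F_n$, provides just enough room to interpolate between a discrete approximation on the grid $\{k/2^n\}$ and a genuinely continuous choice of $\lambda_i$'s, in a manner reminiscent of the explicit construction of positive elements with prescribed rank in the surjectivity argument of Theorem \ref{Cuntzs_splitting}. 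The unitary $U$ is then obtained by connecting $U(0)$ and $U(1)$, chosen to effect the branching of the ordered eigenvalue tuples into the factors at each endpoint, through a path in the unitary group of $\M_{m'}$. Once $\phi$ is defined, Lemma \ref{Cuphi} expresses $\Cu(\phi)$ in the same closed form as $\alpha$ up to our controlled approximation, and verifying $(\alpha,\Cu(\phi))\in U_{F_n}$ reduces to the defining inequalities on the pairs $a\ll b$ in $F_n$, which hold by construction, together with the built-in bound $\alpha[1_A]\le[s_B]$ ensuring the image of $\phi$ fits inside $B$.
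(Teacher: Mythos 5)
Your outline follows the same architecture as the paper's proof: reduce via Proposition~\ref{basis} to $F=F_n$; reduce via Lemma~\ref{compact_elements} and Proposition~\ref{inductive_limits} to a finite stage $B_j$; reduce to a single splitting interval algebra; and finally construct a $\ast$-homomorphism in standard form realizing the prescribed data. The reduction steps are stated in essentially the same way as the paper (the paper carries them out as the first half of the proof of Theorem~\ref{Existence}, passing from $\alpha$ to a map $\widetilde\alpha\colon F_N\to\Cu(B_{k'})$). However, your ``Main obstacle and verification'' paragraph is where the paper has two nontrivial lemmas (Lemmas~\ref{app-lif1} and~\ref{app-lif2}), and your sketch does not bridge the gap.

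The genuine gap is the actual construction of the continuous eigenvalue functions $\lambda_1,\ldots,\lambda_\mu$ and the multiplicity tuples $\overline\nu,\overline\omega$ from a map that is only given on the finite set $F_n$. You describe this as ``a continuous selection problem'' for which ``the dyadic slack provides just enough room,'' but this is an assertion, not an argument. The paper's proof of Lemma~\ref{app-lif1} needs several specific ingredients that do not appear in your outline and are not obviously replaceable: (1) an almost-unperforation/cancellation argument (Proposition~2.2 of Perera--Toms and Theorem~4.3 of R\o rdam--Winter) to extract a rapidly increasing chain $z_{(2^n-1)/2^n}\ll\cdots\ll z_0\ll\alpha(x_{i',j'})$ in $\Cu(B)$ from the given values $\alpha(y_{k/2^n})$, since $\Cu(B)$ is not a cancellative monoid; (2) Lemma~4 of Robert--Santiago, which produces a single positive element $a$ with $[(a-k/2^n)_+]$ wedged between prescribed Cuntz classes; (3) the Hilbert-module lemma (Theorem~3 of Coward--Elliott--Ivanescu) and Thomsen's diagonalization theorem to replace $a$ by a diagonal element $\diag(\lambda_1,\ldots,\lambda_{\mu})$ with \emph{continuous non-decreasing} entries. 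Without (2) and (3) you have no mechanism to turn the finite family of rank functions $\alpha(y_{k/2^n})$ into continuous $\lambda_i$'s. You also do not explain why the multiplicities $\nu_i,\omega_j$ defined from $\alpha$ via Lemma~\ref{Cuphi} are nonnegative and, together with $\mu=\rank(p)$, add up to $m'$; the paper needs the arithmetic constraints (v) and (vii) of Lemma~\ref{app-lif1} for this, and these constraints are themselves something one must verify transfer from $\alpha$ to the finite stage $\widetilde\alpha$ (via Lemma~\ref{compact_elements}). Finally, the boundary matching at $\{0_1,\ldots,0_{r'}\}\cup\{1_1,\ldots,1_{s'}\}$ is not a single interpolation; the paper proves Lemma~\ref{app-lif2} by first lifting separately into $\M_{m'}(\mathrm{C}[0,1])$ and into the two endpoint fibers $\bigoplus\M_{p'_i}$, $\bigoplus\M_{q'_j}$, and then showing via the uniqueness analysis (Lemma~\ref{lemma_uniqueness}(ii)) that the resulting eigenvalue patterns agree within $1/2^{n-2}$, so that they can be perturbed to match exactly and glued by a unitary path. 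That gluing step is where the loss from $U_{F_n}$ to $U_{F_{n-3}}$ comes from, and it is not addressed in your sketch.

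In short: the reductions are fine and align with the paper, but the heart of the theorem --- the construction at the building-block level --- is not carried out. You should either reproduce the content of Lemmas~\ref{app-lif1} and~\ref{app-lif2} or supply a different concrete mechanism (with explicit use of cancellation, Robert--Santiago's interpolation lemma, and Thomsen diagonalization, or substitutes for them) for producing the tuple $(\overline\nu,\overline\omega,\lambda_1,\ldots,\lambda_\mu)$ and verifying that the resulting $\phi$ lands in $B$ and satisfies $(\alpha,\Cu(\phi))\in U_{F_n}$.
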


Let $A=\mathrm{S}_m[\overline{p},\overline{q},r,s]$. For each $n=1,2,\cdots$, let us consider the finite subset $F_n\subset\Cu(A)$ given by
\begin{align*}
F_n=\{x_{i,j} : 1\le i\le r, 1\le j\le s\}\cup \{y_{i/2^n}: 1\le i\le 2^n\}.
\end{align*}
where 
\begin{align*}
&x_{i,j}=\mathds{1}_{\{0_i\}\cup (0,1)\cup \{1_j\}},\\
& y_t=\mathds{1}_{(t,1)\cup \{1_1\}},\quad y_1=0,
\end{align*}  
(Here we are identifying $\Cu(A)$ with the semigroup in the right hand side of Equation \eqref{cuntz_splitting}.)

\begin{lemma}\label{app-lif1}
Let $A=\mathrm{S}_m[\overline{p},\overline{q},r,s]$ and let $B$ be either a direct sum matrix algebras or a matrix algebra over the algebra of continuous functions on the interval $[0,1]$. Let $\alpha\colon F_n\subset \Cu(A)\to \Cu(B)$ be a map such that for every $1\le i,i'\le r$, $1\le j,j'\le s$, and $1\le k\le 2^n-1$
\begin{itemize}
\item[(i)] $\alpha(x_{i,j})$ is compact;

\item[(ii)] $\alpha(x_{i,1})\ll\alpha(y_{k/2^n})+\alpha(x_{i,j})$;

\item[(iii)] $\alpha(y_{(k+1)/2^n})\ll \alpha(y_{k/2^n})$;

\item[(iv)] $\alpha(y_0)\le \alpha(x_{i,1})$;

\item[(v)] $\alpha(x_{i,j})+\alpha(x_{i',j'})=\alpha(x_{i,j'})+\alpha(x_{i',j})$;

\item[(vi)] $\alpha(x_{i,j})\le \alpha[1_A]=[1_B]$;

\item[(vii)]
\begin{align*}
\sum_{i=1}^rp_i\alpha(x_{i,j'})+\sum_{j=1}^sq_j\alpha(x_{i',j})=\alpha[1_A]+(2m-1)\alpha(x_{i',j'}).
\end{align*}
\end{itemize}
Then, there exits a unital $\ast$-homomorphism $\phi\colon A\to B$ such that
\begin{align*}
(\alpha,\Cu(\phi))\in U_{F_n}.
\end{align*} 
\end{lemma}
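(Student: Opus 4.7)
The plan is to reverse-engineer, from the finite data $\alpha|_{F_n}$, the standard-form parameters $(\overline{\nu},\overline{\omega},\mu,\lambda_1,\ldots,\lambda_\mu)$ of a candidate $\ast$-homomorphism $\phi$ as in Theorem \ref{homomorphism1}, and then apply Lemma \ref{Cuphi} to verify that $\Cu(\phi)$ agrees with $\alpha$ on $F_n$. The argument splits according to the structure of $B$: a finite direct sum of matrix algebras reduces to a single $\M_{m'}$ by restricting $\alpha$ componentwise, leaving the two essential cases $B=\M_{m'}$ and $B=\M_{m'}(\mathrm{C}[0,1])$.

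In the case $B=\M_{m'}$, all values of $\alpha$ are integers. Condition (v) ensures that $\alpha(x_{i,j})$ decomposes as $a_i+b_j$ for some $a_i,b_j\in\N$, and normalising by subtracting the minima I obtain $\alpha(x_{i,j})=\nu_i+\omega_j+\mu$ with $\nu_i,\omega_j,\mu\geq 0$ and $\nu_{i_0}=\omega_{j_0}=0$ for some indices $i_0,j_0$. Condition (vii), combined with $\alpha[1_A]=[1_B]=m'$, yields the unitality identity required for the candidate $\phi$ to land in $\M_{m'}$ (and to do so unitally). Setting $\omega_1:=\alpha(x_{i_0,1})-\mu$ (non-negative by condition (ii) at $(i,j)=(i_0,j_0)$) and $N_k:=\alpha(y_{k/2^n})-\omega_1$, conditions (ii)--(iv) give the chain $0\leq N_{2^n-1}\leq\cdots\leq N_0\leq\mu$. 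I then choose $0\leq\lambda_1\leq\cdots\leq\lambda_\mu\leq 1$ by placing $N_{k-1}-N_k$ of them in each interval $((k-1)/2^n,k/2^n]$ and the remaining $\mu-N_0$ of them at $0$; the formula
\[
\phi(f):=\diag\bigl(\Lambda_{\overline{\nu}}(f),\Lambda_{\overline{\omega}}(f),f(\lambda_1),\ldots,f(\lambda_\mu)\bigr)
\]
then defines a unital $\ast$-homomorphism $A\to\M_{m'}$, and Lemma \ref{Cuphi} verifies that $\Cu(\phi)(z)=\alpha(z)$ for every $z\in F_n$, whence $(\alpha,\Cu(\phi))\in U_{F_n}$ trivially.

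For $B=\M_{m'}(\mathrm{C}[0,1])$, the same construction is applied pointwise via $\alpha_t:=\Cu(\delta_t)\circ\alpha$. The compactness condition (i) forces $\alpha(x_{i,j})$ to be a constant function on $[0,1]$, since the compact elements of $\Cu(\M_{m'}(\mathrm{C}[0,1]))$ (i.e., of $\mathrm{Lsc}([0,1],\N\cup\infty)$) are exactly the constants; hence the extracted tuples $\overline{\nu},\overline{\omega}$, the integer $\mu$, and the constant $\omega_1$ are all independent of $t$. The counts $N_k(t):=\alpha(y_{k/2^n})(t)-\omega_1$ are lower semicontinuous integer-valued functions of $t$, decreasing in $k$, and bounded above by $\mu$. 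The main obstacle is now the continuous lifting step: I must construct continuous non-crossing functions $\lambda_1\leq\cdots\leq\lambda_\mu\colon[0,1]\to[0,1]$ such that at every $t$ and every dyadic level,
\[
N_{k+1}(t)\leq\bigl|\{l:\lambda_l(t)>k/2^n\}\bigr|\leq N_{k-1}(t),
\]
which is the two-sided bound permitted by the tolerance built into $U_{F_n}$ (and which is weaker than the pointwise exact equality used in the first case). Because $N_k(t)$ is lower semicontinuous it can only jump upward as $t$ varies, and this is precisely the direction compatible with threading non-crossing continuous graphs through the prescribed dyadic levels; the bounds can thus be met by piecewise-linear interpolation of the $\lambda_l$ on the subintervals determined by the jump points of the $N_k$, with the boundary values $\lambda_l(0)$ and $\lambda_l(1)$ chosen on the dyadic grid $\{i/2^n\}$. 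Setting $\phi(f)(t):=\diag(\Lambda_{\overline{\nu}}(f),\Lambda_{\overline{\omega}}(f),f(\lambda_1(t)),\ldots,f(\lambda_\mu(t)))$ then yields the required unital $\ast$-homomorphism and completes the proof.
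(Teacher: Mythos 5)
Your reduction in the $B=\M_{m'}$ case is correct and elementary: extracting $\overline{\nu},\overline{\omega},\mu,\omega_1$ from (v), (vii), and the compactness of $[1_B]$, and then placing $\lambda_l$'s to match the integer counts $N_k$ exactly, is sound and in that case yields $\Cu(\phi)=\alpha$ on all of $F_n$. The gap is in the $B=\M_{m'}(\mathrm{C}[0,1])$ case, and it is at the technical heart of the lemma.

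You record that $N_k(t)=\alpha(y_{k/2^n})(t)-\omega_1$ is lower semicontinuous, decreasing in $k$, and bounded by $\mu$, and then assert that continuous ordered $\lambda_l$ threading the two-sided dyadic bounds exist ``by piecewise-linear interpolation on the subintervals determined by the jump points of the $N_k$.'' This is not a proof, and in fact the statement you are trying to prove is false under the hypotheses you actually use. Take $\mu=1$ and $N_k\equiv\mathds{1}_{[0,1]\setminus\{1/2\}}$ for every $k$: this is lower semicontinuous, constant in $k$, bounded by $\mu$, yet the upper bound at $t=1/2$ forces $\lambda_1(1/2)\le 1/2^n$ while the lower bound for $t\ne 1/2$ forces $\lambda_1(t)>(2^n-2)/2^n$, so for $n\ge 2$ no continuous $\lambda_1$ exists. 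What rescues the lemma is hypothesis (iii), $\alpha(y_{(k+1)/2^n})\ll\alpha(y_{k/2^n})$, which after subtracting the compact element $\omega_1$ gives $N_{k+1}\ll N_k$ in $\mathrm{Lsc}([0,1],\N\cup\infty)$, i.e.\ $\overline{\{N_{k+1}\ge j\}}\subseteq\{N_k\ge j\}$ for every $j$. Your argument never invokes this compact nesting beyond the pointwise inequality $N_{k+1}\le N_k$; moreover the ``jump points'' of an arbitrary lower semicontinuous $\N$-valued function need not be finite or isolated, so piecewise-linear interpolation on complementary subintervals is not a well-defined construction. With the $\ll$-nesting one can indeed build the $\lambda_l$ by an iterated Urysohn argument on the open sets $\{N_k\ge j\}$, arranged to respect the ordering in $j$, but this has to be carried out. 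That interpolation is exactly what the paper obtains from the machinery it cites: cancellation (Perera--Toms, R{\o}rdam--Winter) to extract a $\ll$-increasing chain $z_{(2^n-1)/2^n}\ll\cdots\ll z_0\ll\alpha(x_{i',j'})$, Lemma 4 of Robert--Santiago to realize that chain as cut-downs $[(a-k/2^n)_+]$ of a single positive contraction, and Thomsen's diagonalization theorem to produce the continuous ordered eigenvalue functions. Your route, if you supplied the missing Urysohn-type step driven by (iii), would be a more self-contained alternative to those citations; as written it omits the one hypothesis that makes the claim true. (A minor point: the boundary values $\lambda_l(0),\lambda_l(1)$ need not lie on the dyadic grid here, since $\M_{m'}(\mathrm{C}[0,1])$ imposes no endpoint conditions; that concern belongs to Lemma \ref{app-lif2}.)
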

\begin{proof}
Let $B=\M_{m'}(\mathrm{C}[0,1])$ and let $\alpha\colon F_n\subset\Cu(A)\to \Cu(B)$ be as in the statement of the lemma. Let us choose indexes $i'$ and $j'$ such that
\[
\alpha(x_{i',j'})(0)\le \alpha(x_{i,j})(0)
\]
for all $1\le i\le r$, and $1\le j\le s$. Note that the inequality above also holds if the point $0$ is replaced by any point of the closed interval $[0,1]$ since by condition (i) of the lemma $\alpha(x_{i,j})$ and $\alpha(x_{i',j'})$ are compact elements of $\Cu(B)$ ($\cong \mathrm{Lsc}([0,1],\N\cup\infty))$, and hence, they are the constant functions. By condition (ii) of the lemma \begin{align*}
\alpha(x_{i',1})\ll\alpha(y_{k/2^n})+\alpha(x_{i',j'}),
\end{align*}
for every $0\le k\le 2^n-1$. Since $\alpha(x_{i',1})$ is a compact element of $\Cu(B)$ and $B$ has stable rank one, $\alpha(x_{i',1})$ is the class of a projection of $B\otimes \K$. It follows by Proposition 2.2 of \cite{Perera-Toms} that for each $1\le k\le 2^n-1$ there exists an element $z_{k/2^n}$ such that
\begin{align}\label{zxyx}
z_{k/2^n} +\alpha(x_{i',1})=\alpha(y_{k/2^n})+\alpha(x_{i',j'}).
\end{align}
By conditions (iii) and (iv) we have
\[
\alpha(y_0)\ll \alpha(x_{i',1}),\quad \alpha(y_{(k+1)/2^n})\ll \alpha(y_{k/2^n}).
\]
Hence,
\begin{align*}
z_0+\alpha(x_{i',1})&\stackrel{\eqref{zxyx}}{=}\alpha(y_0)+\alpha(x_{i',j'})\\
&\ll \alpha(x_{i',1})+\alpha(x_{i',j'}),
\end{align*}
\begin{align*}
z_{(k+1)/2^n} +\alpha(x_{i',1})&\stackrel{\eqref{zxyx}}{=}\alpha(y_{(k+1)/2^n})+\alpha(x_{i',j'})\\
&\ll \alpha(y_{k/2^n})+\alpha(x_{i',j'})\\
&\stackrel{\eqref{zxyx}}{=} z_{k/2^n} +\alpha(x_{i',1}).
\end{align*}
More briefly,
\[
z_0+\alpha(x_{i',1})\ll \alpha(x_{i',j'})+\alpha(x_{i',1}),\quad z_{(k+1)/2^n} +\alpha(f_{i',1})\ll z_{k/2^n} +\alpha(f_{i',1}).
\]
These inequalities imply by Theorem 4.3 of \cite{Rordam-Winter} that $z_{(k+1)/2^n}\ll z_{k/2^n}$, and $z_0\ll\alpha(x_{i',j'})$. Thus, the elements $(z_{k/2^n})_{k=0}^{2^n-1}$ are such that
\begin{align}\label{azs}
z_{(2^n-1)/2^n}\ll z_{(2^n-2)/2^n}\ll \cdots \ll z_{1/2^n}\ll z_0\ll \alpha(x_{i',j'}).
\end{align}
By Lemma 4 of \cite{Robert-Santiago} there exists a positive contraction $a\in B\otimes \K$ such that
\begin{align}\label{zzaz}
z_0=[a],\quad z_{(k+1)/2^n}\ll [(a-k/2^n)]\ll z_{k/2^n},
\end{align}
for every $0\le k\le 2^n-1$.

By condition (vi) we have  $\alpha(x_{i',j'})\le \alpha[1_A]=[1_B]$. Hence, since $B$ has stable rank one and $\alpha(x_{i',j'})$ is compact there exists a projection $p\in B$ such that $[p]=\alpha(x_{i',j'})$. Moreover, since $p$ is unitarily equivalent to a trivial projection we may assume that $p=\diag(1,\cdots,1,0,\cdots,0)$. By the choice of $a$ and by \eqref{azs},
\begin{align*}
[a]=z_0\ll \alpha(x_{i',j'})=[p].
\end{align*}
Hence, by Theorem 3 of \cite{Coward-Elliott-Ivanescu} (see also Proposition 1 of \cite{Ciuperca-Elliott-Santiago}) there is a positive element $b\in pBp$ such that $a=x^*x$, and $b=xx^*$ for some element $x\in B$. By (i) of Lemma 5 of \cite{Ciuperca-Elliott-Santiago} this implies that 
\[
[(b-k/2^n)_+]=[(a-k/2^n)_+],
\]
for every $0\le k\le 2^n-1$. Since $b\in p\M_{m'}(\mathrm{C}[0,1])p=\M_{\rank(p)}(\mathrm{C}[0,1])$, by Lemma 1.1 and Theorem 1.2 of \cite{Thomsen} there are continuous functions $\lambda_{i}\colon [0,1]\to [0,1]$, $i=1,2,\cdots,\rank(p)$, such that $\lambda_1\ge \lambda_2\ge \cdots\ge \lambda_{\rank(p)}$, and such that the positive element 
\[
c=\diag(\lambda_1,\lambda_2,\cdots, \lambda_{\rank(p)}, 0,\cdots,0)\in \M_{m'}(\mathrm{C}[0,1]),
\] 
is approximately unitarly equivalent to $b$. This implies that for each $t\in \R^+$ the positive elements $(a-t)_+$ and $(b-t)_+$ are approximately unitarly equivalent, and hence Cuntz equivalent. Therefore, we have
\begin{align}\label{cba}
[(c-k/2^n)_+]=[(b-k/2^n)_+]=[(a-k/2^n)_+],
\end{align}
for all $0\le k\le 2^n-1$.

For each $1\le i\le r$, and $1\le j\le s$ define
\begin{align}\label{munuomega}
\begin{aligned}
&\mu:=\rank(p),\\
&\nu_i:=\alpha(x_{i,j'})(0)-\alpha(x_{i',j'})(0),\\
&\omega_j:=\alpha(x_{i',j})(0)-\alpha(x_{i',j'})(0).
\end{aligned}
\end{align}
By condition (vii) of the lemma
\begin{align*}
&\sum_{i=1}^rp_i\nu_i+\sum_{j=1}^sq_j\omega_j+\mu=\\
&=\sum_{i=1}^rp_i(\alpha(x_{i,j'})(0)-\alpha(x_{i',j'})(0))+\sum_{j=1}^sq_j(\alpha(x_{i',j})(0)-\alpha(x_{i',j'})(0))+\alpha(x_{i',j'})(0)\\
&=\sum_{i=1}^rp_i\alpha(x_{i,j'})(0)+\sum_{j=1}^sq_j\alpha(x_{i',j})(0)-(2m-1)\alpha(x_{i',j'})(0)\\
&=\alpha[1_A](0)\\
&=[1_B](0)\\
&=m'.
\end{align*}
Let us consider the map
\begin{align*}
\phi(f)=\diag(\Lambda_{\overline{\nu}}(f),\Lambda_{\overline{\omega}}(f),f\circ\lambda_1, f\circ\lambda_2,\cdots, f\circ\lambda_{\mu}).
\end{align*}
It is clear that $\phi$ is a $\ast$-homomorphism. Also, by the previous computation $\phi$ maps $A$ into $\M_{m'}(\mathrm{C}[0,1])$. Let us show that $(\alpha, \Cu(\phi)\in U_{F_n}$.

By Lemma \ref{Cuphi} the values of the morphism $\Cu(\phi)$ at the elements $x_{i,j}$ and $y_{k/2^n}$ are given by the formulas
\begin{align}\label{xmnylw}
\Cu(\phi)(x_{i,j})(t)=\mu+\nu_i+\omega_j,\quad \Cu(\phi)(y_{k/2^n})(t)=|\{i:\lambda_i(t)>k/2^n\}|+w_1.
\end{align}
We have
\begin{align*}
\Cu(\phi)(x_{i,j})(t)&\stackrel{\eqref{xmnylw}}{=}\mu+\nu_i+\omega_j\\
&\stackrel{\eqref{munuomega}}{=}\alpha(x_{i',j'})(0)+\alpha(x_{i,j'})(0)-\alpha(x_{i',j'})(0)+\alpha(x_{i',j})(0)-\alpha(x_{i',j'})(0)\\
&\stackrel{\mathrm{(v)}}{=}\alpha(x_{i,j'})(0)+\alpha(x_{i',j})(0)-\alpha(x_{i',j'})(0)\\
&=\alpha(x_{i,j})(0)+\alpha(x_{i',j'})(0)-\alpha(x_{i',j'})(0)\\
&=\alpha(x_{i,j})(0)=\alpha(x_{i,j})(t),
\end{align*}
\begin{align*}
\Cu(\phi)(y_{(k+1)/2^n})(t)&\le\Cu(\phi)(y_{k/2^n})(t)\stackrel{\eqref{xmnylw}}{=}|\{i:\lambda_i(t)>k/2^n\}|+w_1\\
&\stackrel{\eqref{munuomega}}{=}\rank((c-k/2^n)_+(t))+\alpha(x_{i',1})(0)-\alpha(x_{i'j'})(0)\\
&=[(c-k/2^n)_+](t)+\alpha(x_{i',1})(t)-\alpha(x_{i',j'})(t)\\
&\stackrel{\eqref{cba},\eqref{zzaz}}{\le}z_{k/2^n}(t)+\alpha(x_{i',1})(t)-\alpha(x_{i',j'})(t)\\
&\stackrel{\eqref{zxyx}}{=}\alpha(y_{k/2^n})(t)+\alpha(x_{i',j'})(t)-\alpha(x_{i',1})(t)+\alpha(x_{i',1})(t)-\alpha(x_{i',j'})(t)\\
&=\alpha(y_{k/2^n})(t),\\
\alpha(y_{(k+1)/2^n})(t)&\stackrel{\eqref{zxyx}}{=} z_{(k+1)/2^n}(t)+\alpha(x_{i',1})(t)-\alpha(x_{i',j'})(t)\\
&\stackrel{\eqref{cba},\eqref{zzaz}}{\le} [(c-k/2^n)_+](t)+\alpha(x_{i',1})(0)-\alpha(x_{i',j'})(0)\\
&=\rank((c-k/2^n)_+(t))+\alpha(x_{i',1})(0)-\alpha(x_{i',j'})(0)\\
&=|\{i:\lambda_i(t)>k/2^n\}|+w_1\\
&\stackrel{\eqref{xmnylw}}{=}\Cu(\phi)(y_{k/2^n})(t),
\end{align*}
for every $t\in [0,1]$ and $0\le k\le 2^n-1$. Hence,
\begin{align*}
\alpha(x_{i,j})=\Cu(\phi)(x_{i,j}),\quad \alpha(y_{(k+1)/2^n})\le \Cu(\phi)(y_{k/2^n}),\quad \Cu(\phi)(y_{(k+1)/2^n})\le \alpha(y_{k/2^n}),
\end{align*}
for all $1\le i\le r$, $1\le j\le s$, and $0\le k\le 2^n-1$. This shows that $(\alpha, \Cu(\phi))\in U_{F_n}$.
A repetition of the previous arguments shows that the lemma holds in the case that $B$ is a direct sum of matrix algebras. 
\end{proof}

\begin{lemma}\label{app-lif2}
Let $A$ and $B$ be splitting interval algebras, with $A=\mathrm{S}_m[\overline{p},\overline{q},r,s]$. Let $\alpha\colon F_n\subset \Cu(A)\to \Cu(B)$ be a map satisfying conditions {\rm (i)} to {\rm (vii)} of the preceding lemma. Then, there exists a $\ast$-homomorphism $\phi\colon A\to B$ such that $(\alpha, \Cu(\phi))\in U_{F_{n-2}}$.
\end{lemma}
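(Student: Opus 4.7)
My plan is to reduce to the preceding Lemma \ref{app-lif1} by decomposing $\alpha$ according to the spectrum of $B$, lifting each component separately, and reassembling via Theorem \ref{homomorphism1}. Write $B=\mathrm{S}_{m'}[\overline{p'},\overline{q'},r',s']$, let $\iota\colon B\hookrightarrow\M_{m'}(\mathrm{C}[0,1])$ denote the natural inclusion, and let $\pi^B_{0_k}\colon B\to\M_{p'_k}$, $\pi^B_{1_l}\colon B\to\M_{q'_l}$ be the boundary irreducible representations from \eqref{irep}. Since conditions (i)--(vii) of Lemma \ref{app-lif1} are phrased purely in terms that any $\Cu$-morphism preserves, they are inherited by $\Cu(\iota)\circ\alpha$, $\Cu(\pi^B_{0_k})\circ\alpha$, and $\Cu(\pi^B_{1_l})\circ\alpha$. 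Applying Lemma \ref{app-lif1} to each yields $\ast$-homomorphisms $\phi^I\colon A\to\M_{m'}(\mathrm{C}[0,1])$, $\phi^{0_k}\colon A\to\M_{p'_k}$, and $\phi^{1_l}\colon A\to\M_{q'_l}$, each $U_{F_n}$-close to its target. I would put $\phi^I$ in standard form via Theorem \ref{homomorphism1}, with tuple data $(\overline\nu,\overline\omega,\mu,\lambda_i)$, and set $\phi^0=\bigoplus_k\phi^{0_k}$, $\phi^1=\bigoplus_l\phi^{1_l}$.

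The crucial step is to match $\phi^I$ with $\phi^0,\phi^1$ at the boundary. Viewed as $\ast$-homomorphisms $A\to\M_{m'}$, both $\delta_0\circ\phi^I$ and $\iota_0\circ\phi^0$ (where $\iota_0\colon\bigoplus_k\M_{p'_k}\hookrightarrow\M_{m'}$ and $\delta_0$ denotes evaluation at $0$) $U_{F_n}$-approximate the common map $z\mapsto\sum_k\alpha(z)(0_k^B)\colon F_n\to\N\cup\{\infty\}$, since the latter equals both $\Cu(\delta_0)\circ\Cu(\iota)\circ\alpha$ and $\Cu(\iota_0)\circ\bigoplus_k\Cu(\pi^B_{0_k})\circ\alpha$. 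By the composition rule \eqref{uniform_1}, their Cuntz morphisms are $U_{F_{n-1}}$-close, so Lemma \ref{lemma_uniqueness}(ii) forces their tuples $(\mu,\overline\nu,\overline\omega)$ to coincide exactly, while the eigenvalue lists $(\lambda_i(0))_{i=1}^\mu$ and $\bigsqcup_k(\lambda_{0_k,j})$ differ by at most $1/2^{n-2}$ after a suitable index matching. An analogous conclusion holds at $t=1$.

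Using the matching bijections at $0$ and $1$, I would perturb each $\lambda_i$ continuously inside small neighborhoods of $\{0,1\}$, replacing $\lambda_i(0)$ and $\lambda_i(1)$ by the matched values from $\phi^0$ and $\phi^1$ while keeping $\sup_t|\lambda_i^{\mathrm{new}}(t)-\lambda_i(t)|\leq 1/2^{n-2}$. I would then choose a unitary $U\in\M_{m'}(\mathrm{C}[0,1])$ whose boundary values $U(0),U(1)$ are the permutation matrices realizing these bijections (so that the diagonal blocks at $t=0$ regroup into $\bigoplus_k\M_{p'_k}$ and similarly at $t=1$), and define
\[
\phi(f):=U^*\diag\bigl(\Lambda_{\overline\nu}(f),\Lambda_{\overline\omega}(f),f\circ\lambda_1^{\mathrm{new}},\ldots,f\circ\lambda_\mu^{\mathrm{new}}\bigr)U,
\]
which by construction takes values in $B$.

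Finally, to verify $(\alpha,\Cu(\phi))\in U_{F_{n-2}}$, I would combine the $U_{F_n}$-closeness of $\Cu(\phi^I)$ to $\Cu(\iota)\circ\alpha$ with the $U_{F_{n-2}}$-closeness of $\Cu(\iota\circ\phi)$ to $\Cu(\phi^I)$, which follows via Lemma \ref{lemma_uniqueness}(i) applied fiberwise on the generators $G$ of \eqref{G} from the uniform bound $|\lambda_i^{\mathrm{new}}-\lambda_i|\leq 1/2^{n-2}$; at each boundary point the approximation is inherited directly from $(\Cu(\pi^B_{0_k})\circ\alpha,\Cu(\phi^{0_k}))\in U_{F_n}$, since by construction $\pi^B_{0_k}\circ\phi$ is unitarily equivalent to $\phi^{0_k}$ (and similarly at $1$). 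The hard part will be the choice of matching bijections between the global eigenvalues $(\lambda_i(0))$ and the boundary eigenvalues $(\lambda_{0_k,j})$: these must simultaneously determine the permutations $U(0),U(1)$ and ensure that the perturbed $\lambda_i^{\mathrm{new}}$, once grouped by the boundary decomposition, respect the block sizes $p'_k,q'_l$ of $B$---a combinatorial compatibility that one checks using the tuple data $(\overline{\nu},\overline{\omega},\mu)$ together with the boundary tuples coming from $\phi^0$ and $\phi^1$.
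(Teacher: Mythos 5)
Your proposal follows essentially the same route as the paper. The paper also decomposes $\alpha$ into the interior component $\Cu(\mathrm{i}_B)\circ\alpha\colon\Cu(A)\to\Cu(\M_{m'}(\mathrm{C}[0,1]))$ together with the two boundary evaluation components, applies Lemma~\ref{app-lif1} to each, compares the fiber data at $0$ and $1$ via a $U_{F_n}^2\subseteq U_{F_{n-1}}$ argument and Lemma~\ref{lemma_uniqueness}(ii) to get the bound $|\lambda_i^\phi(0)-\lambda_i^{\phi_0}|\le 1/2^{n-2}$, perturbs the eigenvalue functions $\lambda_i$ to hit the boundary values exactly, and conjugates by a unitary path $U$ whose endpoints $U(0)$, $U(1)$ are permutations. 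The only real stylistic difference is that you apply Lemma~\ref{app-lif1} separately to each boundary irreducible $\Cu(\pi^B_{0_k})\circ\alpha$ and then take direct sums, whereas the paper applies the lemma once to the whole evaluation map $\delta_0\colon B\to\bigoplus_k\M_{p'_k}$ (which is already covered by the ``direct sum of matrix algebras'' case of the lemma), so the block-respecting ``diagonal form'' is built in from the start. That sidesteps the combinatorial matching you flag as ``the hard part'': the permutation $S_0$ in the paper just converts between the globally sorted order of the standard form and the block grouping forced by $\phi_0$, and the compatibility you worry about never needs to be checked separately. Your suggested use of Lemma~\ref{lemma_uniqueness}(i) for the interior estimate is also dispensable: once the eigenvalue functions are uniformly $1/2^{n-2}$-close, the inequalities defining $U_{F_{n-2}}$-closeness on the interior fibers can be verified directly by counting eigenvalues, which is what the paper does. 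Finally, as a bookkeeping remark, chaining $(\Cu(\mathrm{i}_B)\circ\alpha,\Cu(\phi^I))\in U_{F_n}$ with the $U_{F_{n-2}}$-closeness of the perturbed map only yields $U_{F_{n-2}}^2\subseteq U_{F_{n-3}}$, not $U_{F_{n-2}}$ as you (and the lemma statement) assert; the paper's own verification in fact lands at $U_{F_{n-3}}$, so this off-by-one is inherited from the source and is harmless for the way the lemma is ultimately applied.
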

\begin{proof}
Let $B=\mathrm{S}_{m'}[\overline{p'},\overline{q'},r',s']$ and let $\alpha\colon F_n\subset \Cu(A)\to \Cu(B)$ be map satisfying conditions (i) to (vii) of Lemma \ref{app-lif1}. Let us consider the Cuntz semigroup morphisms
\begin{align*}
\Cu(\mathrm{i}_B)\circ\alpha &\colon \Cu(A)\to \Cu(\M_{m'}(\mathrm{C}[0,1])),\\
\Cu(\delta_0)\circ\alpha &\colon\Cu(A)\to \Cu\left(\bigoplus_{i=1}^r\M_{p'_i}\right),\\
\Cu(\delta_1)\circ\alpha &\colon\Cu(A)\to \Cu\left(\bigoplus_{j=1}^s\M_{q'_j}\right),
\end{align*}
where $\mathrm{i}_B\colon B\to \M_{m'}(\mathrm{C}[0,1])$ denotes the inclusion map, and $\delta_0\colon B\to \bigoplus_{i=1}^r\M_{p'_i}$ and $\delta_1\colon B\to \bigoplus_{j=1}^s\M_{q'_j}$ denote the evaluation maps at $0$ and $1$, respectively. By Lemma \ref{app-lif1} there exist $\ast$-homomorphism
\[
\phi\colon A\to \M_{m'}(\mathrm{C}[0,1]),\quad \phi_0\colon A\to \bigoplus_{i=1}^r\M_{p'_i},\quad \phi_1\colon A\to \bigoplus_{j=1}^s\M_{q'_j},
\]
such that $\phi$ has standard form, $\phi_0$ and $\phi_1$ have diagonal form, and
\begin{align}
&(\Cu(\mathrm{i}_B)\circ\alpha,\Cu(\phi))\in U_{F_n}, \label{UUU1}\\
&(\Cu(\delta_0)\circ\alpha,\Cu(\phi_0))\in U_{F_n},\label{UUU2}\\
&(\Cu(\delta_1)\circ\alpha,\Cu(\phi_1))\in U_{F_n}.\label{UUU3}
\end{align}
Let $\mathrm{i}_0\colon \bigoplus_{i=1}^r\M_{p'_i}\to \M_{m'}$ and $\mathrm{i}_1\colon \bigoplus_{j=1}^s\M_{q'_j}\to \M_{m'}$ denote the inclusion maps, and let $\gamma_0, \gamma_1\colon \M_{m'}(\mathrm{C}[0,1])\to \M_{m'}$ denote the evaluation maps at $0$ and $1$, respectively. Note that 
\[
\Cu(\mathrm{i}_0\circ\delta_0)\circ\alpha=\Cu(\gamma_0\circ\mathrm{i}_B)\circ\alpha,\quad \Cu(\mathrm{i}_1\circ\delta_1)\circ\alpha=\Cu(\gamma_1\circ\mathrm{i}_B)\circ\alpha.
\]
Hence, by \eqref{UUU1}, \eqref{UUU2}, and \eqref{UUU3} we have
\begin{align*}
&(\Cu(\mathrm{i}_0\circ\delta_0)\circ\alpha,\Cu(\gamma_0\circ\phi))\in U_{F_n},\quad (\Cu(\mathrm{i}_0\circ\delta_0)\circ\alpha,\Cu(\mathrm{i}_0\circ\phi_0))\in U_{F_n},\\
&(\Cu(\mathrm{i}_1\circ\delta_1)\circ\alpha,\Cu(\gamma_1\circ\phi))\in U_{F_n}, \quad (\Cu(\mathrm{i}_1\circ\delta_1)\circ\alpha,\Cu(\mathrm{i}_1\circ\phi_1))\in U_{F_n}.
\end{align*}
It follows that
\begin{align}\label{CCppUU}
\begin{aligned}
&(\Cu(\gamma_0\circ\phi), \Cu(\mathrm{i}_0\circ\phi_0))\in U_{F_n}^2\subseteq U_{F_{n-1}},\\
&(\Cu(\gamma_1\circ\phi),\Cu(\mathrm{i}_1\circ\phi_1))\in U_{F_n}^2\subseteq U_{F_{n-1}}.
\end{aligned}
\end{align}

Since $\phi_0$ and $\phi_1$ have diagonal form there are permutation matrices $S_0,S_1\in \M_{m'}$ such that $\Ad(S_0)\circ\phi_0$ and $\Ad(S_1)\circ\phi_1$ have standard form. That is,
\begin{align*}
&S_0^*\phi_0(f)S_0=\diag(\Lambda_{\overline{\nu^{\phi_0}}}, \Lambda_{\overline{\omega^{\phi_0}}},f(\lambda_1^{\phi_0}), \cdots, f(\lambda_{\mu^{\phi_0}}^{\phi_0})),\\
&S_1^*\phi_1(f)S_1=\diag(\Lambda_{\overline{\nu^{\phi_1}}}, \Lambda_{\overline{\omega^{\phi_1}}},f(\lambda_1^{\phi_1}), \cdots, f(\lambda_{\mu^{\phi_1}}^{\phi_1})),
\end{align*}
where the tuples $\overline{\nu^{\phi_0}}$, $\overline{\omega^{\phi_0}}$, $\overline{\nu^{\phi_1}}$, and $\overline{\omega^{\phi_1}}$ have a component equal to zero. Also, since $\phi$ has standard form
\begin{align}\label{phi_matrix}
\phi(f)=\diag(\Lambda_{\overline{\nu^{\phi}}}, \Lambda_{\overline{\omega^{\phi}}},f(\lambda_1^{\phi}), \cdots, f(\lambda_{\mu^{\phi}}^{\phi})).
\end{align}
It follows now by \eqref{CCppUU}, as in the proof of Theorem \ref{Uniqueness}, that 
\begin{align*}
\mu^\phi=\mu^{\phi_0}=\mu^{\phi_1},\quad
\overline{\nu^\phi}=\overline{\nu^{\phi_0}}=\overline{\nu^{\phi_1}},\quad\overline{\omega^\phi}=\overline{\omega^{\phi_0}}
=\overline{\omega^{\phi_1}},
\end{align*}
\begin{align}\label{llambda}
|\lambda_i^\phi(0)-\lambda_i^{\phi_0}|\le 1/2^{n-2},\quad |\lambda_i^\phi(1)-\lambda_i^{\phi_1}|\le 1/2^{n-2},
\end{align}
for $i=1,2,\cdots,\mu$.

By Equation \eqref{llambda} for each $i=1,2,\cdots,\mu$ we can choose a continuous function $\lambda'_i\colon [0,1]\to [0,1]$ such that $\lambda_i'(0)=\lambda_i^{\phi_0}$, $\lambda_i'(1)=\lambda_i^{\phi_1}$, and 
\begin{align}\label{sup_lambda}
\sup_{t\in [0,1]}|\lambda_i^\phi(t)-\lambda_i'(t)|\le 1/2^{n-2}.
\end{align}
Let $U\colon [0,1]\to M_{m'}$ be a continuous path of unitaries such that $U(0)=S_0$ and $U(1)=S_1$. Let us show that the $\ast$-homomorphism $\phi'\colon A\to B$ defined by
\begin{align}\label{phi_prime}
\phi'(f)=U\diag(\Lambda_{\overline{\nu}}, \Lambda_{\overline{\omega}},f\circ\lambda'_1, \cdots, f\circ\lambda'_{\mu})U^*
\end{align} 
satisfies the conditions of the lemma. It sends elements of $A$ to $B$ since by construction 
\begin{align}\label{phiphi_prime}
 \phi'(f)(0)=\phi_0(f),\quad \phi'(f)(1)=\phi_1(f),
\end{align}
for all $f\in A$. 
It is left to show that $(\alpha, \phi')\in U_{n-3}$. In other words we need to show that
\begin{align*}
&\alpha(x_{i,j})=\Cu(\phi')(x_{i,j}),\\
&\alpha(y_{(k+1)/2^{n-3}})\le \Cu(y_{k/2^{n-3}}),\\
&\Cu(\phi')(y_{(k+1)/2^{n-3}})\le \alpha(y_{k/2^{n-3}}),
\end{align*}
for every $1\le i\le r$, $1\le j\le s$, and $0\le k\le 2^{n-3}-1$.

By equations \eqref{UUU1}, \eqref{UUU2}, and \eqref{UUU3} we have
\begin{align*}
\alpha(x_{i,j})=
\begin{cases}
\Cu(\phi)(x_{i,j})(t)&\text{if }t\in (0,1),\\
\Cu(\phi_0)(x_{i,j})(t)&\text{if }t\in \{0_1^B, 0_2^B, \cdots, 0_{r'}^B\},\\
\Cu(\phi_1)(x_{i,j})(t)&\text{if }t\in \{1_1^B, 1_2^B, \cdots, 1_{s'}^B\}.
\end{cases}
\end{align*}
Hence, by Lemma \ref{Cuphi} and the construction of $\phi'$ it follows that
\begin{align}\label{alphapp}
\alpha(x_{i,j})=\Cu(\phi')(x_{i,j}),
\end{align}
for every $1\le i\le r$ and $1\le j\le s$. 

For $t\in (0,1)$ and $1\le k\le 2^{n-3}-1$ we have
\begin{align*}
\alpha(y_{(k+1)/2^{n-3}})(t)&=\alpha(y_{(2k+2)/2^{n-2}})(t)\\
&\stackrel{\eqref{UUU1}}{\le} \Cu(\phi)(y_{(2k+1)/2^{n-2}})(t)\\
&\stackrel{\eqref{phi_matrix}}{=}|\{i : \lambda_i^\phi(t)>(2k+1)/2^{n-2}\}|+w_1^\phi\\
&\stackrel{\eqref{sup_lambda}}{\le} |\{i : \lambda'_i(t)>2k/2^{n-2}\}|+w_1^\phi\\
&\stackrel{\eqref{phi_prime}}{=}\Cu(\phi')(y_{k/2^{n-3}})(t),\\
\Cu(\phi')(y_{(k+1)/2^{n-3}})(t)&=\Cu(\phi')(y_{(2k+2)/2^{n-2}})(t)\\
&\stackrel{\eqref{phi_prime}}{=}|\{i : \lambda'_i(t)>(2k+2)/2^{n-2}\}|+w_1^\phi\\
&\stackrel{\eqref{sup_lambda}}{\le} |\{i : \lambda_i^\phi(t)>(2k+1)/2^{n-2}\}|+w_1^\phi\\
&\stackrel{\eqref{phi_matrix}}{\le} \Cu(\phi)(y_{(2k+1)/2^{n-2}})(t)\\
&\stackrel{\eqref{UUU1}}{\le} \alpha(y_{2k/2^{n-2}})(t)\\
&=\alpha(y_{k/2^{n-3}})(t).
\end{align*}
For $t\in \{0_1^B, 0_2^B, \cdots, 0_{r'}^B\}$ and $1\le k\le 2^{n-3}-1$ we have
\begin{align*}
\alpha(y_{(k+1)/2^{n-3}})(t)&=\alpha(y_{(2k+2)/2^{n-2}})(t)\\
&\stackrel{\eqref{UUU2}}{\le} \Cu(\phi_0)(y_{(2k+1)/2^{n-2}})(t)\\
&\stackrel{\eqref{phiphi_prime}}{=}\Cu(\phi')(y_{(2k+1)/2^{n-2}})(t)\\
&\le \Cu(\phi')(y_{k)/2^{n-3}})(t),\\
\Cu(\phi')(y_{(k+1)/2^{n-3}})(t)&\stackrel{\eqref{phiphi_prime}}{=}\Cu(\phi_0)(y_{(k+1)/2^{n-3}})(t)\\
&=\Cu(\phi_0)(y_{(2k+2)/2^{n-2}})(t)\\
&\stackrel{\eqref{UUU2}}{\le} \alpha(y_{(2k+1)/2^{n-2}})(t)\\
&\le \alpha(y_{k/2^{n-3}})(t).
\end{align*}
Similarly, for $t\in \{1_1^B, 1_2^B, \cdots, 1_{s'}^B\}$ and $1\le k\le 2^{n-3}-1$ 
\begin{align*}
\alpha(y_{(k+1)/2^{n-3}})(t)\le \Cu(\phi')(y_{k/2^{n-3}})(t),\quad \Cu(\phi')(y_{(k+1)/2^{n-3}})(t)\le \alpha(y_{k/2^{n-3}})(t).
\end{align*}
We have shown that,
\[
\alpha(y_{(k+1)/2^{n-3}})\le \Cu(\phi')(y_{k/2^{n-3}}),\quad \Cu(\phi')(y_{(k+1)/2^{n-3}})\le \alpha(y_{k/2^{n-3}}).
\]
This together with \eqref{alphapp} implies that
\[
(\Cu(\phi),\alpha)\in U_{F_{n-3}}.
\]
\end{proof}

\begin{proof}[Proof of Theorem \ref{Existence}]
Let $A=\mathrm{S}_m[\overline{p}, \overline{q},r,s]$ be a splitting interval algebra, and let $B=\varinjlim (B_i, \phi_{i,j})$ be an inductive limit of finite direct sums of splitting interval algebras. First let us show that the Theorem holds in the case that the $\ast$-homomorphisms $\phi_{i,j}\colon B_i\to B_j$ are unital and that $\alpha[1_A]=[1_B]$. 

Let $F$ be a finite subset of $\Cu(A)$. By Proposition \ref{basis} there is $N\ge 1$ such that $U_{F_{N-3}}\subseteq U_{F}$. Hence, it is enough to show that the theorem holds in the case that $F=F_{N-3}$. Let $x_{i,j}$, $1\le i\le r$, $1\le j\le s$, and $y_{l/2^N}$, $1\le l\le 2^N-1$, be the elements of $F_N$. We have
\begin{align}\label{xy1}
& \alpha(x_{i,j})+\alpha(x_{i',j'})=\alpha(x_{i,j'})+\alpha(x_{i',j}),\quad \alpha(x_{i,j})\le \alpha[1_A]=[1_B],\\
&\sum_{i=1}^rp_i\alpha(x_{i,j'})+\sum_{j=1}^sq_j\alpha(x_{i',j})=\alpha[1_A]+(2m-1)\alpha(x_{i',j'}),\label{xy2}\\
&\alpha(x_{i,1})\le \alpha(y_{l/2^N})+\alpha(x_{i,j}), \quad \alpha(y_{(l+1)/2^N})\ll \alpha(y_{l/2^N}),\quad \alpha(y_0)\le \alpha(x_{i,1}),\label{xy3}
\end{align}
for every $1\le i,i'\le r$, $1\le j,j'\le s$, and $1\le l\le 2^N-1$.

By Lemma \ref{compact_elements} there exist $k\ge 1$ and compact elements $z_{i,j}\in \Cu(B_k)$ such that $\Cu(\phi_{k,\infty})(z_{i,j})=\alpha(x_{i,j})$. Hence, by \eqref{xy1} and \eqref{xy2} we have
\begin{align*}
&\Cu(\phi_{k,\infty})(z_{i,j}+z_{i',j'})=\Cu(\phi_{k,\infty})(z_{i,j'}+z_{i',j}),\quad \Cu(\phi_{k,\infty})(z_{i,j})\le \Cu(\phi_{k,\infty})[1_{B_k}],\\
&\Cu(\phi_{k,\infty})\left(\sum_{i=1}^rp_iz_{i,j'}+\sum_{j=1}^sq_jz_{i',j}\right)=\Cu(\phi_{k,\infty})([1_{B_k}]+(2m-1)z_{i',j'}).
\end{align*}
By (i) and (ii) of Lemma \ref{compact_elements} there exists $n\ge k$ such that
\begin{align*}
&\Cu(\phi_{k,n})(z_{i,j}+z_{i',j'})=\Cu(\phi_{k,n})(z_{i,j'}+z_{i',j}),\quad \Cu(\phi_{k,n})(z_{i,j})\le \Cu(\phi_{k,n})[1_{B_k}],
\end{align*}
and
\begin{align*}
\Cu(\phi_{k,n})\left(\sum_{i=1}^rp_iz_{i,j'}+\sum_{j=1}^sq_jz_{i',j}\right)&=\Cu(\phi_{k,n})([1_{B_k}]+(2m-1)z_{i',j'})\\
&=[1_{B_n}]+\Cu(\phi_{k,n})((2m-1)z_{i',j'}),
\end{align*}
for every $1\le i,i'\le r$ and $1\le j,j'\le s$. Hence, by replacing the elements $z_{i,j}$ by $\Cu(\phi_{k,n})(z_{i,j})$ we may assume that 
\begin{align}\label{zetas}
\begin{aligned}
&z_{i,j}+z_{i',j'}=z_{i,j'}+z_{i',j},\quad z_{i,j}\le [1_{B_k}],\\
&\sum_{i=1}^rp_iz_{i,j'}+\sum_{j=1}^sq_jz_{i',j}=[1_{B_k}]+(2m-1)z_{i',j'}.
\end{aligned}
\end{align}

By (i) of Proposition \ref{inductive_limits} for each $1\le l\le 2^N-1$ there are elements $t_{l/2^N}^{(n)}\in \Cu(B_n)$, $n=1,2,\cdots$, such that 
\[
\Cu(\phi_{n, n+1})\left(t_{l/2^N}^{(n)}\right)\ll t_{l/2^N}^{(n)},\quad \sup_n\Cu(\phi_{n,\infty})\left(t_{l/2^N}^{(n)}\right)=\alpha(y_{l/2^N}).
\]
Hence, by \eqref{xy3} and the definition of the relation $\ll$ there exists $n\ge 1$ such that
\begin{align}\label{CaC}
\begin{aligned}
&\Cu(\phi_{k,\infty})(z_{i,1})\le \Cu(\phi_{n,\infty})\left(t_{l/2^N}^{(n)}\right)+\Cu(\phi_{k,\infty})(z_{i,j}),\\
&\alpha(y_{(l+1)/2^N})\le \Cu(\phi_{n,\infty})\left(t_{l/2^N}^{(n)}\right)\le \alpha(y_{l/2^N}),\\
&\Cu(\phi_{n+1,\infty})\left(t_{0}^{(n+1)}\right)\le \Cu(\phi_{k,\infty})(z_{i,1}),
\end{aligned}
\end{align} 
for every $1\le i\le r$, $1\le j\le s$, and $1\le l\le 2^N-1$. Moreover, $n$ may be taken such that 
\begin{align}\label{cpcpt}
\Cu(\phi_{n+1,\infty})\left(t^{(n+1)}_{(l+1)/2^N}\right)\le \Cu(\phi_{n,\infty})\left(t^{(n)}_{l/2^N}\right),
\end{align}
for every $1\le l\le 2^N-1$. Since 
\[
z_{i,1}\ll z_{i,1},\quad \Cu(\phi_{n,n+1})\left(t_0^{(n)}\right)\ll t_0^{(n+1)},\quad \Cu(\phi_{n,n+1})\left(t^{(n)}_{(l+1)/2^N}\right)\ll t^{(n+1)}_{(l+1)/2^N},
\]
then by (ii) of Lemma \ref{inductive_limits} applied to the first and third inequality of \eqref{CaC} and to the inequality \eqref{cpcpt} there exists $k'$ such that 
\begin{align}\label{zetas_tes}
\begin{aligned}
&\Cu(\phi_{k,k'})(z_{i,1})\le \Cu(\phi_{n,k'})\left(t_{l/2^N}^{(n)}\right)+\Cu(\phi_{k,k'})(z_{i,j})\\
&\Cu(\phi_{n,k'})\left(t_{0}^{(n)}\right)\le \Cu(\phi_{k,k'})(z_{i,1}),\\
&\Cu(\phi_{n,k'})\left(t^{(n)}_{(l+1)/2^N}\right)\le \Cu(\phi_{n,k'})\left(t^{(n)}_{l/2^N}\right)
\end{aligned}
\end{align} 
for every $1\le i\le r$, $1\le j\le s$, and $1\le l\le 2^N-1$. Set $\Cu(\phi_{k,k'})(z_{i,1})=\tilde{z}_{i,j}$ and $\Cu(\phi_{n,k'})\left(t^{(n)}_{l/2^N}\right)=\tilde{t}_{l/2^N}$. Then, by \eqref{zetas}, the second inequality of \eqref{CaC}, and \eqref{zetas_tes} we have
\begin{align*}
&\tilde{z}_{i,j}+\tilde{z}_{i',j'}=\tilde{z}_{i,j'}+\tilde{z}_{i',j},\quad \tilde{z}_{i,j}\le [1_{B_k}],\\
&\sum_{i=1}^rp_i\tilde{z}_{i,j'}+\sum_{j=1}^sq_j\tilde{z}_{i',j}=[1_{B_k}]+(2m-1)\tilde{z}_{i',j'},\\
&\tilde{z}_{i,1}\le \tilde{t}_{l/2^N}+\tilde{z}_{i,j},\quad \tilde{t}_{(l+1)/2^N}\le \tilde{t}_{l/2^N}\le \tilde{z}_{i,1},
\end{align*}
for every $1\le i,i'\le r$, $1\le j, j'\le s$, and $1\le l\le 2^N-1$. Let $\tilde\alpha\colon F_N\to \Cu(B_{k'})$ be the map defined by $\alpha(x_{i,j})=\tilde{z}_{i,j}$ and $\alpha(y_{l/2^N})=\tilde{t}_{l/2^N}$. Then, it follows by the preceding equations that $\tilde\alpha$ satisfies conditions (i) to (vii) of Lemma \ref{app-lif1}. Hence, by Lemma \ref{app-lif2} there exists a $\ast$-homomorphism $\psi\colon A\to B_{k'}$ such that $(\Cu(\psi), \tilde\alpha)\in U_{F_{N-2}}$. It follows that
\begin{align}\label{CPU}
(\Cu(\phi_{k',\infty}\circ\psi), \Cu(\phi_{k',\infty})\circ\tilde\alpha)\in U_{F_{N-2}}.
\end{align}
By the second inequality of \eqref{CaC} we have
\begin{align*}
\alpha(y_{(l+1)/2^N})\le \Cu(\phi_{k',\infty})\circ\tilde{\alpha}(y_{l/2^N})\le\alpha(y_{l/2^N}),
\end{align*}
for every $1\le l\le 2^{N}-1$. This implies by the definition of the entourage $U_{F_N}$ that
\begin{align*}
(\alpha, \Cu(\phi_{k',\infty})\circ\tilde{\alpha})\in U_{F_N}\subseteq U_{F_{N-2}}.
\end{align*}
This together with \eqref{CPU} imply that
\[
(\alpha, \Cu(\phi_{k',\infty}\circ\psi))\in U_{F_{N-2}}^2\subseteq U_{F_{N-3}}.
\]
The theorem follows by taking $\phi=\phi_{k',\infty}\circ\psi$.

Let us consider the general case. Let $F\subset \Cu(A)$ be a finite subset, and let $\alpha\colon \Cu(A)\to \Cu(B)$ be such that $\alpha[1_A]\le [s_B]$. Since $\alpha[1_A]$ is a compact element of $\Cu(B)$ and $B$ has stable rank one, there is a projection $P\in B$ such that $\alpha[1_A]=[P]$. Since projections lift there exist $i\ge 1$ and a projection $Q\in B_i$ such that $\phi_{i,\infty}(Q)=P$. The C*-algebras $B_i'=\phi_{i,j}(Q)B_j\phi_{i,j}(Q)$ are isomorphic to direct sum of splitting interval algebras. In addition, we have
\[
PBP=\varinjlim (B_i',\phi_{i,j}|_{B_i'}).
\]
Note that the $\ast$-homomorphisms $\phi_{i,j}|_{B_i'}\colon B_i'\to B_j'$ are unital. Since $\alpha[1_A]=[P]$ the image of the morphism $\alpha$ is contained in $\Cu(PBP)$ which is a subsemigroup of $\Cu(B)$. Hence, applying the theorem in the case that the codomain algebra is an inductive limit of direct sums of splitting interval algebras with unital $\ast$-homomorphisms there exist a $\ast$-homomorphism $\phi\colon A\to PBP\subset B$ such that $(\alpha, \Cu(\phi))\in U_{F}$. This concludes the proof of the theorem. 
\end{proof}
\section{Proof of Theorem \ref{homomorphism}}

\begin{proof}[Proof of Theorem \ref{homomorphism}]
By (iv) of Proposition 5 of \cite{Ciuperca-Elliott-Santiago} we may assume that $A$ is a finite direct sum of spitting interval algebras. Furthermore, by the proof of (iii) of Proposition 5 of \cite{Ciuperca-Elliott-Santiago} we may assume that $A$ is a splitting interval algebra. This follows since for every projection $p\in B$ the hereditary subalgebra $pBp$ can be written as a sequential inductive limit of finite direct sums of splitting interval algebras.

Let $A=\mathrm{S}[\overline{p}, \overline{q},r,s]$ be a splitting interval algebra and let $B$ be an inductive limit of finite direct sums of splitting interval algebras. Since $A$ is separable we can choose finite sets $G_i\subset A$, $i=1,2,\cdots$, such that $A=\overline{\bigcup_{i=1}^\infty G_i}$,  and $G_i\subset G_{i+1}$, for $i\ge 1$. By Theorem \ref{Uniqueness} for each $i\ge 1$ there exist a finite subset $F\in \Cu(A)$ such that: if $\phi,\psi\colon A\to B$ are $\ast$-homomorphisms such that $(\Cu(\phi),\Cu(\psi))\in U_F$, then there is a unitary $u$ in the unitization of $B$ such that
\begin{align}\label{phiupsiu}
\|\phi(a)-u^*\psi(a)u\|<\frac{1}{2^i},
\end{align}
for all $a\in G_i$. Since the entourages $U_{F_i}$, $i=1,2,\cdots$, are a basis for the uniform structure $\mathcal{U}_{A,B}$ (see Subsection \ref{Uniform-structure}) there exists $k_i\ge 1$ such that $U_{k_i}\in U_{F}$. Therefore, if $(\phi, \psi)\in U_{F_{k_i}}$ then \eqref{phiupsiu} holds.

Let $\alpha\colon \Cu(A)\to \Cu(B)$ be a Cuntz semigroup morphism such that $\alpha[1_A]\le [s_B]$, where $s_B$ is an strictly positive element of $B$. By Theorem \ref{Existence} for each $n\ge 1$ there exist a $\ast$-homomorphisms $\phi_n\colon A\to B$ such that $(\alpha, \Cu(\phi_n))\in U_{F_n}$. This implies by \eqref{uniform_1} that
\[
(\Cu(\phi_m),\Cu(\phi_n))\in U_{F_{N+1}}^2\subseteq U_{F_N}
\]
for every $m,n>N$. Thus, we can choose a sequence of natural numbers $n_i$, $i=1,2,\cdots$, such that $(\Cu(\phi_{n_i}), \Cu(\phi_{n_{i+1}}))\in U_{k_i}$ for all $i\ge 1$. By the choice of numbers $k_i$, $i=1,2,\cdots$, there are unitaries $u_i\in \widetilde{B}$ such that
\begin{align}\label{phiuiphiui}
\|\phi_{n_i}(a)-u_i^*\phi_{n_{i+1}}(a)u_i\|<\frac{1}{2^i},
\end{align}
for every $a\in G_i$, and $i\ge 1$. Set
\[
\Ad(u_{i-1}\cdots u_2u_1)\circ\phi_{n_i}=\psi_i.
\]
Then, by \eqref{phiuiphiui} 
\[
\|\psi_i(a)-\psi_{i+1}(a)\|<\frac{1}{2^i},
\]
for every $a\in G_i$, and $i\ge 1$. It follows that
the sequence $\psi_i(f)$, $i=1,2\cdots$, is Cauchy for every $f\in \bigcup_{i=1}^\infty G_i$. Therefore, the limit 
\begin{align}\label{plimp}
\phi(f)=\lim_{i\to \infty} \psi_i(f),
\end{align}
exists for every $f\in \bigcup_{i=1}^\infty G_i$. Since the set $ \bigcup_{i=1}^\infty G_i$ is dense in $A$ and the map $\phi\colon \bigcup_{i=1}^\infty G_i\to B$ is the pointwise limit of $\ast$-homomorphisms, it extends to a $\ast$-homomorphism $\phi\colon A\to B$. Let us show that $\Cu(\phi)=\alpha$.

Let $G$ be the finite subset of $A$ defined in \eqref{G}. Without loss of generality we may assume that $G$ is contained in $\bigcup_{i=1}^\infty G_i$. By \eqref{plimp} for each $n\ge 1$ there exists $k\ge n$ such that
\begin{align}\label{pppp_1}
\|\phi(f)-\psi_k(f)\|<\frac{1}{2^n},
\end{align}
for every $f\in G$. Hence, by (i) of Lemma \ref{lemma_uniqueness} we have $(\Cu(\phi), \Cu(\psi_k))\in U_{F_n}$. It follows that $(\Cu(\phi), \Cu(\phi_k))\in U_{F_n}$ since the $\ast$-homomorphisms $\phi_k$ and $\psi_k$ are unitarily equivalent. We have $(\Cu(\phi_k),\alpha)\in U_{F_k}\subseteq U_{F_n}$. Therefore,
\[
(\Cu(\phi), \alpha)\in U^2_{F_n}\subseteq U_{F_{n-1}}.
\]
Since $n$ is arbitrary and the entourages $U_{F_n}$, $n=1,2,\cdots$, are a basis for the uniform structure on the set of Cuntz semigroup morphisms from $\Cu(A)$ to $\Cu(B)$ we conclude that $\alpha=\Cu(\phi)$. 
 
\end{proof}

The proof of Corollary \ref{isomorphism} is the same as the proof of Corollary 1 of \cite{Ciuperca-Elliott-Santiago}.
\begin{bibdiv}
\begin{biblist}

\bib{Bourbaki}{book}{
   author={Bourbaki, N.},
   title={General topology. Chapters 5--10},
   series={Elements of Mathematics (Berlin)},
   note={Translated from the French;
   Reprint of the 1966 edition},
   publisher={Springer-Verlag},
   place={Berlin},
   date={1989},
}

\bib{Coward-Elliott-Ivanescu}{article}{
   author={Coward, K. T.},
   author={Elliott, G. A.},
   author={Ivanescu, C.},
   title={The Cuntz semigroup as an invariant for $C\sp *$-algebras},
   journal={J. Reine Angew. Math.},
   volume={623},
   date={2008},
   pages={161--193},
}

\bib{Ciuperca-Elliott}{article}{
   author={Ciuperca, A.},
   author={Elliott, G. A.},
   title={A remark on invariants for C*-algebras of stable rank one},
   journal={Int. Math. Res. Not. IMRN},
   date={2008},
   pages={Art. ID rnm158, 33},
}

\bib{Ciuperca-Elliott-Santiago}{article}{
   author={Ciuperca, A.},
   author={Elliott, G. A.},
   author={Santiago, L.},
   title={On inductive limits of type I C*-algebras with one-dimensional spectrum},
   journal={Int. Math. Res. Not. IMRN},
   status={to appear},
}

\bib{Elliott-cancellation}{article}{
   author={Elliott, G. A.},
   title={Hilbert modules over a $C\sp *$-algebra of stable rank one},
   journal={C. R. Math. Acad. Sci. Soc. R. Can.},
   volume={29},
   date={2007},
   number={2},
   pages={48--51},
}

\bib{Jian-Su-Drop}{article}{
   author={Jiang, X.},
   author={Su, H.},
   title={On a simple unital projectionless $C\sp *$-algebra},
   journal={Amer. J. Math.},
   volume={121},
   date={1999},
   number={2},
   pages={359--413},
}

\bib{Jian-Su-Spli}{article}{
   author={Jiang, X.},
   author={Su, H.},
   title={A classification of simple limits of splitting interval algebras},
   journal={J. Funct. Anal.},
   volume={151},
   date={1997},
   number={1},
   pages={50--76},
}

\bib{Kirchberg-Rordam}{article}{
   author={Kirchberg, E.},
   author={R{\o}rdam, M.},
   title={Infinite non-simple $C\sp *$-algebras: absorbing the Cuntz
   algebras $\scr O\sb \infty$},
   journal={Adv. Math.},
   volume={167},
   date={2002},
   number={2},
   pages={195--264},
}

\bib{LoringBook}{book}{
   author={Loring, T. A.},
   title={Lifting solutions to perturbing problems in C*-algebras},
   series={Fields Institute Monographs, Vol. 8},
   publisher={American Mathematical Society},
   place={Providence, RI},
   date={1997},
}

\bib{Perera-Toms}{article}{
   author={Perera, F.},
   author={Toms, A. S.},
   title={Recasting the Elliott conjecture},
   journal={Math. Ann.},
   volume={338},
   date={2007},
   number={3},
   pages={669--702},
}

\bib{Robert}{article}{
   author={Robert, L.},
   title={The Cuntz semigroup of some spaces of dimension at most 2},
   journal={C. R. Math. Acad. Sci. Soc. R. Can.},
   status={to appear},

}

\bib{Robert-Santiago}{article}{
   author={Robert, L.},
   author={Santiago, L.},
   title={On the classification of $C\sp*$-homomorphisms from $\mathrm{C}_0(0,1]$ to a $C\sp *$-algebra
of stable rank greater than 1},
   journal={ J. Funct. Anal.},
   status={to appear},
}

\bib{Robert-Tikuisis}{article}{
   author={Robert, L.},
   author={Tikuisis, A.},
   title={Hilbert C*-modules over a commutative C*-algebra},
   journal={ J. Funct. Anal.},
   status={to appear},
}

\bib{Rordam-Winter}{article}{
   author={R{\o}rdam, M.},
   author={Winter, W.},
   title={The Jiang-Su algebra revisited},
   journal={ J. Reine Angew. Math.},
   status={to appear},
}

\bib{Su}{article}{
   author={Su, H.},
   title={On the classification of $C\sp *$-algebras of real rank zero:
   inductive limits of matrix algebras over non-Hausdorff graphs},
   journal={Mem. Amer. Math. Soc.},
   volume={114},
   date={1995},
   number={547},
   pages={viii+83},
}

\bib{Thomsen}{article}{
   author={Thomsen, K.},
   title={Homomorphisms between finite direct sums of circle algebras},
   journal={Linear and Multilinear Algebra},
   volume={32},
   date={1992},
   number={1},
   pages={33--50},
}

\end{biblist}
\end{bibdiv}

\end{document}